\begin{document}

\newtheorem{theorem}{Theorem}[section]
\newtheorem{prop}[theorem]{Proposition}
\newtheorem{lemma}[theorem]{Lemma}
\newtheorem{cor}[theorem]{Corollary}
\newtheorem{prob}[theorem]{Problem}
\newtheorem{defn}[theorem]{Definition}
\newtheorem{notation}[theorem]{Notation}
\newtheorem{fact}[theorem]{Fact}
\newtheorem{conj}[theorem]{Conjecture}
\newtheorem{claim}[theorem]{Claim}
\newtheorem{example}[theorem]{Example}
\newtheorem{rem}[theorem]{Remark}
\newtheorem{assumption}[theorem]{Assumption}
\newtheorem{scholium}[theorem]{Scholium}
\newtheorem{qn}[theorem]{Question}
\newtheorem{conv}[theorem]{Notation and Convention}

\newcommand{\HAT}{\widehat}
\newcommand{\map}{\rightarrow}
\newcommand{\C}{\mathcal C}
\newcommand\AAA{{\mathcal A}}
\def\AA{\mathcal A}

\def\L{{\mathcal L}}
\def\al{\alpha}
\def\A{{\mathcal A}}

\newcommand\GB{{\mathbb G}}
\newcommand\BB{{\mathcal B}}
\newcommand\DD{{\mathcal D}}
\newcommand\EE{{\mathcal E}}
\newcommand\FF{{\mathcal F}}
\newcommand\GG{{\mathcal G}}
\newcommand\HH{{\mathbb H}}
\newcommand\II{{\mathcal I}}
\newcommand\JJ{{\mathcal J}}
\newcommand\KK{{\mathcal K}}
\newcommand\LL{{\mathcal L}}
\newcommand\MM{{\mathcal M}}
\newcommand\NN{{\mathbb N}}
\newcommand\OO{{\mathcal O}}
\newcommand\PP{{\mathcal P}}
\newcommand\QQ{{\mathbb Q}}
\newcommand\RR{{\mathbb R}}
\newcommand\SSS{{\mathcal S}}
\newcommand\TT{{\mathcal T}}
\newcommand\UU{{\mathcal U}}
\newcommand\VV{{\mathcal V}}
\newcommand\WW{{\mathcal W}}
\newcommand\XX{{\mathcal X}}
\newcommand\YY{{\mathcal Y}}
\newcommand\YB{{\mathbb Y}}
\newcommand\ZZ{{\mathcal Z}}
\newcommand\ZI{{\mathbb Z}}
\newcommand\hhat{\widehat}
\newcommand\flaring{{Corollary \ref{cor:super-weak flaring} }}
\newcommand\pb{\bar{p}_B}
\newcommand\pp{\bar{p}_{B_1}}
\newcommand{\eg}{\overline{EG}}
\newcommand{\eh}{\overline{EH}}
\def\Ga{\Gamma}
\def\Z{\mathbb Z}

\def\diam{\operatorname{diam}}
\def\dist{\operatorname{dist}}
\def\hull{\operatorname{Hull}}
\def\id{\operatorname{id}}
\def\Im{\operatorname{Im}}

\def\barycenter{\operatorname{center}}

\def\length{\operatorname{length}}
\newcommand\RED{\textcolor{red}}
\newcommand\BLUE{\textcolor{blue}}
\newcommand\GREEN{\textcolor{green}}
\def\mini{\scriptsize}

\def\acts{\curvearrowright}
\def\embed{\hookrightarrow}

\def\ga{\gamma}
\newcommand\la{\lambda}
\newcommand\eps{\epsilon}
\def\geo{\partial_{\infty}}
\def\bhb{\bigskip\hrule\bigskip}

\title[Coned-off spaces and CT maps]{On geometry of coned-off spaces and Cannon-Thurston maps}

\author{Pranab Sardar}
\address{Department of Mathematical Sciences,
	Indian Institute of Science Education and Research Mohali,
	Knowledge City, Sector 81, SAS Nagar,
	Punjab 140306,  India}

\email{psardar@iisermohali.ac.in}

\author{Ravi Tomar}
\email{ravitomar547@gmail.com}
\address{Department of Mathematical Sciences,
	Indian Institute of Science Education and Research Mohali,
	Knowledge City, Sector 81, SAS Nagar,
	Punjab 140306,  India}


\subjclass[2010]{20F65, 20F67 (Primary), 30F60(Secondary) }

\keywords{Complexes of groups, Cannon-Thurston map, hyperbolic groups, acylindrical action}

\date{\today}

\begin{abstract} 
A typical question addressed in this paper is the following. Suppose $Z\subset Y\subset X$ are
hyperbolic spaces where $Z$ is quasiconvex in both $Y$ and $X$. Let $\HAT{Y}$ and $\HAT{X}$
denote the spaces obtained from $Y$ and $X$ respectively by coning off $Z$ as defined
by Farb (\cite{farb-relhyp}). {\em If the inclusion of the coned-off spaces $\HAT{Y}\map \HAT{X}$ 
admits the Cannon-Thurston (CT) map then does the inclusion $Y\map X$ also admit the Cannon-Thurston map?}
The main result of this paper answers this question affirmatively provided 
$\HAT{Y}\map \HAT{X}$ satisfies Mitra's criterion (see Lemma \ref{mitra's criterion}) 
for the existence of CT maps, although the answer in general is negative.

The main application of our theorem is in the context of acylindrical complexes of hyperbolic groups. 
In \cite{martin1} A. Martin proved a combination theorem for developable, acylindrical complexes of 
hyperbolic groups. Suppose $(\mathcal G, \YY)$ is an acylindrical complex of hyperbolic groups with 
universal cover $B$ which satisfy the hypotheses of Martin's theorem. Suppose $\YY_1\subset \YY$ is a 
connected subcomplex such that the subcomplex of groups $(\mathcal G, \YY_1)$ also satisfies the hypotheses 
of Martin's theorem, it has universal cover $B_1$ and the natural homomorphism 
$\pi_1(\mathcal G, \YY_1)\map \pi_1(\mathcal G, \YY)$ is injective. It follows from the main theorem of 
this paper that the inclusion $\pi_1(\mathcal G, \YY_1)\map \pi_1(\mathcal G, \YY)$ admits 
the CT map if the inclusion $B_1\rightarrow B$ satisfies Mitra's criterion. Also
$\pi_1(\mathcal G, \YY_1)$ is quasiconvex in $\pi_1(\mathcal G, \YY)$ if in addition $B_1$ is qi embedded in $B$.
\end{abstract}
\maketitle


\section{Introduction} 
Suppose $X$ is a $\delta$-hyperbolic metric space and $\{A_i\}$ is a collection of $k$-quasiconvex subsets of $X$.
In \cite{mj-dahmani} Dahmani and Mj proved the following result.

\smallskip
{\em {\bf Proposition.} (1) \cite[Proposition 2.10]{mj-dahmani}
The coned-off space $\HAT{X}$ obtained from $X$ by coning off the sets $A_i$'s is hyperbolic.

(2) \cite[Proposition 2.11]{mj-dahmani} If $A\subset X$ is quasiconvex then $A$ is quasiconvex in $\HAT{X}$ as well.
}

\smallskip
They also proved a partial converse (see \cite[Proposition 2.12]{mj-dahmani}) to (2).
However, these results motivated us to ask the following.

\smallskip
\begin{qn}\label{intro qn 1}
Suppose $X$ is a hyperbolic metric space and $\{B_i\}$ is a collection of uniformly quasiconvex 
subsets of $X$ and $Y\subset X$ which is hyperbolic and properly embedded in $X$ with the induced 
length metric from $X$. 
Suppose $\{A_j\}$ is a collection of subsets of $Y$ which are uniformly quasiconvex in both $X$ 
and $Y$. Suppose each $A_j$ is contained in $B_i\cap Y$ for some $B_i$. Then by coning off the various
$B_i$'s and $A_j$'s we have two hyperbolic spaces $\HAT{Y}, \HAT{X}$ by the above proposition. 
Let $\iota:Y\map X$ and $\HAT{\iota}:\HAT{Y}\map \HAT{X}$ denote the inclusion maps.

(1) Suppose the inclusion $\HAT{\iota}: \HAT{Y}\map \HAT{X}$ admits the Cannon-Thurston (CT) map. Does the 
inclusion $\iota: Y\map X$ also admit the CT map?

(2) Suppose the inclusion $\iota: Y\map X$ admits the CT map. Does the inclusion $\HAT{\iota}: \HAT{Y}\map \HAT{X}$ 
admit the CT map?
\end{qn}

We recall that the notion of Cannon-Thurston maps or CT maps was
inspired from the work of Cannon and Thurston (\cite{CTpub}) on hyperbolic $3$-manifolds. 
It was introduced in Geometric Group Theory by Mahan Mitra (Mj) in \cite{mitra-trees} and \cite{mitra-ct}. We shall use the
following (equivalent) variant of it in this paper:

{\em Given a map $f:Y\map X$ between two (Gromov) hyperbolic spaces, the CT map for $f$ is
a} continuous extension {\em of $f$ to the Gromov boundaries $\partial f: \partial X\map \partial Y$.}

Since in this context even $f$ need not be continuous, we have emphasized the phrase `continuous extension' which 
would require an explanation. One is refered to Section \ref{section: CT maps} for a somewhat detailed discussion. 
In particular, if $H<G$ are hyperbolic groups, one 
asks if the inclusion $H\map G$ admits the CT map $\partial H\map \partial G$. This question of Mahan Mitra (Mj) 
appeared in \cite[Question 1.19]{bestvinahp}
which was subsequently answered in the negative by Baker and Riley (\cite{baker-riley}).
The CT maps in Geometric Group Theory became popular mostly through the work of 
Mahan Mitra (Mj). One is referred to \cite{mahan-icm} for a detailed history of 
the development. However, in this paper we obtain partial answers to both 
the parts of Question \ref{intro qn 1}:

\smallskip
{\em {\bf Theorem 1.} \textup{(Theorem \ref{electric ct})}
Suppose we have the hypotheses of Question \ref{intro qn 1}. In addition suppose
$\{B_i\}$ is a locally finite collection of subsets of $X$.

(1) If the inclusion $\HAT{\iota}: \HAT{Y}\map \HAT{X}$ satisfies Mitra's criterion then the 
inclusion $\iota:Y\map X$ admits the CT map.

(2) Suppose  the CT map $\partial \iota: \partial Y\map \partial X$ exists. 
Then the CT map $\partial \HAT{\iota}:\partial \HAT{Y}\map \partial \HAT{X}$ exists if and only if for any $B_i$, and
$\xi\in \Lambda_X(B_i)\cap Im(\partial \iota)$ we have $\xi \in \Lambda_X(A_j)$ for some $A_j$.
 }

\smallskip

Mitra's criterion appears in \cite[Lemma 2.1]{mitra-trees} which gives a sufficient condition for the
existence of CT maps. See Lemma
\ref{mitra's criterion} of this paper for the statement. Also, 
we recall that if $Z$ is a metric space then a collection of subsets $\{Z_i\}$ of $Z$
is called {\em locally finite} for any $z\in Z$ and $R>0$, $B(z,R)$ intersects only
finitely many $Z_i$. However, we show by an example (see Example \ref{counter example}) that 
in the first part of the above theorem `Mitra's criterion' cannot be
replaced by mere existence of CT maps. 

On the other hand, in the course of the proof of Theorem $1$ we prove the following result
describing a relation between $\partial X$ and $\partial \HAT{X}$. We note that some parts of this
theorem were proved earlier by other authors. Comments on those works are included after the
statement of the theorem.

\smallskip
{\em{\bf Theorem 2.} 
Given $k\geq 0, \delta \geq 0$ there are $K\geq 0,R\geq 0$ such that the following hold:\\
Suppose $X$ is a $\delta$-hyperbolic metric space and $\{B_i\}$ is a collection of $k$-quasiconvex subsets of $X$.
Let $\HAT{X}$ denote the coned-off space obtained by coning the $B_i$'s. Let $\iota:X\map \HAT{X}$ denote the
inclusion map. Suppose $\gamma$ is a geodesic ray in $X$. Then we have the following:

(1) If $\gamma$ is an unbounded subset of $\HAT{X}$ then there is a $K$-quasigeodesic ray $\beta$ of
$\HAT{X}$ having the same starting point as that of $\gamma$ such that $Hd_{\HAT{X}}(\gamma, \beta)\leq R$. 
In particular, $\{\gamma(n)\}$ converges to a point of $\partial \HAT{X}$

{\em Let $\partial_h X$ denote the set of all points $\xi \in \partial X$ such that for
any geodesic ray $\gamma$ in $X$ with $\gamma(\infty)=\xi$, $\iota(\gamma)$ is an unbounded subset of $\HAT{X}$.}

(2) The map $\partial_h X\map \partial \HAT{X}$ described in (1) is a homeomorphism
where $\partial_h X$ has the subspace topology from $\partial X$.

(3) If in addition the collection $\{B_i\}$ is locally finite then for any geodesic $\gamma$ of $X$
which is bounded in $\HAT{X}$, $\gamma(\infty)\in \Lambda_X(B_j)$ for some $B_j$.

In particular in this case we have} $$\partial X= \partial_h X\bigcup (\cup_i \Lambda_X(B_i)).$$ 

Here $\Lambda_X(B_i)$ denotes the limit set of $B_i$ in $\partial X$ for any $B_i$.
However, part (1) of Theorem $2$
was proved earlier by Kapovich and Rafi (\cite[Corollary 2.4]{kapovich-rafi}) and (2) was
proved by Dowdall and Taylor (\cite[Theorem 3.2]{dowdall-taylor}). A statement similar to
part (3) was proved by Abbott and Manning (\cite[Lemma 6.12]{abbott-manning}). However, our proofs for
all the parts of Theorem $2$ are completely independent of the proofs given by the previous authors.
The main ingredient of our proofs is the second part of the above proposition of Dahmani and Mj.
In this paper, (1) of Theorem $2$ is proved as Lemma \ref{hor rays}, part (2) as 
Theorem \ref{electric cor}; first statement of part (3) is proved in Proposition \ref{electric main lemma},
and the last part is the content of Theorem \ref{elec union thm}.

Subsequently, the main set of examples of hyperbolic groups to which we apply Theorem $1$
comes from the following context.

\smallskip
{\bf Two problems on complexes of hyperbolic groups}\\
Motivated by the celebrated combination theorem of Bestvina and Feighn \cite{BF} for graphs of hyperbolic 
groups,  Misha Kapovich (\cite[Problem 90]{kapovich-problem}) asked if one can prove a combination theorem for complexes of hyperbolic groups.
We recall the relevant definitions and results about complexes of groups in Section \ref{4}.
However, one may pose an important special case of M. Kapovich's problem as follows. 

\smallskip
{\bf Problem 1.} {\em Let $(\mathcal{G},\YY)$ be a developable complex of groups such that
the following hold:

\noindent
(a) $\YY$ is a finite connected simplicial complex,\\
(b) local groups are hyperbolic and local maps are quasiisometric embeddings,\\
(c) the universal cover of $(\mathcal{G},\YY)$ is a hyperbolic space.

Then find sufficient conditions under which $\pi_1(\mathcal{G},\YY)$ is a hyperbolic group. }

\smallskip
\begin{rem}
We shall refer to a complex of groups $(\GG,\YY)$ satisfying (a), (b), (c) of Problem 1 to be a
{\bf complex of hyperbolic groups}. 
\end{rem}
For graphs of hyperbolic groups Bestvina-Feighn's hallway flaring condition proved to be necessary and sufficient, 
\cite{BF},\cite[Corollary 6.7]{gersten-necessary}.
Similar results follow from \cite{mahan-sardar} when the local maps are quasiisometries, i.e. isomorphisms
onto finite index subgroups. In contrast to that in \cite{martin1} (along with \cite[Corollary, p. 805]{martin-univ}) 
A. Martin proved a combination theorem for acylindrical complexes of hyperbolic groups.
We recall the statement of Martin's theorem in Section \ref{martin}.  

However, these combination theorems motivate the following.

\smallskip
{\bf Problem 2.} Suppose {\em $\YY_1\subset \YY$ are connected simiplicial complexes, $(\GG,\YY)$ is a
developable complex of groups and $(\GG,\YY_1)$ is the restriction of $(\GG,\YY)$ to $\YY_1$ such that 
the following hold:
\begin{enumerate}
\item $(\GG, \YY)$, $(\GG,\YY_1)$ are complexes of hyperbolic groups.
\item The groups $\pi_1(\GG,\YY)$ and $\pi_1(\GG,\YY_1)$ are both hyperbolic.
\item The induced homomorphism $\pi_1(\GG,\YY_1)\map \pi_1(\GG,\YY)$ is injective.
\item If $B$, $B_1$ are the universal covers of $(\GG,\YY)$ and $(\GG,\YY_1)$ 
respectively then the natural inclusion $B_1\map B$ admits the CT map.
\end{enumerate}
 Does the CT map exist for the inclusion $\pi_1(\GG,\YY_1)\map \pi_1(\GG,\YY)$ ?}

\smallskip
We note that in this case it is automatic that $(\GG,\YY_1)$ is developable 
\cite[Corollary 2.15, Chapter III.C]{bridson-haefliger} and it satisfies (a) and (b)
of Problem 1. Also (3) of Problem 2 implies that the natural map $B_1\map B$ is injective. However,
solution to this problem is unknown in general even where $\YY_1$ is a vertex of $\YY$. For a graph $\YY$ 
this was answered in the affirmative in \cite{mitra-trees}, in case $\YY_1$ is a vertex of $\YY$,  and
for any connected subgraph $\YY_1$ in \cite{kapovich-sardar}. When the local maps are all quasiisometries 
then this is known to be true for any (finite connected simplicial complex) $\YY$ 
due to the work of \cite{mahan-sardar} if $\YY_1$ is a vertex of $\YY$, and 
for any $\YY_1$ such that $B_1\map B$ is a quasiisometric embedding due to \cite{mbdl2}. 
There is no other case known to be true till date. 
However, we prove the following theorem as one of the applications of Theorem $1$.


\smallskip
\begin{comment}Here is the set up of the theorem we are going to state.
\begin{enumerate}
\item Suppose $(\mathcal{G},Y)$ is a complex of groups satisfying the hypothesis of Martin's theorem. 
\item Suppose $Y_1$ is a connected subcomplex of $Y$ and $(\mathcal{G},Y_1)$ is the subcomplex of 
groups obtained by restricting $(\mathcal{G},Y)$ to $Y_1$. 
\item Suppose $(\mathcal{G},Y_1)$ also satisfies the hypotheses of Martin's theorem.

Thus both $\pi_1(\mathcal{G},Y_1)=H$, say, and $\pi_1(\mathcal{G},Y)=G$, say, are hyperbolic.
\item Suppose the natural homomorphism $H=\pi_1(\mathcal{G},Y_1) \map G=\pi_1(\mathcal{G},Y)$ is injective, and

Let $B, B_1$ be the universal covers of $(\mathcal{G},Y)$  and $(\mathcal{G},Y_1)$ respectively.  
It then follows \RED{ref} that there is an embedding $B_1\map B$ of the universal covers.
\end{enumerate} \end{comment}

{\em {\bf Theorem 3.}\textup{(Theorem \ref{acyl ct thm})} \label{ct map}
Suppose $(\GG,\YY)$ is a developable complex of groups and $\YY_1\subset \YY$ is a connected subcomplex and
$(\GG,\YY_1)$ is the restriction of $(\GG,\YY)$ to $\YY_1$ such that the following hold:
\begin{enumerate}
\item $(\GG,\YY)$, $(\GG, \YY_1)$ are complexes of hyperbolic groups.
\item The groups $\pi_1(\GG,\YY)$ and $\pi_1(\GG,\YY_1)$ are both hyperbolic.
\item All the local groups of $(\GG,\YY)$ are quasiconvex in $\pi_1(\GG,\YY)$.
\item The induced homomorphism $\pi_1(\GG,\YY_1)\map \pi_1(\GG,\YY)$ is injective.
\item Suppose $B$, $B_1$ are the universal covers of $(\GG,\YY)$ and $(\GG,\YY_1)$ and
the natural inclusion $B_1\map B$ satisfies Mitra's criterion.

\end{enumerate}
Then the Cannon-Thurston map for the inclusion $\pi_1(\GG, \YY_1)\map \pi_1(\GG,\YY)$ exists.
Moreover, $\pi_1(\GG, \YY_1)$ is quasiconvex in $\pi_1(\GG,\YY)$  if and only if the Cannon-Thurston map for 
the inclusion $B_1\rightarrow B$ is injective.

In particular if (1)-(4) hold and $B_1$ is quasiisometrically embedded in $B$ then $H$ is quasiconvex in $G$. }

\smallskip
A particularly interesting instance where the hypotheses of Theorem $3$ are true is in the context of
acylindrical complexes of hyperbolic groups:

Suppose $(\GG,\YY)$ is a complex of groups and $\YY_1\subset \YY$ is a subcomplex 
and $(\GG,\YY_1)$ is the restriction of $(\GG,\YY)$ to $\YY_1$ such that the following hold:
\begin{enumerate}
\item $(\GG,\YY)$, $(\GG, \YY_1)$ are complexes of hyperbolic groups.
\item The universal cover $B$ and $B_1$ of $(\GG,\YY)$ and $(\GG,\YY_1)$ respectively are both CAT(0).
\item The action of $\pi_1(\GG,\YY)$ on $B$ is acylindrical.
\item The induced homomorphism $\pi_1(\GG,\YY_1)\map \pi_1(\GG,\YY)$ is injective.
\end{enumerate} 
Then it follows by Martin's theorem (see \cite[p. 34]{martin1} or Section \ref{martin} of this paper for the statement
of the theorem) 
that $\pi_1(\GG,\YY)$ is hyperbolic and the local groups
are quasiconvex in $\pi_1(\GG,\YY)$. Also clearly if the inclusion $B_1\map B$ is a proper 
embedding, for instance when the inclusion $B_1\map B$ satisfies Mitra's criterion, 
then the action of $\pi_1(\GG, \YY_1)$ on $B_1$ is acylindrical. In that case, again
by Martin's theorem $\pi_1(\GG, \YY_1)$ is hyperbolic. However, we have the following
theorem in this situation.

\smallskip
{\bf Theorem 4.} (Corollary \ref{martin application})
{\em Suppose the map $B_1\rightarrow B$ satisfies Mitra's criterion.

Then there exists a Cannon-Thurston map for the inclusion $\pi_1(\GG, \YY_1)\map \pi_1(\GG,\YY)$ and
$\pi_1(\GG,\YY_1)$ is quasiconvex in $\pi_1(\GG,\YY)$ if and only if the CT map for the inclusion 
$B_1\rightarrow B$ is injective. }

\smallskip

Lastly we prove the following special case of the above theorem where we can say more.

\smallskip

{\bf Theorem 5.} (Theorem \ref{edge qc}) {\em 
Suppose $\YY$ is a regular Euclidean polygon with at least four sides 
and $(\GG,\YY)$ is a complex of hyperbolic groups satisfying the conditions of Martin's theorem, i.e.
\begin{enumerate} 
\item $(\GG,\YY)$ is a complex of hyperbolic groups.
\item The universal cover of $(\GG,\YY)$ is CAT(0).
\item $\pi_1(\GG,\YY)$-action on the universal cover is acylindrical.
\end{enumerate}
If $\YY_1$ is an edge of $\YY$ then ($\pi_1(\GG,\YY)$ is hyperbolic and) the natural homomorphism
$\pi_1(\GG,\YY_1)\map \pi_1(\GG,\YY)$ is an injective qi embedding.}


\subsection{Outline of the paper.} 
Section \ref{2} reviews some definitions and results on hyperbolic spaces, in particular results
about quasiconvex subspaces, Gromov boundaries and CT maps. Section \ref{3} is the technical heart of the
paper where we discuss coned-off spaces and prove the main theorem of the paper. 
In Section \ref{section 4} we recall basic facts about complexes of groups and prove two theorems
about the existence of CT maps in the context of complexes of hyperbolic groups. 
Finally in Section \ref{5} we prove an interesting quasiconvexity theorem (Theorem \ref{edge qc}) and mention
a few examples to end the paper.\\

{\bf Acknowledgements:} The first author was partially supported by DST MATRICS grant (MTR/2017/000485) of the Govt of India. 
The second author was supported by the UGC research fellowship (Ref. No. 20/12/2015(ii)EU-V) of the Govt of India.

\section{Preliminaries}\label{2}
\subsection{Basic coarse geometric notions}
In this subsection we recall some standard notions of coarse geometry and set up
some notation and convention. 
Let $X$ be a metric space. For all $x,y\in X$ their distance in $X$ is denoted by $d_X(x,y)$ or
simply $d(x,y)$ when $X$ is understood. 
For any $A\subset X$ and $D\geq 0$ we denote the closed $D$-neighborhood, i.e. 
$\{x\in X: d(x,a)\leq D \mbox{ for some }\, a\in A\}$ by $N_D(A)$. For $A, B\subset X$ we shall denote by $d_X(A,B)$ the
quantity $\inf\{d_X(a,b):a\in A, b\in B\}$. The {\em Hausdorff distance} between $A,B$ in $X$ is defined to be
$Hd(A,B):= \inf\{D\geq 0: A\subset N_D(B), B\subset N_D(A)\}$. If $A\subset X$ we say that $A$ is {\em rectifiably connected}
in $X$ if for all $x,y\in A$, there is a path $\alpha:[0,1]\map X$ joining $x,y$ which is of finite length such that
$\alpha([0,1]) \subset A$. For a rectifiably connected subset $A\subset X$, by induced length metric on $A$ we mean
the length metric associated to the restriction of $d_X$ on $A$. We shall assume that this is a geodesic
metric as defined next.

Suppose $x,y \in X$. A \emph{geodesic (segment)} joining $x$ to $y$ is an 
isometric embedding $\alpha: [a,b] \map X$ where $[a,b]\subset \RR$ is an interval such that $\alpha(a)=x,\alpha(b)=y$. 
Most of the time we are interested only in the image of this embedding rather than the embedding itself.
We shall denote by $[x,y]_X$  or simply by $[x,y]$ the image of a geodesic joining $x,y$. 
If any two points of $X$ can be joined by a geodesic segment
then $X$ is said to be a {\em geodesic metric space} and $d$ is called a geodesic metric on $X$. 
In this paper, graphs are assumed to be connected
and it is assumed that each edge is assigned a unit length so that the graphs are naturally geodesic metric spaces 
(see \cite[Section 1.9, I.1]{bridson-haefliger}). Below are some other definitions important for us.

\begin{enumerate}
\item
Suppose that $X$ and $Y$ are two metric spaces and $\rho:[0,\infty)\map [0,\infty)$ is any map. A map $f:X\rightarrow Y$ is said to be a
$\rho$-\emph{proper embedding} if $d_Y(f(x),f(x'))\leq M$ implies $d_X(x,x')\leq \rho(M)$ for all $x,x'\in X$. A map $f:X\map Y$
is called a proper embedding if it is a $\rho$-proper embedding for some map $\rho:[0,\infty)\map [0,\infty)$.

In all instances in this paper the space $X$ is a subspace of $Y$, $f$ is the inclusion map and the metric on $X$ is the
induced length metric \cite[Definition 3.3, I.3]{bridson-haefliger} from $Y$. {\em Throughout the paper we have used $\iota$
to denote inclusion maps.}

\item If $L\geq 0$ then an {\em $L$-Lipschitz}
map $f:X\map Y$ between two metric spaces is one such that $d_Y(f(x),f(x'))\leq Ld_X(x,x')$ for all
$x,x'\in X$. A $1$-Lipschitz map will simply be called a Lipschitz map. 

A very common instance of  Lipschitz maps that will be encountered later is the following: If $Y$ is a length space
and $X$ is a subspace with the induced length metric from $Y$, then the inclusion map $\iota:X\map Y$ is Lipschitz. 

\item A map $f:X\map Y$ is said to be $L$-{\em coarsely Lipschitz} for a constant 
$L\geq 0$ if $d_Y(f(x), f(x'))\leq L+Ld_X(x,x')$ for all $x,x'\in X$. A map $f:X\map Y$ is said to be
coarsely Lipschitz if it is $L$-coarsely Lipschitz for some $L\geq 0$.

\item
Given $\lambda\geq 1,\epsilon\geq 0$, a map 
$f:X\rightarrow Y$ is said to be a \emph{$(\lambda,\epsilon)$-quasiisometric embedding} if for all $x,x'\in X$ we have,
$$\dfrac{1}{\lambda}d_X(x,x')-\epsilon\leq d_Y(f(x),f(x'))\leq \lambda d_X(x,x')+\epsilon.$$
A $(\lambda,\lambda)$-qi embedding is simply called a $\lambda$-qi embedding.

The map $f$ is said to be $(\lambda,\epsilon)$-\emph{quasiisometry} if $f$ is a \emph{$(\lambda,\epsilon)$-quasiisometric embedding} 
and moreover, $N_D(f(X))=Y$ for some $D\geq 0$.  

\item
 A $(\lambda,\epsilon)$-\emph{quasigeodesic} in a metric space $X$ is a \emph{$(\lambda,\epsilon)$-quasiisometric embedding} from an interval $I\subset \mathbb{R}$ in $X$. 

A (quasi)geodesic $\alpha:I\map X$ is called a {\em (quasi)geodesic ray} if $I=[0,\infty)$ and it is called a {\em (quasi)geodesic} 
line if $I=\RR$. 
A $(\lambda, \lambda)$-quasigeodesic segment or ray or line will simply be called a $\lambda$-quasigeodesic segment or
ray or line resepectively.
\end{enumerate}


{\bf Convention.} Occasionally we use phrases like  {\em uniform qi embedding}, {\em uniform quasigeodesic} etc
if (1) we do not need an explicit value of the corresponding parameters and (2) it is clear that there is 
such a value given the hypotheses of the lemma or proposition; e.g. see the second part of the following lemma.

\smallskip
The first part of the following lemma is very standard and the second part follows from the first immediately,
and hence we skip both their proofs. 
\begin{lemma}\label{quasigeodesic}
Given $L\geq 1, k\geq 1, \epsilon\geq 0$ and $\rho:[0,\infty)\map [0,\infty)$ we have constants
$K_{\ref{quasigeodesic}}=K_{\ref{quasigeodesic}}(L,\rho)$ and 
$C_{\ref{quasigeodesic}}=C_{\ref{quasigeodesic}}(\rho,k,\epsilon)$
such that the following hold:

(1) Suppose $X$, $Y$ are two metric spaces, and $f:Y\map X$ is an $L$-coarsely Lipschitz, $\rho$-proper embedding.
Suppose $Z$ is a geodesic metric space and $g:Z\map Y$ is such that $f\circ g$ is a $k$-qi embedding.
Then $g$ is a $K_{\ref{quasigeodesic}}$-qi embedding.

(2) Suppose $Y$ is a subspace of a metric space $X$ equipped with the induced 
length metric from $X$. If the inclusion $Y\map X$ is a $\rho$-proper embedding, 
then any $(k,\epsilon)$-quasigeodesic of $X$ contained in $Y$ is a
$C_{\ref{quasigeodesic}}$-quasigeodesic in $Y$.
\end{lemma}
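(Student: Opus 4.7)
For part (1), the plan is a standard subdivision argument exploiting that $Z$ is geodesic. Given $z,z'\in Z$, choose points $z=z_0,z_1,\ldots,z_n=z'$ along a $Z$-geodesic from $z$ to $z'$ with $d_Z(z_i,z_{i+1})\leq 1$ and $n\leq d_Z(z,z')+1$. The $k$-qi property of $f\circ g$ bounds $d_X(f(g(z_i)),f(g(z_{i+1})))$ by $2k$, and then $\rho$-properness of $f$ bounds each $d_Y(g(z_i),g(z_{i+1}))$ by $\rho(2k)$; summing via the triangle inequality produces a linear upper bound on $d_Y(g(z),g(z'))$ in terms of $d_Z(z,z')$. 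For the lower bound, the $L$-coarse Lipschitz condition on $f$ gives $d_X(f(g(z)),f(g(z')))\leq L\,d_Y(g(z),g(z'))+L$, which combined with the qi lower bound $d_X(f(g(z)),f(g(z')))\geq \tfrac{1}{k}d_Z(z,z')-k$ yields the required linear lower bound on $d_Y(g(z),g(z'))$. Taking $K_{\ref{quasigeodesic}}$ to be the maximum of the resulting multiplicative and additive constants then exhibits $g$ as a qi embedding.

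For part (2) the plan is to reduce directly to part (1) by taking $f$ to be the inclusion $\iota:Y\map X$. Since $Y$ carries the induced length metric from $X$, the inclusion is Lipschitz, hence $1$-coarsely Lipschitz; it is $\rho$-proper by hypothesis. A $(k,\epsilon)$-quasigeodesic $\alpha:I\map X$ whose image lies in $Y$ is, in particular, a $\max\{k,\epsilon\}$-qi embedding of the geodesic interval $I$ into $X$. Setting $Z=I$ and $g=\alpha$, so that $f\circ g=\alpha$, part (1) immediately gives that $\alpha:I\map Y$ is a uniform qi embedding, i.e.\ a $C_{\ref{quasigeodesic}}$-quasigeodesic of $Y$ with $C_{\ref{quasigeodesic}}$ depending only on $\rho,k,\epsilon$.

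The only genuine technical point is the subdivision step in (1): applying $\rho$-properness directly to the pair $(z,z')$ would produce only the nonlinear bound $d_Y(g(z),g(z'))\leq \rho(k\,d_Z(z,z')+k)$, since $\rho$ is a priori just an arbitrary function. Breaking the $Z$-geodesic into unit-length pieces converts the qualitative properness condition into a linear bound by evaluating $\rho$ at the single fixed value $2k$; this is where the hypothesis that $Z$ is geodesic is used in an essential way.
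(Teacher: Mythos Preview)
Your proof is correct and matches the paper's indicated approach: the paper omits the proof entirely, noting only that part (1) is ``very standard'' and that part (2) ``follows from the first immediately,'' which is precisely your subdivision argument for (1) and your reduction of (2) to (1) via the inclusion map. One minor remark: your computation shows $K_{\ref{quasigeodesic}}$ depends on $L$, $\rho$, and $k$ (via $\rho(2k)$ and $Lk$), not just on $L$ and $\rho$ as the lemma's statement suggests; this appears to be a small oversight in the paper's stated dependencies, and in any case is harmless for the application to (2), where the resulting constant correctly depends on $\rho,k,\epsilon$.
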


The lemma below follows from an easy calculation, hence we skip its proof. 
\begin{lemma}\label{concat geod}
Given $D\geq 0$ there is $K_{\ref{concat geod}}=K_{\ref{concat geod}}(D)\geq 1$ such that the following holds:\\
Suppose $X$ is a geodesic metric space and $x_0\in X$. Suppose that $\{x_n\}$ is a sequence of points of $X$
such that for all $n\in \NN$ there is a geodesic $\alpha_n$ joining $x,x_n$ with the following properties:

(1) $x_i\in N_D(\alpha_n)$ for $1\leq i\leq n$;

(2) if $y_i$ is any nearest point of $\alpha_n$ from $x_i$
then $d(x,y_{i+1})>d(x,y_i)$ for $1\leq i\leq n-1$.

If $\beta_i$ is any geodesic in $X$ joining $x_i$ to $x_{i+1}$, for all $i\geq 0$ then the concatenation
of $\beta_i$'s is a $K$-quasigeodesic in $X$.
\end{lemma}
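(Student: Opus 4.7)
The plan is to use, for each $0 \le i < j$, the geodesic $\alpha_n$ with $n \ge j$ as a reference, together with its nearest-point projections $y_0, y_1, \ldots, y_n$ of $x_0, x_1, \ldots, x_n$. Hypothesis (1) gives $d(x_k, y_k) \le D$ for all $k$, and hypothesis (2) places the projections in strictly monotonic order along $\alpha_n$, so that $d(y_i, y_j) = \sum_{k=i}^{j-1} d(y_k, y_{k+1})$.

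By the triangle inequality, the segment-level upper bound $d(x_k, x_{k+1}) \le d(y_k, y_{k+1}) + 2D$ and the endpoint lower bound $d(x_i, x_j) \ge d(y_i, y_j) - 2D$ hold simultaneously. Summing the former and invoking the latter yields
\[
\sum_{k=i}^{j-1} d(x_k, x_{k+1}) \le d(y_i, y_j) + 2D(j-i) \le d(x_i, x_j) + 2D + 2D(j-i).
\]
The strict monotonicity of $\{d(x_0, y_k)\}$ along $\alpha_n$ then controls the index count $(j - i)$ in terms of $d(y_i, y_j)$, and hence in terms of $d(x_i, x_j)$, up to constants depending only on $D$; since projections on a geodesic segment cannot accumulate without producing comparable linear progress, one obtains an estimate of the form $\sum_{k=i}^{j-1} d(x_k, x_{k+1}) \le K \cdot d(x_i, x_j) + K$ for some $K = K(D)$.

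To lift this from the indexed points to the full concatenation $\gamma$ parameterized by arc length, I would consider arbitrary $\gamma(s) \in \beta_i$ and $\gamma(t) \in \beta_j$ with $s < t$. The arc length between them is $d(\gamma(s), x_{i+1}) + \sum_{k=i+1}^{j-1} d(x_k, x_{k+1}) + d(x_j, \gamma(t))$, while $d(\gamma(s), \gamma(t))$ compares to $d(x_{i+1}, x_j)$ up to boundary terms bounded by the lengths of the adjacent $\beta_k$'s, since each $\beta_k$ is itself a geodesic. Combining the upper and lower bounds gives the desired $(K, K)$-quasigeodesic inequality. The main obstacle I anticipate is the quantitative bookkeeping that turns the strict monotonicity of $d(x_0, y_k)$ into a bound on $(j-i)$ controlled by $d(x_i, x_j)$; this is the ``easy calculation'' the authors elide, and it is the only genuinely subtle step in the argument.
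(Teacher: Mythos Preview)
The paper omits the proof entirely (``follows from an easy calculation''), so there is nothing to compare against; your outline is the natural approach and is almost certainly what the authors have in mind. The reduction to $\sum_{k=i}^{j-1} d(x_k,x_{k+1})\le d(x_i,x_j)+2D+2D(j-i)$ via the projections $y_k$ on $\alpha_n$ is correct, as is the passage from indexed points to arbitrary points on the concatenation.

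However, the step you yourself flag---bounding $(j-i)$ in terms of $d(x_i,x_j)$ from \emph{strict} monotonicity of $d(x_0,y_k)$---does not go through, and your sentence ``projections on a geodesic segment cannot accumulate without producing comparable linear progress'' is not a justification. In fact the lemma as literally stated is false. Take $X$ to be the tree obtained from a segment $[x_0,p]$ by attaching, at each point $p_k\in[x_0,p]$ with $d(x_0,p_k)=L-2^{-k}$, a spike of length $D$ with tip $x_k$. Then $\alpha_n=[x_0,p_n]\cup[p_n,x_n]$, the nearest point $y_i$ of $x_i$ on $\alpha_n$ is $p_i$ for $i<n$, hypotheses (1) and (2) hold, yet $d(x_i,x_j)\le 2D+1$ while $\sum_{k=i}^{j-1} d(x_k,x_{k+1})\ge 2D(j-i)$, so no $K=K(D)$ works.

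What rescues the situation is that in the paper's only use of the lemma (the proof of Lemma~\ref{hor rays}) the points are chosen with a definite gap $d(x_0,x_{n})>d(x_0,x_{n-1})+2D$. Under that extra assumption one gets $d(y_i,y_j)\ge d(x_0,x_j)-d(x_0,x_i)-2D>2D(j-i)-2D$, hence $(j-i)<d(x_i,x_j)/(2D)+2$, and your inequality becomes $\sum_{k=i}^{j-1} d(x_k,x_{k+1})<2\,d(x_i,x_j)+6D$. So the right fix is to add a uniform gap hypothesis (either on $d(x_0,x_n)$ or, equivalently, $d(x_0,y_{i+1})>d(x_0,y_i)+c$ for some fixed $c>0$); with that in hand, your argument is complete and really is an easy calculation.
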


The following lemma is a basic exercise in point set topology and hence we skip its proof too.
\begin{lemma}\label{subseq}
Suppose $Z$ is a Hausdorff topological space, $z\in Z$ and $\{z_n\}$ is a sequence in $Z$. 
If for any subsequence $\{z_{n_k}\}_k$ of $\{z_n\}$, there exists a further subsequence $\{z_{n_{k_{l}}}\}_l$ 
of $\{z_{n_k}\}_k$ such that $\{z_{n_{k_{l}}}\}$ converges to $z$ then $\lim z_n= z$.
\end{lemma}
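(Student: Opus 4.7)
The plan is to argue by contradiction. Suppose $z_n \not\to z$; I want to produce a subsequence of $\{z_n\}$ no further subsequence of which can converge to $z$, contradicting the hypothesis.

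First I would unpack the negation of $\lim z_n = z$. Since the topology on $Z$ is determined by open neighborhoods of $z$, the failure of convergence means there is an open set $U \ni z$ such that $z_n \notin U$ for infinitely many $n$. Listing those indices in increasing order gives a subsequence $\{z_{n_k}\}_k$ with the property that $z_{n_k} \in Z \setminus U$ for every $k$.

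Next I would apply the hypothesis to this particular subsequence: there must exist a further subsequence $\{z_{n_{k_l}}\}_l$ with $z_{n_{k_l}} \to z$ in $Z$. But convergence to $z$ forces the tail $\{z_{n_{k_l}}\}$ to lie eventually inside the open neighborhood $U$ of $z$, whereas by construction every term of the subsequence $\{z_{n_k}\}_k$ (and hence of any further subsequence) lies in the closed set $Z \setminus U$. This is the desired contradiction, so $\lim z_n = z$.

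There is no real obstacle here; the only subtle point is that Hausdorffness is not actually used in this direction, but it is exactly what guarantees that the candidate limit $z$ is unambiguous, so the statement ``$\lim z_n = z$'' makes sense. Since the argument is a standard diagonal/neighborhood trick and fits in a few lines, I would write it exactly as above without further embellishment.
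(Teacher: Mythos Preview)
Your argument is correct and is exactly the standard one-paragraph contradiction proof. The paper itself omits the proof entirely, stating only that it ``is a basic exercise in point set topology,'' so there is nothing to compare against; your write-up would serve perfectly well as the missing proof. (One minor remark: your closing comment is slightly off---the statement ``$z_n\to z$'' makes sense in any topological space, and indeed your argument never uses Hausdorffness; the hypothesis is there only to ensure limits are unique, which is irrelevant to the implication being proved.)
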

\subsection{Gromov hyperbolic spaces} 
In this subsection we briefly recall some definitions and results about (Gromov) hyperbolic metric spaces that will be
relevant for us. We refer the reader to Gromov's original article \cite{gromov-hypgps} as well as some of the standard references
like \cite{GhH}, \cite{bridson-haefliger} for more details. 

\begin{defn}
(1) By a geodesic polygon with $n$ sides in a geodesic metric space $X$ we mean a choice of 
$n$ points in $X$, say $x_1,x_2,\cdots, x_n$ and $n$ geodesic segments $[x_i,x_{i+1}]$, $1\leq i\leq n$
where $x_{n+1}=x_1$.

(2) Given $\delta\geq 0$, a geodesic triangle $\triangle$ in a geodesic metric space is said to be $\delta$-\emph{slim} if any side of $\triangle$ is contained in the union of $\delta$-neighborhood of remaining two sides. 

A geodesic metric space $X$ is said to be \emph{hyperbolic} if there exists $\delta\geq 0$ such that every geodesic 
triangle in $X$ is $\delta$-slim.

In this case the space is called $\delta$-hyperbolic.
\end{defn}

The following lemma is immediate from the definition of hyperbolic spaces.
\begin{lemma}\label{slim polygon}
A geodesic polygon with $n$ sides in a $\delta$-hyperbolic space is $(n-2)$-slim, i.e. 
any side of the polygon is contained in the $(n-2)\delta$-neighborhood of the union
of the remaining sides.
\end{lemma}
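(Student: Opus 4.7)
The plan is to prove this by induction on $n$, using the fact that adding a diagonal splits an $n$-gon into a triangle and an $(n-1)$-gon.

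The base case $n=3$ is immediate from the definition of $\delta$-hyperbolicity: a geodesic triangle is $\delta$-slim, which is exactly $(3-2)\delta$-slim. For the inductive step, assume the result holds for all geodesic $(n-1)$-gons, and consider a geodesic $n$-gon with vertices $x_1, x_2, \ldots, x_n$ and sides $[x_i, x_{i+1}]$. To show that a particular side, say $[x_1, x_2]$, is contained in the $(n-2)\delta$-neighborhood of the union of the remaining sides, I would pick any geodesic $[x_1, x_3]$ (a diagonal) and split the $n$-gon into two pieces: the triangle $\Delta$ with sides $[x_1, x_2]$, $[x_2, x_3]$, $[x_1, x_3]$, and the $(n-1)$-gon $P$ with sides $[x_1, x_3]$, $[x_3, x_4]$, $\ldots$, $[x_n, x_1]$.

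By $\delta$-slimness of $\Delta$, every point $p \in [x_1, x_2]$ lies within $\delta$ of either $[x_2, x_3]$ or the diagonal $[x_1, x_3]$. In the first case, $p$ is already within $\delta \leq (n-2)\delta$ of one of the remaining sides of the original $n$-gon and we are done. In the second case, the inductive hypothesis applied to the $(n-1)$-gon $P$ says that the diagonal $[x_1, x_3]$ lies in the $(n-3)\delta$-neighborhood of $[x_3, x_4] \cup \cdots \cup [x_n, x_1]$, so by the triangle inequality $p$ lies within $\delta + (n-3)\delta = (n-2)\delta$ of this union, which consists entirely of sides of the original $n$-gon other than $[x_1, x_2]$.

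There is no real obstacle here; the only subtle point is to make sure the choice of diagonal is legitimate (it is, since the hyperbolic space is assumed geodesic, so any two points can be joined by some geodesic segment) and that the argument is symmetric in the sense that any side (not just $[x_1, x_2]$) can be singled out by relabeling the vertices cyclically. The conclusion follows by induction.
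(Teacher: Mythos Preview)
Your proof is correct and is precisely the standard induction argument that the paper has in mind; the paper itself omits the proof entirely, stating only that the lemma ``is immediate from the definition of hyperbolic spaces.'' Your triangulation-by-diagonal approach is the canonical way to make this immediate observation precise.
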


The conclusion of the following theorem is one of the most important properties of hyperbolic metric spaces.

\begin{theorem}{\em (\cite[Theorem 1.7, III.H]{bridson-haefliger}\textup{({\bf Stability of quasigeodesics})})}\label{stability} For $\delta\geq 0,\lambda\geq 1,\epsilon \geq 0$ there exists a constant 
$D_{\ref{stability}}=D_{\ref{stability}}(\delta,\lambda,\epsilon)$ with the following property:
	
	Let $X$ be a $\delta$-hyperbolic metric space. Then, the Hausdorff distance between a 
$(\lambda,\epsilon)$-quasigeodesic and a geodesic joining the same pair of end points is 
less than or equal to $D_{\ref{stability}}$. 	
\end{theorem}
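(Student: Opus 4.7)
The plan is to prove this Morse-type stability lemma by the classical two-step recipe: first tame the quasigeodesic into a continuous one, and then bound the Hausdorff distance by a longest-excursion argument that exploits $\delta$-slim polygons (Lemma \ref{slim polygon}). Write $\sigma$ for a geodesic joining the endpoints of a given $(\lambda,\epsilon)$-quasigeodesic $c:[a,b]\to X$; the goal is to bound $Hd(\sigma, c([a,b]))$ by a constant depending only on $\delta,\lambda,\epsilon$.

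For the taming step I would replace $c$ by a continuous $(\lambda,\epsilon')$-quasigeodesic $c'$ whose image has Hausdorff distance at most $\epsilon'$ from that of $c$, with $\epsilon'$ depending only on $\lambda,\epsilon$. A concrete recipe is to sample $c$ at the integers in $[a,b]$ (plus the endpoints) and interpolate consecutive samples by ambient geodesic segments. A direct check, using only the one-sided bound $d(c(n),c(n+1))\leq \lambda+\epsilon$, shows that $c'$ is $(\lambda+\epsilon)$-Lipschitz and still a quasigeodesic with slightly worse constants. After this reduction one may assume $c$ itself is continuous and coarsely Lipschitz.

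Set $D:=\sup_{x\in\sigma} d(x,c([a,b]))$; by compactness the supremum is attained at some $x_0\in\sigma$. Travel along $\sigma$ in both directions from $x_0$ to the first points $y,z$ whose distance to $c$ drops to $D/2$ (or to an endpoint of $\sigma$). Let $c(s),c(t)$ realize these nearby distances, and consider the geodesic quadrilateral with sides $[y,c(s)]$, a geodesic $[c(s),c(t)]$, $[c(t),z]$, and the subarc $\sigma|_{[y,z]}$. By Lemma \ref{slim polygon} this quadrilateral is $2\delta$-slim, so a midpoint of $\sigma|_{[y,z]}$ lies within $2\delta$ of one of the other three sides. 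The two short sides have length at most $D/2+1$ each, so they cannot reach the midpoint once $|y-z|_\sigma$ exceeds roughly $D+4\delta$; hence the midpoint must lie within $2\delta$ of $[c(s),c(t)]$. Combining this with continuity of $c$ upgrades the conclusion to the midpoint being within $D/2+2\delta$ of $c$, forcing $D\leq D/2+2\delta$ once $|y-z|_\sigma$ is taken proportional to $D$. Pushing this through quantitatively — using the quasi-isometry bound $t-s\leq \lambda(d(c(s),c(t))+\epsilon)$ together with $d(c(s),c(t))\leq D+1+|y-z|_\sigma$ — yields $D\leq D_0(\delta,\lambda,\epsilon)$.

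For the reverse inclusion, suppose some $c(u)$ is at distance $M\gg D_0$ from $\sigma$. The nearest-point projections of $c([a,u])$ and $c([u,b])$ onto $\sigma$ are separated along $\sigma$ by at most $2D_0+O(1)$, yet along $c$ by $\Theta(M/\lambda)$ because $c$ is a quasigeodesic that must leave the $D_0$-neighborhood of $\sigma$ to reach $c(u)$; this contradicts the upper quasigeodesic bound. Undoing the taming adds $\epsilon'$ to the resulting constant, giving $D_{\ref{stability}}$. The main obstacle is the calibration in step three: the length of $\sigma|_{[y,z]}$ must be chosen proportional to $D$ so that the slim-quadrilateral inequality propagates into an honest bound on $D$ rather than collapsing into a tautology — this is precisely where hyperbolicity is used in an essential, nonlinear way.
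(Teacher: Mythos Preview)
The paper does not prove this theorem at all; it is stated with a citation to \cite[Theorem~1.7, III.H]{bridson-haefliger} and used as a black box throughout. So there is no ``paper's own proof'' to compare against---you are essentially sketching the classical Bridson--Haefliger argument.

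That said, your sketch has a genuine gap. You set $D=\sup_{x\in\sigma}d(x,c([a,b]))$ and, after the slim-quadrilateral step, conclude that a point of $\sigma$ lies within $2\delta$ of the \emph{geodesic} $[c(s),c(t)]$. You then write that ``continuity of $c$ upgrades the conclusion to the midpoint being within $D/2+2\delta$ of $c$''. This is the step that does not follow: closeness to the geodesic chord $[c(s),c(t)]$ says nothing, by itself, about closeness to the quasigeodesic arc $c|_{[s,t]}$. Continuity of $c$ gives no such bound; what is actually needed here is the logarithmic ``length-vs-distance'' lemma (Lemma~1.6 in \cite[III.H]{bridson-haefliger}), which says that any point on a geodesic $[p,q]$ is within $\delta\log_2\ell(\gamma)+1$ of any rectifiable path $\gamma$ joining $p$ to $q$. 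That lemma---proved by a divide-and-conquer on the path, not by a single slim quadrilateral---is the real engine of the proof, and your argument is circular without it.

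Relatedly, you have the two halves in the wrong order. The standard proof first bounds $\sup_t d(c'(t),\sigma)$ (quasigeodesic close to geodesic) using the logarithm lemma applied to a short subarc of $c'$, and only then deduces $\sigma\subset N_R(c')$ by the connectedness argument you sketch in your final paragraph. Your setup attempts the harder direction first with a tool that is too weak for it.
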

It easily follows from this theorem that hyperbolicity is invariant under quasiisometry.

\begin{defn}
A finitely generated group $G$ is said to be {\em hyperbolic} if the Cayley graph of $G$ with respect to some (any) finite generating 
set is hyperbolic.
\end{defn}

It is a standard consequence of Milnor-{\u S}varc lemma \cite[Proposition 8.19, I.8]{bridson-haefliger} 
that the Cayley graphs 
of any group $G$ with respect to any two finite generating sets are quasiisometric. That hyperbolicity of a group
is well-defined follows from this and the fact that hyperbolicity is a qi invariant property
for geodesic metric spaces.


\begin{defn}[{\bf Quasiconvex subspaces}] \label{qc defn}Let $X$ be a (hyperbolic) metric space and let $A\subset X$. 
Let $K\geq 0$. Then $A\subset X$ is said to be $K$-{\em quasiconvex} in $X$ if any geodesic in $X$ with end points in $A$ is 
contained in $N_K(A)$. A subset $A$ of $X$ is said to be {\em quasiconvex} if it is $K$-quasiconvex for some $K\geq 0$. A $0$-quasiconvex subset is said to be a {\em convex} subset.

If $G$ is a hyperbolic group and $H<G$ then we say that $H$ is quasiconvex in $G$ if $H$ is a quasiconvex subset of a Cayley graph
of $G$.

\end{defn}
The following lemma gives natural examples of quasiconvex subspaces in hyperbolic spaces.
The proof of the first part of the lemma is immediate from the definition of quasiconvexity,
and the second part follows from stability of quasigeodesics. Hence we omit the proofs.

\begin{lemma}\label{qc lemma 1}
(1) Any geodesic in a $\delta$-hyperbolic space is $\delta$-quasiconvex.

(2) Any $(k,\epsilon)$-quasigeodesic in a $\delta$-hyperbolic space is 
$D_{\ref{stability}}(\delta,k,\epsilon)$-quasiconvex.
\end{lemma}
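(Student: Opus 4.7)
The plan is to derive both parts directly from the slim-triangle definition, with part (2) piggybacking on the stability of quasigeodesics.

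For part (1), fix a geodesic $\gamma$ in a $\delta$-hyperbolic space $X$ and pick any two points $x,y\in\gamma$. Let $\gamma_{xy}\subset\gamma$ be the subsegment of $\gamma$ joining $x$ to $y$; this is itself a geodesic from $x$ to $y$. Let $[x,y]$ be any other geodesic from $x$ to $y$. I would view the configuration as a degenerate geodesic triangle with sides $\gamma_{xy}$, $[x,y]$, and the trivial geodesic $\{y\}$ from $y$ to $y$. Applying $\delta$-slimness to this triangle, every point of $[x,y]$ lies within $\delta$ of $\gamma_{xy}\cup\{y\}$, hence within $\delta$ of $\gamma_{xy}\subset\gamma$. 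Therefore $[x,y]\subset N_\delta(\gamma)$, which is exactly the definition of $\delta$-quasiconvexity.

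For part (2), let $\alpha:I\to X$ be a $(k,\epsilon)$-quasigeodesic and pick $x,y\in\alpha(I)$, say $x=\alpha(s)$ and $y=\alpha(t)$ with $s\le t$. Let $\alpha_{xy}:=\alpha|_{[s,t]}$, which is again a $(k,\epsilon)$-quasigeodesic from $x$ to $y$. Let $[x,y]$ be any geodesic joining $x$ to $y$. By the stability of quasigeodesics (Theorem \ref{stability}), the Hausdorff distance between $\alpha_{xy}$ and $[x,y]$ is at most $D_{\ref{stability}}(\delta,k,\epsilon)$. In particular, $[x,y]\subset N_{D_{\ref{stability}}}(\alpha_{xy})\subset N_{D_{\ref{stability}}}(\alpha(I))$, which gives the required quasiconvexity with constant $D_{\ref{stability}}(\delta,k,\epsilon)$.

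No step here is really an obstacle: part (1) is essentially a one-line unpacking of the slim-triangle definition via a degenerate triangle, and part (2) is an immediate application of the stability theorem already quoted. The only mild subtlety is remembering to use a sub-quasigeodesic of $\alpha$ (not all of $\alpha$) in part (2), so that Theorem \ref{stability} applies to a genuine quasigeodesic joining the chosen endpoints; this is why the authors comfortably omit the proof.
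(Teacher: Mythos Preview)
Your proposal is correct and matches the paper's intended approach: the authors state that part (1) is immediate from the definitions and part (2) follows from stability of quasigeodesics, which is exactly what you carry out. The degenerate-triangle argument for (1) and the restriction to a sub-quasigeodesic before applying Theorem \ref{stability} for (2) are the standard details behind their omitted proof.
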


The next lemma shows persistences of quasiconvexity under qi embeddings of hyperbolic spaces. 
Thus it follows that quasiconvexity of any subset, for instance any subgroup, of a hyperbolic group 
is well-defined, i.e. independent of the Cayley graphs. 

\begin{lemma}\label{qi map qc}
Given $\delta\geq 0, k\geq 1, K\geq 0$ there is a constant 
$K_{\ref{qi map qc}}=K_{\ref{qi map qc}}(k,\delta, K)$ such that the following holds:\\
Suppose $f:X\map Y$ is a $k$-qi embedding of $\delta$-hyperbolic metric spaces.
If $A\subset X$ is $K$-quasiconvex then $f(A)\subset Y$ is $K_{\ref{qi map qc}}$-quasiconvex.
In particular, $f(X)$ is uniformly quasiconvex in $Y$.
\end{lemma}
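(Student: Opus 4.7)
The plan is to show that any geodesic between two points of $f(A)$ stays in a bounded neighborhood of $f(A)$, using stability of quasigeodesics as the main tool. First I would fix $a, a' \in A$ and choose a geodesic $\alpha:[0,L] \to X$ joining them. Since $A$ is $K$-quasiconvex in $X$, for every $t\in [0,L]$ there exists $a_t \in A$ with $d_X(\alpha(t), a_t) \leq K$.

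Next I would transport $\alpha$ to $Y$. Because $\alpha$ is an isometric embedding and $f$ is a $k$-qi embedding, the composition $f \circ \alpha:[0,L] \to Y$ is immediately a $(k,k)$-quasigeodesic joining $f(a)$ and $f(a')$. Applying stability of quasigeodesics (Theorem \ref{stability}) then yields that any geodesic $\beta$ of $Y$ joining $f(a)$ to $f(a')$ lies within Hausdorff distance $D := D_{\ref{stability}}(\delta,k,k)$ of the image of $f \circ \alpha$. Note that this step is the reason we pass through $f\circ\alpha$ rather than treating $f(\alpha([0,L]))$ as a set: the map $f$ is not assumed continuous, so images of geodesics need not be geodesics in any naive sense, but stability applies at the level of parameterized quasigeodesics regardless.

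To finish, I would bound the distance from the image of $f \circ \alpha$ to $f(A)$. For each $t$, the qi estimate gives $d_Y(f(\alpha(t)), f(a_t)) \leq k K + k$, so the image of $f\circ\alpha$ lies in the $(kK+k)$-neighborhood of $f(A)$. Combining this with the previous step, $\beta \subset N_{K'}(f(A))$ where $K' := D_{\ref{stability}}(\delta,k,k) + kK + k$, which yields the desired constant $K_{\ref{qi map qc}}(k,\delta,K)$. The final assertion about $f(X)$ follows by specializing to $A = X$, which is trivially $0$-quasiconvex in itself. There is no real obstacle here; the only point meriting care is the non-continuity of $f$, handled as noted above.
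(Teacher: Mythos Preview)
Your proof is correct and follows essentially the same approach as the paper's: push a geodesic in $X$ through $f$ to get a $(k,k)$-quasigeodesic, apply stability of quasigeodesics, then use the qi bound to move from $f(\alpha)$ into $f(A)$, arriving at the identical constant $K_{\ref{qi map qc}} = kK + k + D_{\ref{stability}}(\delta,k,k)$. Your added remark about handling the possible non-continuity of $f$ via parameterized quasigeodesics is a helpful clarification not made explicit in the paper.
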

\begin{proof}
	Let $\gamma$ be a geodesic in $X$ joining $x_1,x_2\in A$. Since $f$ is a $k$-qi embedding, 
$f( \gamma)$ is a $k$-quasigeodesic joining $y_i=f(x_i)$ for $i=1,2$. 
Hence, for any geodesic segment $\alpha$ joining 
$y_1, y_2$ in $Y$ we have $Hd(\alpha, f(\gamma))\leq D_{\ref{stability}}(\delta,k,k)$.
On the other hand, since $A$ is $K$-quasiconvex, $\gamma\subset N_K(A)$. Thus 
$f(\gamma)\subset N_{kK+k}(f(A))$ since $f$ is a $k$-qi embedding.
Thus $\alpha\subset N_D(f(A))$ where $D=kK+k+D_{\ref{stability}}(\delta,k,k)$.
Hence we can take $K_{\ref{qi map qc}}= kK+k+D_{\ref{stability}}(\delta,k,k)$.
\end{proof}
	
Given a pair of points in a quasiconvex set, existence of a geodesic joining them which is contained
in the set is not guaranteed. However, the following is true.
\begin{lemma}\label{dotted quasi-geodesic in quasi-convex set}
Given $ K\geq 0$ there is a constant 
$K_{\ref{dotted quasi-geodesic in quasi-convex set}}=K_{\ref{dotted quasi-geodesic in quasi-convex set}}( K)\geq 0$ such that the following holds:\\
Let $X$ be a (hyperbolic) metric space and let $Q$ be a $K$-quasiconvex subset of $X$. 
Then for all $x,y\in Q$, there exists a $K_{\ref{dotted quasi-geodesic in quasi-convex set}}$-quasigeodesic
in $X$ joining $x$ and $y$ whose image is contained in $Q$.
\end{lemma}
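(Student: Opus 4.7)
The plan is to produce a quasigeodesic directly as a pointwise ``shadow'' of an actual $X$-geodesic onto $Q$, exploiting that a quasi-isometric embedding of an interval need not be continuous. Concretely, fix $x,y\in Q$, let $L=d_X(x,y)$, and let $\alpha\colon[0,L]\to X$ be an isometric parametrization of a geodesic $[x,y]_X$. By $K$-quasiconvexity of $Q$, every point of $\alpha$ lies in $N_K(Q)$.

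Next I would define a map $\beta\colon[0,L]\to Q$ by setting $\beta(0)=x$, $\beta(L)=y$, and choosing, for each $t\in(0,L)$, a point $\beta(t)\in Q$ with $d_X(\beta(t),\alpha(t))\leq K$; such a point exists by the previous step, and we use the axiom of choice to fix one. By construction, the image of $\beta$ is contained in $Q$.

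It remains to check that $\beta$ is a uniform quasigeodesic. For $s,t\in[0,L]$, the triangle inequality gives
\[
\bigl|d_X(\beta(s),\beta(t))-|s-t|\bigr|\ \leq\ d_X(\beta(s),\alpha(s))+d_X(\beta(t),\alpha(t))\ \leq\ 2K,
\]
so $\beta$ is a $(1,2K)$-quasi-isometric embedding from $[0,L]$ into $X$. Setting $K_{\ref{dotted quasi-geodesic in quasi-convex set}}(K):=\max\{1,2K\}$ converts this to a $(\lambda,\lambda)$-quasigeodesic in the paper's convention, finishing the proof.

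There is no real obstacle here; the only thing worth flagging is that $\beta$ is only a ``dotted'' (not necessarily continuous) map, which is permitted by the paper's definition of quasigeodesic as a quasi-isometric embedding of an interval. Note also that hyperbolicity of $X$ is never invoked, so the lemma in fact holds in any metric space, matching the parenthetical ``(hyperbolic)'' in the statement.
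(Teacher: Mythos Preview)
Your proof is correct and follows essentially the same approach as the paper: take a geodesic $\alpha$ from $x$ to $y$, choose for each $t$ a point $\beta(t)\in Q$ within distance $K$ of $\alpha(t)$, and observe that $\beta$ is a uniform quasigeodesic. You have in fact supplied more detail than the paper does (it only sketches the idea and asserts the conclusion), including the explicit constant $\max\{1,2K\}$ and the observation that hyperbolicity plays no role.
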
 
The proof is very standard. Hence we just explain the overall idea of it.
If $\alpha:[a,b]\map X$ is a geodesic joining $x$ to $y$ then 
for all $t\in [a,b]$ one chooses $x_t\in Q$ such that $d(x_t,\alpha(t))\leq K$. Then $\beta:[a,b]\map X$
defined by $\beta(t)=x_t$ is a uniform quasigeodesic as required. 

The following lemma shows that finite union of quasiconvex sets is quasiconvex.
The proof is clear by induction and hence we skip it.
\begin{lemma}\label{finite qc}
Given $\delta\geq 0, k\geq 0$ and $n\in \NN$, there is a constant
$D_{\ref{finite qc}}=D_{\ref{finite qc}}(\delta, k,n)$ such that the following holds:

Suppose $X$ is a $\delta$-hyperbolic metric space and $\{A_i\}_{1\leq i\leq n}$ is a collection of $k$-quasiconvex 
subsets of $X$ such that $A_i\cap A_{i+1}\neq \emptyset$ for all $1\leq i\leq n-1$.
Then $\cup_i A_i$ is a $D_{\ref{finite qc}}$-quasiconvex set in $X$.
\end{lemma}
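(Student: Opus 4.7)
The plan is to proceed by induction on $n$, with the case $n=2$ carrying all the geometric content and the rest being a trivial accumulation.

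For the base case $n=1$ the statement is immediate with constant $k$. For $n=2$, I will prove the slightly more general statement: if $A_1$ is $k_1$-quasiconvex and $A_2$ is $k_2$-quasiconvex in a $\delta$-hyperbolic space $X$ and $A_1\cap A_2\neq\emptyset$, then $A_1\cup A_2$ is $(\delta+\max(k_1,k_2))$-quasiconvex. Let $x,y\in A_1\cup A_2$ and let $\alpha$ be a geodesic joining them. If both points lie in a single $A_i$, its own quasiconvexity already gives $\alpha\subset N_{k_i}(A_i)$. Otherwise, say $x\in A_1$ and $y\in A_2$, pick any $z\in A_1\cap A_2$ (which exists by hypothesis) and form the geodesic triangle with vertices $x,y,z$. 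By $\delta$-slimness, $\alpha\subset N_\delta([x,z]\cup[z,y])$. Since $[x,z]$ has both endpoints in the $k_1$-quasiconvex set $A_1$ we have $[x,z]\subset N_{k_1}(A_1)$, and similarly $[z,y]\subset N_{k_2}(A_2)$. Combining these inclusions gives $\alpha\subset N_{\delta+\max(k_1,k_2)}(A_1\cup A_2)$, as required.

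For the inductive step, assume inductively that $B:=A_1\cup\cdots\cup A_{n-1}$ is $D'$-quasiconvex for some constant $D'=D_{\ref{finite qc}}(\delta,k,n-1)$. The condition $A_{n-1}\cap A_n\neq\emptyset$ implies $B\cap A_n\neq\emptyset$, so the two-set case applied to the $D'$-quasiconvex set $B$ and the $k$-quasiconvex set $A_n$ yields that $B\cup A_n=\cup_{i=1}^n A_i$ is $(\delta+\max(D',k))$-quasiconvex. Unrolling the recursion (noting $D'\geq k$ for $n\geq 2$), one may take $D_{\ref{finite qc}}(\delta,k,n)=k+(n-1)\delta$.

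There is no real obstacle here: the only point requiring care is that in the two-set step the quasiconvexity constants combine via $\max$ rather than by summation, which is exactly what the $\delta$-slim triangle affords by routing through a common point $z$ in the intersection. Once this is observed, the induction simply accumulates one additional $\delta$ per new set, giving the linear bound above.
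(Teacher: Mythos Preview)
Your proof is correct and is exactly the induction the paper has in mind; the paper simply asserts ``The proof is clear by induction and hence we skip it'' without giving any details, so your write-up in fact supplies what the paper omits. The two-set step via a $\delta$-slim triangle through a point of $A_1\cap A_2$, and the observation that the constants combine as $\delta+\max(k_1,k_2)$ so that the recursion yields $k+(n-1)\delta$, are precisely the intended argument.
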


In general arbitrary union of quasiconvex sets need not be quasiconvex. However the following is true.
\begin{lemma}\label{quasi-convex union}
Given $\delta\geq 0, K\geq 0$ there is  a constant
$D_{\ref{quasi-convex union}}=D_{\ref{quasi-convex union}}(\delta, K)\geq 0$ such that the following holds:

Suppose $X$ is a $\delta$-hyperbolic metric space, $\{A_i\}$ is any (finite or infinite) sequence of $K$-quasiconvex sets in $X$
and $\gamma\subset X$ is a geodesic such that $A_i\cap A_{i+1}\cap \gamma\neq \emptyset$ for all $i\geq 1$.
Then $\cup_i A_i$ is a $D_{\ref{quasi-convex union}}$-quasiconvex set in $X$.
\end{lemma}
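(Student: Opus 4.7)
The plan is to sandwich any geodesic between two points of $\cup_i A_i$ between three sides of a geodesic quadrilateral whose other three sides each lie uniformly close to $\cup_i A_i$. For each index $i$ fix $q_i \in A_i \cap A_{i+1} \cap \gamma$, which exists by hypothesis.

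The first step is an observation about $\gamma$: for any indices $p \leq r$, the subsegment of $\gamma$ joining $q_p$ and $q_r$ is contained in $N_K(\cup_i A_i)$. Indeed, for each intermediate index $i$, both $q_{i-1}$ and $q_i$ lie in the $K$-quasiconvex set $A_i$, and the subarc of $\gamma$ joining them is a geodesic in $X$, so it lies in $N_K(A_i)$. Taking the union over $i = p+1, \ldots, r$ gives a connected subset of $\gamma$ (consecutive subarcs share the endpoint $q_i$), which is therefore a single sub-interval of $\gamma$ containing all of $q_p, q_{p+1}, \ldots, q_r$; hence the $\gamma$-subsegment from $q_p$ to $q_r$ is contained in this interval, and thus in $N_K(\cup_i A_i)$. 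This step is included precisely to handle the possibility that the $q_i$'s are not monotone along $\gamma$.

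The second step is the main argument. Let $x \in A_m$ and $y \in A_n$ with $m \leq n$, and let $\alpha = [x,y]_X$. If $m = n$ then $\alpha \subset N_K(A_m)$ by direct application of quasiconvexity. Otherwise I would form the geodesic quadrilateral with vertices $x, q_m, q_{n-1}, y$ (with $q_m = q_{n-1}$ producing a degenerate triangle in the case $n = m+1$), whose sides are $\alpha$, some geodesic $[x, q_m]_X$, the $\gamma$-subsegment from $q_m$ to $q_{n-1}$, and some geodesic $[q_{n-1}, y]_X$. By Lemma \ref{slim polygon}, each point of $\alpha$ lies within $2\delta$ of the union of the other three sides. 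Now $[x, q_m]_X \subset N_K(A_m)$ since $x, q_m \in A_m$; similarly $[q_{n-1}, y]_X \subset N_K(A_n)$; and the $\gamma$-side lies in $N_K(\cup_i A_i)$ by the first step. Therefore $\alpha \subset N_{2\delta + K}(\cup_i A_i)$, and we may take $D_{\ref{quasi-convex union}} = 2\delta + K$.

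There is no serious obstacle: the slim-polygon inequality does the heavy lifting, and the only mildly subtle point is the first step, which compensates for the fact that the hypothesis provides the points $q_i$ without any monotonicity along $\gamma$. The constant does not depend on how many sets $A_i$ separate $x$ from $y$ because the entire ``middle'' of the path is absorbed into a single $\gamma$-geodesic side of the quadrilateral, rather than being traversed set-by-set.
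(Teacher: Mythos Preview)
Your proof is correct and follows essentially the same approach as the paper's: pick points $q_i\in A_i\cap A_{i+1}\cap\gamma$, observe that the $\gamma$-segment between any two of them lies in $N_K(\cup_i A_i)$, then use $2\delta$-slimness of a quadrilateral with vertices $x,q_m,q_{n-1},y$ to get the constant $2\delta+K$. Your version is in fact slightly more careful on two points: you explicitly justify why non-monotonicity of the $q_i$'s along $\gamma$ does not matter, and by using $q_{n-1}$ rather than $q_n$ you avoid the edge case where $n$ is the last index of the sequence (so that $q_n$ need not exist).
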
 
\proof Let $x_i\in A_i\cap A_{i+1}\cap \gamma$ for all $i$. For all $i\leq j$, let $[x_i,x_j]$ denote the segment of $\gamma$
from $x_i$ to $x_j$. Clearly $[x_i, x_{i+1}]\subset N_K(A_{i+1})$ for all $i$ and hence 
$[x_i,x_j]\subset N_K(\cup_{i+1\leq k\leq j+1} A_k)\subset N_K(\cup_k A_k)$ for all $i\leq j$.
Now, given $x \in A_i, y\in A_j$, $i\leq j$ we have 
$[x,y]\subset N_{2\delta}([x,x_i]\cup [x_i,x_j]\cup [x_j,y])$ since clearly geodesic quadrilaterals
in a $\delta$-hyperbolic metric space are $2\delta$-slim by Lemma \ref{slim polygon}.
Hence, $[x,y]\subset N_{2\delta+K}(\cup_k A_k)$. Hence, we may choose 
$D_{\ref{quasi-convex union}}=2\delta+K$.
\qed

\smallskip
\subsection{Gromov boundary}
Now, we briefly recall some basic facts about Gromov boundary of hyperbolic spaces. 
For more details see \cite{GhH},\cite{bridson-haefliger}. 
Let $X$ be a hyperbolic geodesic metric space. Two (quasi)geodesic rays $\alpha,\beta$ are said to be
{\em asymptotic} if the Hausdorff distance between $\alpha$ and $\beta$ is finite. This gives an equivalence 
relation on the set $Geo(X)$ of all geodesic rays (resp. on the set $QGeo(X)$ of all quasigeodesics) in $X$. 
The equivalence class of a (quasi)geodesic ray $\alpha$ is denoted by $\alpha(\infty)$. We denote the set of 
all equivalence classes of (quasi)geodesic rays by $\partial X$ (resp. $\partial_q X$) and call them the {\em geodesic}
(resp. {\em quasigeodesic}) {\em boundary} of $X$.
If $X$ is a proper hyperbolic space then $\bar{X}:=X\cup \partial X$ is a compact metrizable space for 
a natural topology, in which $X$ is an open subset of $\bar{X}$.
However, since the spaces that we consider later could also be non-proper we shall use the following definition.

We recall that for any metric space $Z$ and points $z, z_1, z_2\in Z$, the Gromov product of $z_1,z_2$ with respect to
$z$ is the number $\frac{1}{2}(d(z,z_1)+d(z,z_2)-d(z_1,z_2))$ and it is denoted by $(z_1.z_2)_z$.

\begin{defn}\cite[Definition 4.1]{short-group}
Suppose $X$ is a hyperbolic metric space. A sequence of points $\{x_n\}$ in $X$ is said to converge
to infinity if $\lim_{m,n\map \infty} (x_m.x_n)_x=\infty$ for some (any) $x\in X$. 
\end{defn}
Let $S_{\infty}(X)$ denote the set of all sequence of points in $X$ which converge to infinity. On this set one defines
an equivalence relation by setting $\{x_n\}\sim \{y_n\}$ if and only if $\lim_{m,n\map \infty} (x_m.y_n)_x=\infty$. 
The following is a very basic lemma. See \cite{short-group} for a proof.

\begin{lemma}\label{lemma: seq bdry 1}
(1) If $\{x_n\}\in S_{\infty}(X)$ and $\{x_{n_k}\}$ is a subsequence of $\{x_{n_k}\}$ then $\{x_{n_k}\}\in S_{\infty}(X)$
and $\{x_n\}\sim \{x_{n_k}\}$.

(2) If $\{x_n\}, \{y_n\}\in S_{\infty}(X)$ then $\{x_n\}\sim \{y_n\}$ if and only if $\lim_{n\map \infty} (x_n.y_n)_x=\infty $.
\end{lemma}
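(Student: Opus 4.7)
The plan is to prove both parts directly from the definition, using the standard $\delta$-hyperbolic inequality for Gromov products: $(a.b)_x \geq \min\{(a.c)_x, (b.c)_x\} - \delta$ for all points $a,b,c \in X$.

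For part (1), the fact that $\{x_{n_k}\}$ itself lies in $S_\infty(X)$ is immediate: since $n_k \to \infty$, the condition $(x_{n_k}.x_{n_l})_x \to \infty$ as $k,l \to \infty$ is just a restatement of $(x_m.x_n)_x \to \infty$ along a pair of subsequences of indices. For the equivalence $\{x_n\} \sim \{x_{n_k}\}$, we must show $(x_n.x_{n_k})_x \to \infty$ as $n,k \to \infty$. Given $N$, choose $M$ such that $m, m' \geq M$ implies $(x_m.x_{m'})_x \geq N$, which exists by hypothesis. Then for $n \geq M$ and $k$ large enough that $n_k \geq M$, we have $(x_n.x_{n_k})_x \geq N$, as required.

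For part (2), the forward direction is trivial: the hypothesis $\{x_n\} \sim \{y_n\}$ means $\lim_{m,n \to \infty}(x_m.y_n)_x = \infty$, and restricting to the diagonal $m = n$ gives $\lim_{n \to \infty}(x_n.y_n)_x = \infty$. For the converse, assume both $\{x_n\}, \{y_n\} \in S_\infty(X)$ and $\lim_n (x_n.y_n)_x = \infty$. By the $\delta$-hyperbolic inequality applied with $a = x_m$, $b = y_n$, $c = x_n$, we get
\[
(x_m.y_n)_x \geq \min\{(x_m.x_n)_x,\, (x_n.y_n)_x\} - \delta.
\]
Given any $N$, choose $M$ large enough that both $(x_m.x_n)_x \geq N + \delta$ for all $m, n \geq M$ (using $\{x_n\} \in S_\infty(X)$) and $(x_n.y_n)_x \geq N + \delta$ for all $n \geq M$ (using the hypothesis). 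Then for $m, n \geq M$ we obtain $(x_m.y_n)_x \geq N$, so $\lim_{m,n \to \infty}(x_m.y_n)_x = \infty$, i.e.\ $\{x_n\} \sim \{y_n\}$.

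There is no serious obstacle here; the only subtlety is keeping track of which sequence plays the role of the intermediate point $c$ in the four-point inequality, and the choice $c = x_n$ with $m,n \to \infty$ is the natural one. No properness of $X$ is needed, so the argument works in the generality stated in the excerpt.
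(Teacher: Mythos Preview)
Your proof is correct. The paper does not actually give its own proof of this lemma; it simply writes ``The following is a very basic lemma. See \cite{short-group} for a proof.'' Your direct argument via the four-point inequality for Gromov products is the standard one and is exactly what one would find in that reference, so there is nothing to compare.
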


\begin{defn}[{\bf Sequential boundary}]
The sequential boundary $\partial_s X$ of a hyperbolic metric space $X$ is defined to be $S_{\infty}(X)/\sim$.
\end{defn}
If $\{x_n\}\in S_{\infty}(X)$ then the equivalence class of $\{x_n\}$ will be denoted by $[\{x_n\}]$.
If $\xi=[\{x_n\}]\in \partial_s X$ then we say $x_n$ converges to $\xi$.
Here are some of the basic facts about boundaries of hyperbolic spaces. For the  parts (1) and (3) of the Lemma below see
\cite[Chapter III.H]{bridson-haefliger}, and for the part (2) see \cite[Lemma 2.4]{mahan-sardar}.

\begin{lemma}\label{bdry basic prop}
	Suppose $X$ is a hyperbolic metric space.
\begin{enumerate}
\item 
Given a quasigeodesic $\alpha:[0,\infty)\map X$, the sequence $\{\alpha(n)\}$ converges to infinity.
{\em (We denote the equivalence class of $\{\alpha(n)\}$ also by $\alpha(\infty)$. 
We say $\alpha$ joins $\alpha(0)$ to $\alpha(\infty)$.)}
This gives rise to an injective map $\partial_q X\map \partial_s X$.
\item Given $\delta\geq 0$ there is a constant 
$k_{\ref{bdry basic prop}}=k_{\ref{bdry basic prop}}(\delta)$ depending only on $\delta$ such that
the following hold:

If $X$ is $\delta$-hyperbolic metric space then (i) for any $x_0\in X$ and any $\xi\in \partial_s X$ 
there is a $k_{\ref{bdry basic prop}}$-quasigeodesic ray in $X$ joining $x_0$ to $\xi$. 
In particular, the map $\partial_q X\map \partial_s X$ mentioned above is surjective.

(ii) For all $\xi_1\neq \xi_2\in \partial_s X$ there is a 
$k_{\ref{bdry basic prop}}$-quasigeodesic line $\gamma$ in $X$ joining $\xi_1,\xi_2$, i.e. such that
$\xi_1$ is the equivalence class of $\{\gamma(-n)\}$ and $\xi_2$  is the equivalence class of $\{\gamma(n)\}$.
\item If $X, Y$ are hyperbolic metric spaces and $f:Y\map X$ is a qi embedding then $f$ induces an injective map
$\partial f:\partial_s Y\map \partial_s X$. This map is functorial:

(i) If $\iota:X\map X$ is the identity map then $\partial \iota$ is the identity map on $\partial_s X$.

(ii) If $g:Z\map Y$ and $f:Y\map X$ are qi embeddings of hyperbolic metric spaces then $\partial f\circ \partial g=\partial (f\circ g)$.
\end{enumerate}
\end{lemma}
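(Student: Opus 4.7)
\textbf{Proof plan for Lemma \ref{bdry basic prop}.}

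For part (1), I would begin by showing that along any $(\lambda,\epsilon)$-quasigeodesic ray $\alpha$ the Gromov product grows. Using Theorem \ref{stability} each geodesic $[\alpha(m),\alpha(n)]$ lies within $D_{\ref{stability}}$-Hausdorff distance of the subarc of $\alpha$ between $\alpha(m)$ and $\alpha(n)$; combining this with the quasigeodesic inequality gives a lower bound of the form $(\alpha(m).\alpha(n))_{\alpha(0)}\geq \tfrac{1}{\lambda}\min(m,n)-C$ where $C=C(\lambda,\epsilon,\delta)$, so $\{\alpha(n)\}\in S_\infty(X)$. The map $\partial_q X\to \partial_s X$ is well-defined because two asymptotic quasigeodesic rays clearly give equivalent sequences. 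For injectivity, if $[\{\alpha(n)\}]=[\{\beta(n)\}]$, then $(\alpha(n).\beta(n))_{x_0}\to\infty$, and the usual thin-triangle argument combined with Theorem \ref{stability} forces $\alpha$ and $\beta$ to be at bounded Hausdorff distance, hence to represent the same point of $\partial_q X$.

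For part (2)(i), fix $\xi=[\{x_n\}]\in\partial_s X$ and geodesic segments $\gamma_n=[x_0,x_n]$. The key input is the standard fact that in a $\delta$-hyperbolic space, if $(x_m.x_n)_{x_0}\geq R$, then the initial subsegments of $\gamma_m$ and $\gamma_n$ of length $R-O(\delta)$ fellow-travel at distance $O(\delta)$. Passing to a subsequence $\{x_{n_k}\}$ with $(x_{n_k}.x_{n_{k+1}})_{x_0}$ growing fast, I choose on each $\gamma_{n_k}$ a point $p_k$ at distance roughly $k$ from $x_0$ and lying on $N_{O(\delta)}(\gamma_{n_{k+1}})$; the resulting sequence $\{p_k\}$ satisfies the hypotheses of Lemma \ref{concat geod}, so concatenating geodesics $[p_k,p_{k+1}]$ produces a quasigeodesic ray $\alpha$ with $\alpha(\infty)=\xi$, and the quasigeodesic constant depends only on $\delta$. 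For part (2)(ii), given $\xi_1\neq\xi_2$ pick $\{x^{(1)}_n\}$ and $\{x^{(2)}_n\}$ representing them; bounded Gromov products $(x^{(1)}_m.x^{(2)}_n)_{x_0}$ together with hyperbolicity show the geodesics $[x^{(1)}_m,x^{(2)}_n]$ pass uniformly close to a bounded region, and the same concatenation device applied in both directions produces the required $k_{\ref{bdry basic prop}}$-quasigeodesic line. Surjectivity of $\partial_q X\to\partial_s X$ is immediate from (i).

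For part (3), let $f:Y\to X$ be a $k$-qi embedding. The inequality $|(f(y).f(y'))_{f(y_0)}-(y.y')_{y_0}|\leq C(k)$ (obtained directly from the definition of Gromov product and the two-sided qi inequality) implies $\{f(y_n)\}\in S_\infty(X)$ whenever $\{y_n\}\in S_\infty(Y)$ and that $\sim$-equivalent sequences in $Y$ map to $\sim$-equivalent sequences in $X$; this yields a well-defined map $\partial f:\partial_sY\to\partial_sX$. Injectivity follows from the same estimate, reading it in reverse: if $(f(y_n).f(y'_n))_{f(y_0)}\to\infty$ then so does $(y_n.y'_n)_{y_0}$. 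Functoriality (i) and (ii) are then immediate from the definitions, since the identity map induces the identity on representing sequences and composition of qi embeddings is a qi embedding applied pointwise on representatives.

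The only real obstacle is the non-properness of $X$ in part (2)(i): one cannot invoke Arzel\`a--Ascoli on the geodesics $[x_0,x_n]$. I expect to resolve this entirely by the concatenation-of-geodesics construction enabled by Lemma \ref{concat geod}, which is the correct replacement for a compactness argument in the non-proper setting.
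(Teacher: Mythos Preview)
The paper does not give its own proof of this lemma: it simply records the statement as standard and refers to \cite[Chapter III.H]{bridson-haefliger} for parts (1) and (3) and to \cite[Lemma 2.4]{mahan-sardar} for part (2). So there is no ``paper's proof'' to compare against, and your outline---particularly the concatenation-of-geodesics construction via Lemma \ref{concat geod} to handle non-proper $X$ in (2)(i)---is exactly the kind of argument those references carry out.

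There is, however, one genuine slip in your treatment of part (3). The inequality
\[
\bigl|(f(y).f(y'))_{f(y_0)}-(y.y')_{y_0}\bigr|\leq C(k)
\]
is \emph{false} for a general $k$-qi embedding: a qi embedding can distort distances by a multiplicative factor $k$, and the Gromov product inherits that multiplicative distortion (think of $f(t)=kt$ on $\RR$, where the Gromov product scales exactly by $k$). What one actually gets from the qi inequalities, together with stability of quasigeodesics and the approximation $(a.b)_p\approx d(p,[a,b])$, is an estimate of the form
\[
\tfrac{1}{k}(y.y')_{y_0}-C\;\leq\;(f(y).f(y'))_{f(y_0)}\;\leq\;k\,(y.y')_{y_0}+C
\]
with $C=C(k,\delta_X,\delta_Y)$. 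This weaker inequality is still entirely sufficient for your purposes: it shows that $\{y_n\}\in S_\infty(Y)$ iff $\{f(y_n)\}\in S_\infty(X)$, that equivalence is preserved, and that $\partial f$ is injective. So the architecture of your argument for (3) is fine; only the stated form of the controlling inequality needs correcting.
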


\smallskip

{\bf Topology on $X\cup \partial_s X$.}

\smallskip
There is a natural way to put a Hausdorff topology on $X\cup \partial_s X$. The reader is referred to \cite[Definition 4.7]{short-group}
for details. We shall only include the following basic facts that we are going to need later.

(1) (see \cite[Lemma 4.6(2)]{short-group})
If $\{x_n\}$ is a sequence in $X$ and $\xi\in \partial_s X$ then  $x_n\map \xi$ if and only if $\{x_n\}$ converges to infinity
and $\xi=[\{x_n\}]$.

(2) (see \cite[Lemma 4.6(4)]{short-group}) If $\{\xi_n\}$ is a sequence in $\partial_s X$ and $\xi\in \partial_s X$. Then $\xi_n\map \xi$ if and only if the following holds:

Suppose $\xi_k$ is the equivalence class of $\{ x^k_n\}$ and $\xi$ is the equivalence class of $\{y_n\}$ then
$\lim_{k\map \infty} (\liminf_{m,n\map \infty}(x^k_m.y_n)_x)=\infty$ for any $x\in X$.

\begin{comment}
One can easily check using the results in \cite[Chapter 4]{short-group} that the following defines closed sets for the topology
mentioned there.

\begin{defn}
Suppose $A\subset \partial_s X$. We say that $A$ is a closed set if for any sequence $\xi_n$ in $\partial_s X$, 
$\xi_n\map \xi\in \partial_s X$ implies $\xi\in A$.
\end{defn}

\RED{State necessary lemmas.}
\end{comment}

The following lemma gives a geometric criteria for convergence and is well known among experts.
See \cite[Lemma 2.41]{mbdl2} for a proof.
\begin{lemma}\label{convg criteria}
For all $\delta\geq 0$ and $k\geq 1$ the following holds:\\
Suppose $\{x_n\}$, $\{y_n\}$ are sequences in a $\delta$-hyperbolic metric space $X$ and $\xi\in \partial_s X$.
Let $\alpha_{m,n}$ be a $k$-quasigeodesic joining $x_m, x_n$, and let $\beta_{m,n}$ be a $k$-quasigeodesic 
joining $x_m,y_n$ for all $m,n\in \NN$. Let $\gamma_n$ be a $k$-quasigeodesic ray
joining $x_n$ to $\xi$ for all $n\in \NN$. Let $x_0\in X$ be an arbitrary fixed point. Then

(0) $\{x_n\}$ converges to infinity if and only if $\lim_{m,n\map \infty} d(x_0, \alpha_{m,n})=\infty$.

(1) If $\{x_n\}, \{y_n\}$ both converge to infinity then $\{x_n\}\sim \{y_n\}$ if and only if
$\lim_{m,n\map \infty} d(x_0, \beta_{m,n})=\infty$.

(2) $x_n\map \xi$ if and only if $\lim_{n\map \infty} d(x_0, \gamma_n)=\infty$.
\end{lemma}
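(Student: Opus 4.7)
The whole lemma rests on the following uniform estimate, which I would establish as a preliminary: for any $\delta$-hyperbolic space $X$, any $k$-quasigeodesic $\alpha$ joining points $y,z\in X$, and any base point $x_0\in X$,
\[
|(y.z)_{x_0} - d(x_0,\alpha)| \leq C_1(\delta,k).
\]
The case of a geodesic $\alpha$ is classical (and follows from $\delta$-slimness by analyzing the internal points of the triangle $\triangle(x_0,y,z)$), and the quasigeodesic case then follows from Theorem \ref{stability}, which supplies $Hd([y,z],\alpha) \leq D_{\ref{stability}}(\delta,k,k)$.

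Given this estimate, \textbf{part (0)} is immediate from the definition of convergence to infinity. For \textbf{part (1)}, Lemma \ref{lemma: seq bdry 1}(2) reduces $\{x_n\}\sim\{y_n\}$ to $\lim_n(x_n.y_n)_{x_0}=\infty$, which by the estimate translates to $\lim_n d(x_0,\beta_{n,n})=\infty$. To upgrade this diagonal limit to the two-index limit $\lim_{m,n}d(x_0,\beta_{m,n})=\infty$, I would invoke the standard $\delta$-hyperbolic four-point inequality $(a.c)_{x_0}\geq\min\{(a.b)_{x_0},(b.c)_{x_0}\}-\delta$ with $a=x_m, b=x_n, c=y_n$: both right-hand terms tend to $\infty$ (the first because $\{x_n\}\in S_\infty$, the second by $\sim$), forcing $(x_m.y_n)_{x_0}\to\infty$ as $m,n\to\infty$. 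The reverse implication is trivial by setting $m=n$.

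For \textbf{part (2)} I would fix a reference $k$-quasigeodesic ray $\sigma\colon[0,\infty)\to X$ from $x_0$ to $\xi$, available by Lemma \ref{bdry basic prop}(2)(i) after possibly enlarging $k$ to $\max\{k,k_{\ref{bdry basic prop}}(\delta)\}$. Then $\{\sigma(l)\}_l\in S_\infty(X)$ represents $\xi$ by Lemma \ref{bdry basic prop}(1), so the topological criterion for convergence says $x_n\to\xi$ iff $\{x_n\}\in S_\infty$ and $\{x_n\}\sim\{\sigma(l)\}$. The same four-point argument as in (1) shows that this is equivalent to $\lim_{n,l}d(x_0,\tau_{n,l})=\infty$, where $\tau_{n,l}$ is any $k$-quasigeodesic from $x_n$ to $\sigma(l)$ (the $\{x_n\}\in S_\infty$ condition itself emerges from the same inequality $(x_m.x_n)_{x_0}\geq\min\{(x_m.\sigma(l))_{x_0},(\sigma(l).x_n)_{x_0}\}-\delta$ for $l$ large).

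\textbf{The main obstacle} is then to convert this double limit into the single limit $\lim_n d(x_0,\gamma_n)=\infty$. The geometric input required is asymptotic fellow-traveling of the two $k$-quasigeodesic rays $\gamma_n$ and $\sigma$, which share the ideal endpoint $\xi$. Applying Theorem \ref{stability} to the piecewise quasigeodesic formed by $\gamma_n$, a short bridge across the fellow-travelled region, and the tail of $\sigma$, and comparing it with $\tau_{n,l}$, one obtains
\[
d(x_0,\tau_{n,l}) \;=\; \min\{d(x_0,\gamma_n),\,d(x_0,\sigma|_{[l,\infty)})\} \pm C_2(\delta,k).
\]
Since $d(x_0,\sigma|_{[l,\infty)}) \geq l/k - 2k \to \infty$ as $l\to\infty$ (recall $\sigma(0)=x_0$), for $l$ large depending on $n$ the minimum is realized by $d(x_0,\gamma_n)$, and a routine unpacking of the two-index limit yields the equivalence $\lim_{n,l}d(x_0,\tau_{n,l})=\infty \Leftrightarrow \lim_n d(x_0,\gamma_n)=\infty$, finishing (2).
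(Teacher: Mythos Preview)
The paper does not supply its own proof of this lemma; it simply states that the result is well known and refers to \cite[Lemma 2.41]{mbdl2}. So there is no in-paper argument to compare against, and your outline is a correct and standard one.

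Two minor remarks. In part~(1), the detour through the diagonal limit via Lemma~\ref{lemma: seq bdry 1}(2) is unnecessary: the very definition of $\{x_n\}\sim\{y_n\}$ is already the two-index condition $\lim_{m,n}(x_m.y_n)_{x_0}=\infty$, so your preliminary estimate yields the two-index statement directly without any four-point argument. In part~(2), the displayed two-sided formula is slightly stronger than what your ``piecewise quasigeodesic'' sentence actually justifies, and stronger than you need. The clean route is to invoke slimness of the ideal quasigeodesic triangle with vertices $x_n,\sigma(l),\xi$ and sides $\tau_{n,l}$, $\gamma_n$, $\sigma|_{[l,\infty)}$ (which is standard: asymptotic quasigeodesic rays to the same boundary point have finite Hausdorff distance, then apply Theorem~\ref{stability}). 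Slimness gives three inequalities of the shape
\[
d(x_0,\text{one side})\;\ge\;\min\{d(x_0,\text{other two sides})\}-C,
\]
and together with $d(x_0,\sigma|_{[l,\infty)})\to\infty$ as $l\to\infty$ these yield both directions of the equivalence by exactly the unpacking you describe, without needing the full $\pm C_2$ identity.
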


The following lemma gives yet another criteria for convergence.

\begin{lemma}\label{convg criteria 2}
For all $\delta\geq 0$ and $k\geq 1$ there is a constant $D_{\ref{convg criteria 2}}=D_{\ref{convg criteria 2}}(\delta,k)$ 
such that following holds:\\
Suppose $X$ is a $\delta$-hyperbolic metric space, $x_0\in X$, $\xi\in \partial_s X$ and
$\{x_n\}$ is a sequence of points in $\bar{X}$. Suppose $\alpha$ is a $k$-quasigeodesic ray
converging to $\xi$ and $\alpha_n$ is a $k$-quasigeodesic segment joining $x_0$ to $x_n$. 
Then $x_n\map \xi$ if and only if for all $R\geq 0$ there is $N\in \NN$ such that 
$Hd(\alpha\cap B(x_0; R), \alpha_n\cap  B(x_0; R))\leq D_{\ref{convg criteria 2}}+d(x_0,\alpha(0))$ for all $n\geq N$.
\end{lemma}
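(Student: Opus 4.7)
The plan is to mediate between $\alpha$ and $\alpha_n$ by a third $k$-quasigeodesic $\beta$ from $x_0$ to $\xi$, whose existence is provided by Lemma \ref{bdry basic prop}(2)(i) after possibly enlarging $k$. The main geometric inputs are Theorem \ref{stability} and the uniform slimness of (possibly ideal) quasigeodesic triangles in a $\delta$-hyperbolic space. As a first step, applying slimness to the ideal triangle with vertices $x_0, \alpha(0), \xi$ and sides $[x_0,\alpha(0)]$, $\beta$, and $\alpha$ yields a constant $C_1 = C_1(\delta, k)$ such that $Hd(\alpha \cap B(x_0; R), \beta \cap B(x_0; R)) \leq C_1 + d(x_0, \alpha(0))$ for every $R \geq 0$. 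Thus, up to an additive constant, the statement about $\alpha$ and $\alpha_n$ reduces to the same statement about $\beta$ and $\alpha_n$.

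For both directions I consider the (possibly bi-ideal) quasigeodesic triangle $T_n$ with vertices $x_0, x_n, \xi$ and sides $\alpha_n$, $\beta$, and a $k$-quasigeodesic $\gamma_n$ from $x_n$ to $\xi$ (bi-infinite if $x_n \in \partial_s X$). By Theorem \ref{stability} and slim ideal triangles, $T_n$ is uniformly $C_2 = C_2(\delta,k)$-slim, and the familiar tripod picture gives that the initial fellow-travel of $\alpha_n$ and $\beta$ from $x_0$ has length, up to $C_2$, equal to $d(x_0, \gamma_n)$.

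For the forward direction, assume $x_n \to \xi$. By Lemma \ref{convg criteria}(2), $d(x_0, \gamma_n) \to \infty$; hence for $n$ large, $\gamma_n$ is disjoint from $B(x_0; R + C_2 + 1)$. Slimness of $T_n$ then forces $Hd(\alpha_n \cap B(x_0; R), \beta \cap B(x_0; R)) \leq C_2 + 1$, and combining with the estimate relating $\alpha$ and $\beta$ yields the claimed bound with $D_{\ref{convg criteria 2}} := C_1 + C_2 + 1$. For the reverse direction, assume the Hausdorff hypothesis. Then the triangle inequality and the estimate between $\alpha$ and $\beta$ give that $\alpha_n$ and $\beta$ lie at Hausdorff distance at most $D_{\ref{convg criteria 2}} + C_1 + 2 d(x_0, \alpha(0))$ on $B(x_0; R)$ for $n \geq N(R)$, so the initial fellow-travel of $\alpha_n$ and $\beta$ from $x_0$ extends to distance at least $R - C_3$. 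By the tripod picture, $d(x_0, \gamma_n) \geq R - C_3 - C_2$; since $R$ is arbitrary, $d(x_0, \gamma_n) \to \infty$, and Lemma \ref{convg criteria}(2) gives $x_n \to \xi$.

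The main technical obstacle is handling $x_n \in \partial_s X$, in which case $T_n$ is a bi-ideal triangle with two vertices at infinity: both uniform slimness and the tripod picture extend to this case, and the bi-ideal analogue of Lemma \ref{convg criteria}(2) can be recovered by approximating $x_n$ by points of $X$ along $\alpha_n$ and passing to a limit. One further preliminary point is that the Hausdorff hypothesis in the reverse direction forces $\alpha_n$ to be unbounded for $n$ large, which is necessary for $\alpha_n \cap B(x_0; R)$ to match $\alpha \cap B(x_0; R)$ in diameter; this is immediate by contradiction, since $\alpha$ itself is unbounded.
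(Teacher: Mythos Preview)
The paper does not actually give a proof of this lemma: it says the idea is ``very similar to that of Lemma 1.15 and also Lemma 3.3 of \cite[Chapter III.H]{bridson-haefliger}'' and skips the details as standard. Your proposal is correct and is precisely the standard argument those references encode --- reducing to a quasigeodesic $\beta$ based at $x_0$, then using slimness of (possibly ideal) quasigeodesic triangles together with Lemma~\ref{convg criteria}(2) to translate the fellow-travelling condition into $d(x_0,\gamma_n)\to\infty$ --- so there is nothing to contrast.

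One small remark: in the reverse direction your constant $C_3$ implicitly depends on $d(x_0,\alpha(0))$ (through the $2d(x_0,\alpha(0))$ term), not just on $\delta$ and $k$. This is harmless for the conclusion $x_n\to\xi$, since $\alpha(0)$ is fixed throughout the argument, but it is worth being aware that the constant $D_{\ref{convg criteria 2}}$ in the \emph{statement} absorbs this dependence via the explicit $+\,d(x_0,\alpha(0))$ term, whereas your intermediate constants do not.
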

Informally we shall refer to the conclusion of this lemma by saying that 
$x_n\map \xi$ if {\bf $\alpha_n$'s fellow travel $\alpha$ for a longer and longer time} as 
$n\map \infty$. The idea of the proof is very similar to that of Lemma 1.15 and also Lemma 3.3 of 
\cite[Chapter III.H]{bridson-haefliger}. Since this is very standard we skip it. One is also referred
to \cite[Lemma 2.40, Lemma 2.45(2)]{mbdl2}.

The following lemma gives one of the main tools to prove the main theorem of this paper.
\begin{lemma}\label{diagonal sequence}
Suppose $X$ is a $\delta$-hyperbolic metric space and $\xi\in \partial_s X$. Suppose 
for all $n\in \NN$ there is a sequence $\{x^n_k\}$ in $X$ with $x^n_k\map \xi$
as $k\map \infty$. Then there is a subsequence $\{m_n\}$ of the
sequence of natural numbers such that $x^n_{m_n}\map \xi$.
\end{lemma}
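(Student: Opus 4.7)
The plan is to carry out a standard diagonal extraction based on the Gromov-product characterization of convergence to a boundary point (Lemma \ref{lemma: seq bdry 1}(2)). The key observation is that one only needs to certify convergence against a single auxiliary sequence converging to $\xi$, and the triangular-type inequality for Gromov products in a $\delta$-hyperbolic space lets a single numerical bound at each stage propagate to the full double-limit one needs.

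First I would fix an arbitrary basepoint $x_0\in X$ and produce an auxiliary sequence $\{y_n\}$ in $X$ converging to $\xi$: take, for example, a $k_{\ref{bdry basic prop}}$-quasigeodesic ray $\gamma$ from $x_0$ to $\xi$ guaranteed by Lemma \ref{bdry basic prop}(2)(i) and set $y_n=\gamma(n)$. Since $\{y_n\}\in S_\infty(X)$ and $[\{y_n\}]=\xi$, we have $(y_m.y_n)_{x_0}\to\infty$ as $m,n\to\infty$. Next, for each fixed $n$, the assumption $x^n_k\to\xi$ as $k\to\infty$ combined with Lemma \ref{lemma: seq bdry 1}(2) gives $(x^n_k.y_n)_{x_0}\to\infty$ as $k\to\infty$. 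Hence I can recursively choose a strictly increasing sequence $m_1<m_2<\cdots$ of natural numbers such that
\[
(x^n_{m_n}.y_n)_{x_0}\;\geq\; n \qquad \text{for every } n\in\NN.
\]
The strict monotonicity of $\{m_n\}$ is free since the Gromov-product condition remains true for all sufficiently large indices.

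Now I would verify, using the $\delta$-hyperbolic inequality
\[
(a.c)_{x_0}\;\geq\;\min\{(a.b)_{x_0},(b.c)_{x_0}\}-\delta,
\]
that $\{x^n_{m_n}\}$ converges to infinity and represents $\xi$. Applying this twice (inserting $y_n$ and $y_{n'}$) gives
\[
(x^n_{m_n}.x^{n'}_{m_{n'}})_{x_0}\;\geq\;\min\bigl\{(x^n_{m_n}.y_n)_{x_0},\,(y_n.y_{n'})_{x_0},\,(y_{n'}.x^{n'}_{m_{n'}})_{x_0}\bigr\}-2\delta,
\]
and each of the three terms on the right tends to $\infty$ as $n,n'\to\infty$ (the outer two by the choice of $m_n$, the middle by $y_n\to\xi$). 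Thus $\{x^n_{m_n}\}\in S_\infty(X)$. A similar single insertion of $y_n$ yields
\[
(x^n_{m_n}.y_j)_{x_0}\;\geq\;\min\bigl\{(x^n_{m_n}.y_n)_{x_0},\,(y_n.y_j)_{x_0}\bigr\}-\delta,
\]
so $(x^n_{m_n}.y_j)_{x_0}\to\infty$ along the diagonal $n=j\to\infty$, which by Lemma \ref{lemma: seq bdry 1}(2) says $\{x^n_{m_n}\}\sim\{y_n\}$, i.e.\ $[\{x^n_{m_n}\}]=\xi$.

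There is no real obstacle here; this is a clean diagonal argument. The only point requiring mild care is making sure we exploit $x^n_k\to\xi$ in a quantitatively strong enough form at each stage (namely against the fixed reference sequence $\{y_n\}$, with the threshold growing to $\infty$) so that a single inequality at level $n$ is enough for the subsequent joint-limit application of the $\delta$-hyperbolic inequality. One can also require $m_n$ strictly increasing to honour the literal statement that $\{m_n\}$ be a subsequence of $\NN$.
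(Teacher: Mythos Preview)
Your overall diagonal strategy is correct, but one step is wrong as written. For each fixed $n$ the claim ``$(x^n_k.y_n)_{x_0}\to\infty$ as $k\to\infty$'' is false: the Gromov product $(a.b)_{x_0}$ is always at most $d(x_0,b)$, and here $y_n$ is a single fixed point of $X$, so $(x^n_k.y_n)_{x_0}\le d(x_0,y_n)<\infty$ for every $k$. What Lemma~\ref{lemma: seq bdry 1}(2) actually gives you, applied to the two sequences $\{x^n_l\}_l$ and $\{y_l\}_l$ (both representing $\xi$), is the diagonal statement $\lim_{l\to\infty}(x^n_l.y_l)_{x_0}=\infty$, not a limit in the first index against a frozen second index. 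Consequently the choice ``$(x^n_{m_n}.y_n)_{x_0}\ge n$'' is not justified as stated.

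The repair is painless: for each $n$ choose $m_n>m_{n-1}$ with $(x^n_{m_n}.y_{m_n})_{x_0}\ge n$, which the diagonal limit above does permit. Your two applications of the four-point inequality then go through verbatim with $y_{m_n}$ in place of $y_n$ (the middle term $(y_{m_n}.y_{m_{n'}})_{x_0}$ still tends to infinity since $\{y_j\}\in S_\infty(X)$), and one concludes $\{x^n_{m_n}\}\sim\{y_{m_n}\}\sim\{y_j\}$ via Lemma~\ref{lemma: seq bdry 1}. For comparison, the paper bypasses Gromov products entirely and uses the geometric criterion of Lemma~\ref{convg criteria}: it takes uniform quasigeodesic rays $\gamma_{n,k}$ from $x^n_k$ to $\xi$, selects $m_n$ with $d(x_0,\gamma_{n,m_n})>n$, and reads off $x^n_{m_n}\to\xi$ directly from that criterion.
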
 
\proof Let $k_0=k_{\ref{bdry basic prop}}(\delta)$.
By Lemma \ref{bdry basic prop}(2) for all $x\in X$ and 
$\eta\in \partial_s X$ there is a $k_0$-quasigeodesic ray joining $x$ to $\eta$. Let $x_0\in X$. 
Let $\gamma_{n,k}$ be a $k_0$-quasigeodesic ray joining $x^n_k$ to $\xi$ for all $n, k\in \NN$.
For all $n\in \NN$, $d(x_0,\gamma_{n,k})\map \infty$ as $k\map \infty$ by Lemma \ref{convg criteria}(3). 
Thus for all $n\in \NN$
we can find $m_n\in \NN$ such that $d(x_0, \gamma_{n,i})>n$ for all $i\geq m_n$.
Clearly, $x^n_{m_n}\map \xi$ by Lemma \ref{convg criteria}(3). \qed

\medskip
{\bf Limit sets}

\begin{defn}{\em (Limit set)}
Suppose $X$ is a hyperbolic metric space and $A\subset X$. Then the {\em limit set of $A$} in $\partial_s X$
is the set $\Lambda(A)$ of all points $\xi \in \partial_s X$ such that there is a sequence $\{a_n\}$ in $A$ converging
to $\xi$.
\end{defn}
The following lemma is easy. However, we include a proof for the sake of completeness.

\begin{lemma}\label{ray in qc}
Given $\delta\geq 0$ and $k\geq 0$ there is a constant $K_{\ref{ray in qc}}=K_{\ref{ray in qc}}(\delta, k)$ such that the following holds:\\
Suppose $X$ is a $\delta$-hyperbolic metric space and $A\subset X$ is a $k$-quasiconvex subset. Then for all
$\xi\in \Lambda(A)$ and $x\in A$ there is a $K_{\ref{ray in qc}}$-quasigeodesic ray $\gamma$ of $X$ contained in $A$ 
where $\gamma$ joins $x$ to $\xi$.
\end{lemma}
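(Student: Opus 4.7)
The plan is to construct the desired ray by concatenating uniformly short quasigeodesics inside $A$ between a carefully chosen sequence of points in $A$ that tracks a fixed quasigeodesic ray from $x$ to $\xi$ in $X$. First, using Lemma \ref{bdry basic prop}(2), fix a $k_0$-quasigeodesic ray $\gamma_0:[0,\infty)\map X$ from $x$ to $\xi$ with $k_0=k_{\ref{bdry basic prop}}(\delta)$.

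The first key step is to show that $\gamma_0\subset N_{D_1}(A)$ for some $D_1=D_1(\delta,k)$ independent of $\xi$. Indeed, since $\xi\in\Lambda(A)$, choose a sequence $\{b_n\}\subset A$ with $b_n\map \xi$, and let $\eta_n$ be a geodesic in $X$ from $x$ to $b_n$. By quasiconvexity, $\eta_n\subset N_k(A)$. By Lemma \ref{convg criteria 2} (combined with the stability Theorem \ref{stability} to pass between geodesics and $k_0$-quasigeodesics with the same endpoints), for every $R\geq 0$ we have $\gamma_0\cap B(x,R)\subset N_{D'}(\eta_n)$ for all large $n$, where $D'$ depends only on $\delta$ and $k_0$. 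Since $R$ is arbitrary and $\eta_n\subset N_k(A)$, this yields $\gamma_0\subset N_{D_1}(A)$ with $D_1=D'+k$.

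Next, choose a large constant $L=L(\delta,k)$ (to be fixed below). For each integer $n\geq 1$ pick $a_n\in A$ with $d(a_n,\gamma_0(nL))\leq D_1$, and set $a_0=x$. By Lemma \ref{dotted quasi-geodesic in quasi-convex set}, for every $n\geq 0$ there is a $K_1$-quasigeodesic $\beta_n$ in $X$ joining $a_n$ to $a_{n+1}$ with $\beta_n\subset A$, where $K_1=K_{\ref{dotted quasi-geodesic in quasi-convex set}}(k)$. Define $\gamma$ to be the concatenation $\beta_0\cdot\beta_1\cdot\beta_2\cdots$; by construction $\gamma$ is a path in $A$ starting at $x$.

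I will apply Lemma \ref{concat geod} to the sequence $\{a_n\}$ to conclude that $\gamma$ is a $K$-quasigeodesic of $X$ with $K=K(\delta,k)$. For each $n$, let $\alpha_n$ be a geodesic in $X$ from $x$ to $a_n$. By stability (Theorem \ref{stability}) applied to $\alpha_n$ and the $k_0$-quasigeodesic obtained by concatenating $\gamma_0|_{[0,nL]}$ with a geodesic from $\gamma_0(nL)$ to $a_n$, we get $Hd(\alpha_n,\gamma_0|_{[0,nL]})\leq D_2$ for some $D_2=D_2(\delta,k_0)$. Consequently each $a_i$ with $i\leq n$ lies within distance $D_1+D_2$ of $\alpha_n$; moreover, if $y_i$ denotes a nearest point of $\alpha_n$ to $a_i$, then $y_i$ lies within distance $D_1+2D_2$ of $\gamma_0(iL)$, and choosing $L$ larger than $2(D_1+2D_2)+k_0^2$ forces $d(x,y_{i+1})>d(x,y_i)$ along the geodesic $\alpha_n$. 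With these hypotheses verified, Lemma \ref{concat geod} gives the quasigeodesic constant $K$.

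Finally, $\gamma$ converges to $\xi$: the endpoints $a_n$ satisfy $d(a_n,\gamma_0(nL))\leq D_1$, and $\gamma_0(nL)\map \xi$, so by Lemma \ref{convg criteria} the sequence $\{a_n\}$ converges to $\xi$; since $\gamma$ passes through each $a_n$ and is a quasigeodesic, $\gamma(\infty)=\xi$. The main obstacle is the uniform containment $\gamma_0\subset N_{D_1}(A)$ together with the monotonicity clause of Lemma \ref{concat geod}; both are handled by fellow-traveling arguments once $L$ is chosen sufficiently large in terms of $\delta$ and $k$, so we may take $K_{\ref{ray in qc}}=K$.
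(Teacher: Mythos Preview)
Your argument is correct in outline and shares with the paper the key step: showing that a fixed $k_0$-quasigeodesic ray $\gamma_0$ from $x$ to $\xi$ lies in a uniform neighbourhood $N_{D_1}(A)$. The paper's argument for this containment uses Gromov products and slim triangles directly, while you invoke Lemma~\ref{convg criteria 2}; both work. The genuine difference is in how the ray inside $A$ is then produced. You sample $\gamma_0$ at widely spaced times, connect successive sample points by quasigeodesics in $A$ via Lemma~\ref{dotted quasi-geodesic in quasi-convex set}, and appeal to Lemma~\ref{concat geod} to certify the concatenation is a quasigeodesic. The paper avoids all of this: once $\gamma_0\subset N_D(A)$, it simply picks for every $t\in[0,\infty)$ a point $x_t\in A$ with $d(x_t,\gamma_0(t))\le D$ and observes that $t\mapsto x_t$ is a $(k_0+2D)$-quasigeodesic, being a bounded perturbation of the $k_0$-quasigeodesic $\gamma_0$. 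This is exactly the mechanism sketched after Lemma~\ref{dotted quasi-geodesic in quasi-convex set}, and it makes the sampling, the choice of $L$, and the monotonicity verification in Lemma~\ref{concat geod} unnecessary.

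One small technical point in your version: as stated, Lemma~\ref{concat geod} concerns concatenations of \emph{geodesics} $\beta_i$, not $K_1$-quasigeodesics. Your $\beta_n$'s are $K_1$-quasigeodesics inside $A$. This is easily patched---each $\beta_n$ joins points at uniformly bounded distance, hence has uniformly bounded diameter and lies within bounded Hausdorff distance of the geodesic $[a_n,a_{n+1}]$ by stability, so the concatenation of the $\beta_n$'s is a bounded perturbation of the concatenation of the $[a_n,a_{n+1}]$'s---but you should say so explicitly, or simply adopt the paper's one-line pointwise projection instead.
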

\proof Let $k_0=k_{\ref{bdry basic prop}}(\delta)$. 
We know by Lemma \ref{bdry basic prop}(2) that there is a $k_0$-quasigeodesic ray 
$\alpha:[0,\infty)\map X$ joining $x$ to $\xi$.
Now, since $\xi\in \Lambda(A)$ there is a sequence $\{x_n\}$ in $A$ such that $\lim_{n\map \infty}(x_n.\alpha(n))_x=\infty$.
Given any $p\in \alpha$ there is $N\in \NN$ such that 
$(x_n.\alpha(n))_x\geq d(x,p)+D_{\ref{stability}}(\delta, k_0,k_0)+k+\delta$ for all $n\geq N$.
Now, by stability of quasigeodesics there is a point $q\in [x,\alpha(N)]$ such that 
$d(p,q)\leq D_{\ref{stability}}(\delta, k_0,k_0)$. We look at the geodesic triangle $\bigtriangleup xx_N\alpha(N)$. 
Since $\bigtriangleup xx_N\alpha(N)$
is $\delta$-slim $q\in N_{\delta}([x,x_N]\cup [x_N, \alpha(N)]$. However if there is $q'\in [x_N, \alpha(N)]$
with $d(q,q')\leq \delta$ then 
$2(x_N\cdot\alpha(N))_x=d(x,x_N)+d(x,\alpha(N))-d(x_N,\alpha(N))\leq (d(x,q)+d(q,q')+d(q',x_N))+
	(d(x,q)+d(q,q')+d(q',\alpha(N)))-d(x_N,\alpha(N))
	=2d(x,q)+2d(q,q')\leq 2d(x,q)+2\delta.$
Thus $(x_N\cdot \alpha(N))_x\leq d(x,q)+\delta\leq d(x,p)+D_{\ref{stability}}(\delta, k_0,k_0)+\delta$ contradicting
the choice of $N$.
Hence, $q\in N_{\delta}([x,x_N])$. Finally, since $A$ is $k$-quasiconvex, $q\in N_{\delta+k}(A)$.
Hence, $p$ is contained in the $(D_{\ref{stability}}(\delta, k_0,k_0)+k+\delta)$-neighborhood of $A$.
Let $D=D_{\ref{stability}}(\delta, k_0,k_0)+k+\delta$. Then for all $t\in [0,\infty)$ there is a point $x_t\in A$
such that $d(x_t, \alpha(t))\leq D$. Clearly $t\mapsto x_t$ is a $(k_0+2D)$-quasigeodesic ray as required. 
Thus by choosing $K_{\ref{ray in qc}}=k_{0}+2D$ we are done. \qed


\begin{lemma}\label{limit set intersection}
Suppose $A$ is a (closed) subset of a hyperbolic metric space $X$ and $\gamma:[0,\infty)\map X$ is a quasigeodesic ray
in $X$. Suppose $a_n$ is a nearest point projection of $\gamma(n)$ on $A$ for all $n\in \NN$.
If the set $\{a_n\}$ is unbounded then $\gamma(\infty)\in \Lambda(A)$.

The converse is true if $A$ is quasiconvex in which case $\gamma$ is contained in a finite neighborhood of
$A$.
\end{lemma}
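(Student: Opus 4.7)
The plan is to prove the two directions separately, using the sequential description of the Gromov boundary together with Lemma \ref{lemma: seq bdry 1}(2). For the first direction, assume $\{a_n\}$ is an unbounded subset of $X$. Since $a_0$ is the nearest point of $A$ to $\gamma(0)$, we have $d(\gamma(0), a_0) \leq d(\gamma(0), a_n)$ for every $n$, so $\{d(\gamma(0), a_n)\}$ is also unbounded and I may extract a subsequence with $d(\gamma(0), a_{n_k}) \to \infty$. The key inequality comes from the projection property: $d(\gamma(n), a_n) \leq d(\gamma(n), a_0) \leq d(\gamma(n), \gamma(0)) + d(\gamma(0), a_0)$.

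Plugging this into the Gromov product gives $(a_{n_k} . \gamma(n_k))_{\gamma(0)} \geq \tfrac{1}{2}(d(\gamma(0), a_{n_k}) - d(\gamma(0), a_0))$, which tends to infinity. Since $\gamma$ is a quasigeodesic converging to $\gamma(\infty)$ we also have $(\gamma(n_k) . \gamma(n_l))_{\gamma(0)} \to \infty$; combining these two estimates via the standard ultrametric inequality in hyperbolic spaces yields $(a_{n_k} . a_{n_l})_{\gamma(0)} \to \infty$, so $\{a_{n_k}\}$ converges to infinity. Lemma \ref{lemma: seq bdry 1}(2) applied to $\{a_{n_k}\}$ and $\{\gamma(n_k)\}$ then gives $\{a_{n_k}\} \sim \{\gamma(n_k)\}$, hence $a_{n_k} \to \gamma(\infty)$ and $\gamma(\infty) \in \Lambda(A)$.

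For the converse, assume $A$ is quasiconvex and $\gamma(\infty) \in \Lambda(A)$. By Lemma \ref{ray in qc} there is a uniform quasigeodesic ray $\alpha$ contained in $A$ with $\alpha(\infty) = \gamma(\infty)$. It is standard that two quasigeodesic rays in a hyperbolic space with the same endpoint at infinity lie at finite Hausdorff distance from one another, a consequence of the stability of quasigeodesics (Theorem \ref{stability}) together with slimness of triangles whose vertices are placed far out along both rays. Hence $\gamma \subset N_D(\alpha) \subset N_D(A)$ for some $D \geq 0$, which is the finite-neighborhood assertion; in particular $d(\gamma(n), a_n) = d(\gamma(n), A) \leq D$ for all $n$, and since $d(\gamma(0), \gamma(n)) \to \infty$ this forces $\{a_n\}$ to be unbounded. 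The main obstacle is the first direction, where unboundedness of the projections must be converted into boundary convergence with no convexity hypothesis on $A$; the single projection inequality $d(\gamma(n), a_n) \leq d(\gamma(n), a_0)$ is exactly what unlocks the Gromov product estimate above.
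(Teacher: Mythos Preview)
Your proof is correct and follows essentially the same strategy as the paper. For the forward direction, both arguments rest on the single nearest-point inequality $d(\gamma(n),a_n)\leq d(\gamma(n),a)$ for any fixed $a\in A$; the paper packages this as showing $d(x,[a_n,\gamma(n)])\to\infty$ by contradiction, while you compute the Gromov product $(a_{n_k}\cdot\gamma(n_k))_{\gamma(0)}$ directly---these are equivalent viewpoints in a hyperbolic space. Your treatment of the converse (invoke Lemma~\ref{ray in qc}, then use that asymptotic quasigeodesic rays lie at finite Hausdorff distance) is exactly what the paper does.
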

\proof 
Let $b_n=\gamma(n)$. Let $x\in A$. Without loss of generality we shall assume that $d(x,a_n)\map \infty$.
We claim that $d(x, [a_n,b_n])\map \infty$. Suppose not.
Then there is $R\geq 0$ such that $d(x,[a_n,b_n])\leq R$ for infinitely many $n$. For any such
$n$, let $x_n\in [a_n,b_n]$ such that $d(x,x_n)\leq R$. Since $a_n$ is a nearest point projection of
$b_n$ on $A$ and $x\in A$, we must have $d(x_n,a_n)\leq d(x_n,x)\leq R$. It follows that $d(x,a_n)\leq 2R$.
This gives a contradiction to the assumption that $d(x,a_n)\map \infty$ and completes the proof. 

For the converse, suppose $X$ is $\delta$-hyperbolic and $A$ is $k$-quasiconvex in $X$. Let $x\in A$. Then by Lemma \ref{ray in qc} 
there exists a $K_{\ref{ray in qc}}(\delta, k)$-quasigeodesic ray, say $\alpha$, of $X$ contained in $A$ which join 
$x$ to $\gamma(\infty)$. Since $\{\alpha(n)\}\sim \{\gamma(n)\}$, it easily follows from Lemma
\ref{convg criteria}(2) coupled with stability of quasigeodesics that $Hd(\alpha,\gamma)<\infty$.
Thus $\gamma$ is contained in a finite neighborhood of $A$. Thus the sequence $\{a_n\}$ is unbounded. \qed


\subsection{Cannon-Thurston maps}\label{section: CT maps}
The following proposition is very standard.

\begin{prop}\textup{(\cite[Theorem 5.38]{vaisala}, \cite[Theorem 7.2.H]{gromov-hypgps}, \cite[III.H, Theorem 3.9]{bridson-haefliger})}\label{ct example1}
If $f:X\map Y$ is a qi embedding where $X,Y$ are hyperbolic spaces then the
map $\partial f:\partial X\map \partial Y$, as defined in Lemma \ref{bdry basic prop}(3)
is continuous. Moreover, if $X,Y$ are proper metric spaces then it is a closed
embedding.

If $f$ is quasiisometry then $\partial f$ is a homeomorphism.
\end{prop}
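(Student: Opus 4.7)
The plan is to establish the three assertions in order: continuity of $\partial f$, the closed embedding property in the proper case, and the homeomorphism assertion. Throughout, we can work with the sequential boundary: when $X, Y$ are proper, $\partial X = \partial_s X$ agrees with the standard Gromov boundary (and is compact), and when they are not proper we use $\partial_s X$ via Lemma \ref{bdry basic prop}.

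For continuity, I would use the sequential criterion together with Lemma \ref{subseq}. Fix $\xi \in \partial_s X$, a sequence $\xi_n \to \xi$, and a basepoint $x_0 \in X$; set $y_0 = f(x_0)$. By Lemma \ref{bdry basic prop}(2) choose uniform $k_0$-quasigeodesic rays $\alpha$ from $x_0$ to $\xi$ and $\alpha_n$ from $x_0$ to $\xi_n$. Since $f$ is a $(\lambda,\epsilon)$-qi embedding, $f\circ \alpha$ and $f\circ \alpha_n$ are uniform quasigeodesic rays in $Y$ starting at $y_0$, and by Lemma \ref{bdry basic prop}(1) and the functoriality in (3) they represent $\partial f(\xi)$ and $\partial f(\xi_n)$ respectively. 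By Lemma \ref{convg criteria 2}, for every $R\geq 0$ there exists $N$ such that $\alpha_n$ fellow-travels $\alpha$ on $B(x_0;R)$ for all $n\geq N$. Applying $f$ rescales lengths by a factor at least $1/\lambda$ and inflates Hausdorff distances by at most $\lambda$ (plus $\epsilon$), so $f\circ \alpha_n$ fellow-travels $f\circ \alpha$ on $B(y_0;R/\lambda-\epsilon)$ up to a constant depending only on $\lambda, \epsilon, D_{\ref{convg criteria 2}}$. As $R \to \infty$, the reverse direction of Lemma \ref{convg criteria 2} yields $\partial f(\xi_n) \to \partial f(\xi)$ in $\partial_s Y$. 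Since any subsequence of $\{\xi_n\}$ still converges to $\xi$, Lemma \ref{subseq} gives continuity of $\partial f$.

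For the closed embedding claim when $X, Y$ are proper, $\partial X$ is compact and $\partial Y$ is Hausdorff; a continuous injection (injectivity from Lemma \ref{bdry basic prop}(3)) from a compact space into a Hausdorff space is automatically a closed topological embedding. For the homeomorphism claim, when $f:X\to Y$ is a quasiisometry, standard coarse geometry provides a quasi-inverse $g:Y\to X$ (itself a qi embedding) with $g\circ f$ and $f\circ g$ at bounded distance from $\id_X$ and $\id_Y$; bounded-distance maps induce identical boundary maps because bounded-distance sequences lie in the same equivalence class in $\partial_s$. By functoriality (Lemma \ref{bdry basic prop}(3)), $\partial g \circ \partial f = \partial(g\circ f) = \id_{\partial_s X}$ and $\partial f \circ \partial g = \id_{\partial_s Y}$; both maps are continuous by the first paragraph, so $\partial f$ is a homeomorphism. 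The main technical obstacle is in the continuity argument, namely making precise that the fellow-traveling of quasigeodesic rays in $X$ transfers under $f$ to fellow-traveling of their images in $Y$ on initial segments of growing length; everything else is a consequence of compactness or functoriality.
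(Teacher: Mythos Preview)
The paper does not give its own proof of this proposition; it is stated with citations to \cite{vaisala}, \cite{gromov-hypgps}, and \cite{bridson-haefliger} and then used as background. So there is no in-paper argument to compare your proposal against.

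Your argument is essentially correct and follows the standard route. The continuity step via Lemma~\ref{convg criteria 2} is sound: a qi embedding sends uniform quasigeodesic rays to uniform quasigeodesic rays, and the fellow-traveling of initial segments transfers with controlled distortion of the radius and the Hausdorff bound. The closed-embedding step is exactly the compact-to-Hausdorff argument (noting $\partial X$ is compact metrizable when $X$ is proper), and the homeomorphism step via a quasi-inverse and functoriality (Lemma~\ref{bdry basic prop}(3)) is the expected one. One small remark: the invocation of Lemma~\ref{subseq} at the end of your continuity paragraph is superfluous, since your argument already establishes $\partial f(\xi_n)\to \partial f(\xi)$ for the full sequence directly.
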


Consequently it is natural to ask if non-qi embeddings could also induce continuous maps
between the Gromov boundaries of hyperbolic spaces. This provides a motivation
to the following definition. 
\begin{defn}[{\bf Cannon-Thurston map}] \label{defn: CT} 
Let $f:X\rightarrow Y$ be a map between two hyperbolic metric spaces. We say that {\em the
Cannon-Thurston (CT) map} exists for $f$ or $f$ admits {\em the CT map}
if the following hold:
	
(1) Given any $\xi\in \partial X$ and any sequence $\{x_n\}$ in $X$ with $\xi=[\{x_n\}]$, 
the sequence $\{f(x_n)\}\in S_{\infty}(Y)$ and $[\{f(x_n)\}]$ depends only $\xi$ and not on
on the sequence $\{x_n\}$. Thus we have a map $\partial f:\partial X\map \partial Y$.

(2) The map $\partial f$ is continuous.
\end{defn}
As mentioned in the introduction, the notion of CT maps between hyperbolic metric spaces were introduced 
by Mahan Mitra (\cite{mitra-trees}). Our definition is an adaptation from his definition; however,
it is easy to check that these definitions are equivalent. One also notes that the CT map is unique if it exists.
However, the Lemma below shows that (2) in the definition of CT maps follows from (1).

\begin{lemma}\label{new CT lemma 1}
Suppose $f:X\map Y$ is any map between hyperbolic metric spaces which satisfies the condition (1) of Definition \ref{defn: CT}.
Then $\partial f:\partial X\map \partial Y$ is continuous too, i.e. $\partial f$ is the CT map.
\end{lemma}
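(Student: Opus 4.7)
The plan is to verify the Gromov-product characterization of sequential convergence in $\partial_s Y$ (property (2) of the topology on $X\cup\partial_s X$ listed just before Lemma \ref{convg criteria}): given $\xi_k\to\xi$ in $\partial X$, the goal is to show $\lim_{k\to\infty}\liminf_{m,n\to\infty}(f(a^k_m).f(b_n))_{y_0}=\infty$ for some (hence any) $y_0\in Y$, where $\{a^k_n\}_n$ and $\{b_n\}$ are sequences representing $\partial f(\xi_k)$ and $\partial f(\xi)$. Fix $x_0\in X$ and $y_0\in Y$. For each $k$ pick a sequence $\{a^k_n\}_n$ in $X$ with $a^k_n\to\xi_k$ and a sequence $\{b_n\}$ with $b_n\to\xi$; hypothesis (1) of Definition \ref{defn: CT} ensures that $\{f(a^k_n)\}_n$ represents $\partial f(\xi_k)$ and $\{f(b_n)\}$ represents $\partial f(\xi)$.

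Set $\mu_k:=\liminf_{m,n\to\infty}(a^k_m.b_n)_{x_0}$; the hypothesis $\xi_k\to\xi$ forces $\mu_k\to\infty$. For each $k$ pick $n_k$ and a threshold $N_k$ with $(a^k_{n_k}.b_n)_{x_0}\geq \mu_k-1$ for all $n\geq N_k$, and arrange $n_k$ even larger so that $(f(a^k_m).f(a^k_{n_k}))_{y_0}$ grows arbitrarily with $m$ for that fixed $k$ (possible since $\{f(a^k_n)\}_n$ converges to infinity in $Y$). A slim-triangle computation in $X$ with auxiliary vertex $b_n$ shows that $\{a^k_{n_k}\}_k$ converges to infinity in $X$, and passing to a subsequence of $\{b_n\}$ with indices $\geq N_k$ together with Lemma \ref{lemma: seq bdry 1}(2) shows $\{a^k_{n_k}\}_k$ lies in the equivalence class of $\xi$; hence (possibly after a further harmless subsequence in $k$) $a^k_{n_k}\to\xi$ in $X\cup\partial X$. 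Applying hypothesis (1) once more yields $f(a^k_{n_k})\to\partial f(\xi)$ in $Y$, so by another slim-triangle estimate (or equivalently Lemma \ref{lemma: seq bdry 1}(2) combined with convergence to infinity) the mixed Gromov products satisfy $(f(a^k_{n_k}).f(b_n))_{y_0}\to\infty$ as $\min(k,n)\to\infty$.

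The conclusion now follows from the slim-triangle inequality in $Y$:
\[
(f(a^k_m).f(b_n))_{y_0}\ \geq\ \min\bigl\{(f(a^k_m).f(a^k_{n_k}))_{y_0},\ (f(a^k_{n_k}).f(b_n))_{y_0}\bigr\}-\delta_Y.
\]
Given any $M\geq 0$, the first right-hand term exceeds $M+\delta_Y$ for $m$ sufficiently large (by the choice of $n_k$), and the second exceeds $M+\delta_Y$ for $k$ and $n$ sufficiently large. Taking $\liminf_{m,n\to\infty}$ and then $\lim_{k\to\infty}$ yields $\lim_{k}\liminf_{m,n}(f(a^k_m).f(b_n))_{y_0}=\infty$, which is precisely the Gromov-product criterion for $\partial f(\xi_k)\to\partial f(\xi)$ in $\partial Y$.

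The main obstacle is the diagonal bookkeeping in the second paragraph: the choice of $n_k$ must simultaneously make the $X$-side Gromov products with $\{b_n\}$ large (so that the diagonal converges to $\xi$) and the $Y$-side products along $\{f(a^k_n)\}_n$ large (so that the first term in the slim-triangle estimate is controlled), while the rate at which the $Y$-side products grow may depend on $k$. The fix is to allow $n_k$ to depend on the prescribed threshold $M$ and to verify the outer $\lim_k\liminf_{m,n}$ criterion one $M$ at a time. Non-properness of $X$ and $Y$ poses no difficulty because the entire argument is phrased in terms of sequences and Gromov products rather than compactness of the boundary.
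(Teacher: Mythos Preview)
Your overall strategy---pick a diagonal $a^k_{n_k}\to\xi$, apply hypothesis~(1) to it, and then bound $(f(a^k_m).f(b_n))_{y_0}$ via the Gromov four-point inequality with pivot $f(a^k_{n_k})$---is sound, and with the fix you yourself flag in the last paragraph it does go through.  One genuine misstatement to correct: the claim that for fixed $k$ and fixed $n_k$ the product $(f(a^k_m).f(a^k_{n_k}))_{y_0}$ ``grows arbitrarily with $m$'' is simply false, since that product is bounded above by $d_Y(y_0,f(a^k_{n_k}))$.  What you can (and must) do is choose $n_k$ large enough, \emph{depending on the target $M$}, so that this product exceeds $M+\delta_Y$ for all sufficiently large $m$; this is exactly your fix, and once you allow it the rest of the bookkeeping (the diagonal converging to $\xi$, hypothesis~(1), the four-point estimate) is routine.

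The paper's argument is the same diagonal idea packaged as a proof by contradiction, and this packaging removes precisely the difficulty you identify.  Assuming $\partial f(\xi_k)\not\to\partial f(\xi)$, one passes to a subsequence along which $\liminf_{m,n}(f(x^k_m).f(x_n))_{y_0}\leq R$ for a \emph{fixed} $R$, while $\liminf_{m,n}(x^k_m.x_n)_{x_0}\geq k$.  Then for each $k$ one picks a single pair of indices witnessing both inequalities; the resulting diagonal converges to $\xi$ in $X$, so by hypothesis~(1) its $f$-image converges to $\partial f(\xi)$, contradicting the uniform bound $\leq R$ on the $Y$-side products.  No four-point inequality in $Y$ is needed, and no $M$-dependent choice of $n_k$; the contradiction hypothesis hands you the single threshold $R$ up front.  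Your direct route works but is noticeably more laborious for exactly the reason you diagnose.
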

\proof Fix $x_0\in X, y_0\in Y$.  Suppose $\{\xi_n\}$ is a sequence in $\partial_s X$ and $\xi_n\map \xi\in \partial_s X$.
We want to show that $\partial f(\xi_n)\map \partial f (\xi)$. Suppose this is not the case.
Let $\xi_k=[\{x^k_n\}]$ and $\xi=[\{x_n\}]$. Then there is $R\geq 0$ such that upto passing to a subsequence
of $\{\xi_n\}$ we may assume that $\liminf_{m,n\map \infty}(f(x^k_m).f(x_n))_{y_0}\leq R$ and
$\liminf_{m,n\map \infty}(x^k_m.x_n)_{x_0}\geq k$ for all $k$. This implies that for all $k\in \NN$ there is $m_k\in \NN$
such that $(x^k_{m_k}.x_{m_k})_{x_0}\geq k$ but $(f(x^k_{m_k}).f(x_{m_k}))_{y_0}\leq R$.
Consequently, by Lemma \ref{lemma: seq bdry 1} $x^k_{m_k}\map \xi$ but $f(x^k_{m_k})\not \map \partial f(\xi)$- a contradiction. \qed

\begin{cor}\label{new CT lemma}
Suppose $X, Y$ are hyperbolic metric space, and  $f:X\map Y$ and $g:\partial_s X\map \partial_s Y$
are any maps which satisfy the following property:\\
For any $\xi\in \partial_s X$ and any sequence $x_n\map \xi$ where $x_n\in X$ for all $n\in \NN$ there is a
subsequence $\{x_{n_k}\}$ of $\{x_n\}$ such that $f(x_{n_k})\map g(\xi)$ as $k\map \infty$. Then $g$ is the
CT map induced by $f$.
\end{cor}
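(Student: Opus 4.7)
The plan is to verify that $f$ satisfies condition (1) of Definition \ref{defn: CT} and that $\partial f = g$; then the continuity of $g$ (i.e.\ condition (2)) will come for free from Lemma \ref{new CT lemma 1}. Thus I only need to show that for every $\xi\in\partial_s X$ and every sequence $\{x_n\}$ in $X$ with $x_n\to\xi$, the image sequence $\{f(x_n)\}$ lies in $S_{\infty}(Y)$ and represents the class $g(\xi)$.

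Fix such $\xi$ and $\{x_n\}$. I would establish convergence $f(x_n)\to g(\xi)$ in the Hausdorff space $Y\cup\partial_s Y$ by invoking the subsequence principle of Lemma \ref{subseq}. So let $\{f(x_{n_k})\}_k$ be an arbitrary subsequence; by Lemma \ref{lemma: seq bdry 1}(1) the corresponding subsequence $\{x_{n_k}\}_k$ still satisfies $x_{n_k}\to\xi$, and then the standing hypothesis supplies a further subsequence $\{x_{n_{k_l}}\}_l$ with $f(x_{n_{k_l}})\to g(\xi)$. Lemma \ref{subseq} now yields $f(x_n)\to g(\xi)$ in $Y\cup\partial_s Y$.

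Since $g(\xi)\in\partial_s Y$, the bullet point (1) recorded after Lemma \ref{bdry basic prop} translates this convergence into the two statements $\{f(x_n)\}\in S_{\infty}(Y)$ and $[\{f(x_n)\}]=g(\xi)$. This is precisely condition (1) of Definition \ref{defn: CT}, and it identifies the induced boundary map $\partial f$ with $g$. Finally, Lemma \ref{new CT lemma 1} promotes this set-theoretic map to a continuous one, completing the proof that $g=\partial f$ is the Cannon--Thurston map for $f$.

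There is no real obstacle here: the argument is essentially a single application of Lemma \ref{subseq}, combined with the observation that all subsequences of a sequence converging to $\xi\in\partial_s X$ still converge to $\xi$. The only point one has to watch is to phrase convergence inside the ambient Hausdorff space $Y\cup\partial_s Y$ rather than inside $\partial_s Y$ alone, so that Lemma \ref{subseq} applies without any additional hypothesis.
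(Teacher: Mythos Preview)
Your proof is correct and follows essentially the same approach as the paper: both arguments use Lemma \ref{subseq} applied in the Hausdorff space $\bar{Y}=Y\cup\partial_s Y$ to conclude $f(x_n)\to g(\xi)$, and then invoke Lemma \ref{new CT lemma 1} to obtain continuity. You have simply been more explicit about the intermediate steps (citing Lemma \ref{lemma: seq bdry 1}(1) for subsequences and the bullet point after Lemma \ref{bdry basic prop} for translating convergence into membership in $S_\infty(Y)$), but the structure is identical.
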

\proof Suppose $\xi\in \partial_s X$ and $\{x_n\}$ is a sequence in $X$ converging to $\xi$.
Then any subsequence of $\{f(x_n)\}$ has a subsequence which converges to $g(\xi)$. Since $\bar{Y}$
is a Hausdorff space it follows by Lemma \ref{subseq} that $f(x_n)\map g(\xi)$. Then we are done by Lemma 
\ref{new CT lemma 1}. \qed

\begin{rem}{\bf When CT map does not exist:}
Suppose $X, Y$ are hyperbolic metric spaces and $f:X\map Y$ is any map. We note that the CT map does not exist for $f$ means
that there is a point $\xi\in \partial_s X$ and a sequence $x_n\map \xi$ in $X$ such that $\{f(x_n)\}$  does not converge
to any point of $\partial_s Y$. This in turn implies that either (1) there are two subsequences of $\{f(x_n)\}$ converging
to two distinct points of $\partial_s Y$ or (2) there is subsequence of $\{f(x_n)\}$ which has no subsequence
converging to a point of $\partial_s Y$. For instance (2) holds if $\{f(x_n)\}$ is bounded, which,
of course, is impossible if $f$ is a proper embedding.
\end{rem}

The following lemma gives a sufficient condition for the existence of Cannon-Thurston maps.
Although it was proved by Mitra for proper hyperbolic metric spaces, it is true
for any hyperbolic metric space. In fact, it is immediate from Lemma \ref{convg criteria}(0)
and Lemma \ref{new CT lemma 1}.
\begin{lemma} {\em ({\bf Mitra's criterion},\textup{\cite[Lemma 2.1]{mitra-trees})}} \label{mitra's criterion}
	Suppose $X,Y$ are hyperbolic geodesic metric spaces and $f:Y\rightarrow X$ is a proper embedding. 
	Then $f$ admits the Cannon-Thurston map if the following holds:
	
	Given $y_0\in Y$ there exists a non-negative function $M(N)$, such that $M(N)\rightarrow \infty $ as $N\rightarrow \infty$ and 
	such that for all geodesic segments $\lambda$ lying outside $B(y_0,N)$ in $Y$, any geodesic segment in $X$ joining the end points of 
	$f(\lambda)$ lies outside $B(f(y_0),M(N))$ in $X$. 
\end{lemma}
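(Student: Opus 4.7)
The plan is to verify condition (1) of Definition \ref{defn: CT}, after which Lemma \ref{new CT lemma 1} automatically upgrades this to the continuity of the induced boundary map $\partial f$. So fix $\xi \in \partial_s Y$. I must show (a) for every sequence $\{y_n\}$ in $Y$ with $[\{y_n\}] = \xi$, the image $\{f(y_n)\}$ lies in $S_\infty(X)$, and (b) the equivalence class $[\{f(y_n)\}]$ is independent of the chosen representative.

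For (a), I would invoke Lemma \ref{convg criteria}(0) in $Y$ (with $k=1$): because $\{y_n\}$ converges to infinity with basepoint $y_0$, for every $N \geq 0$ there is an index $n_0$ such that $d(y_0, [y_m, y_n]) > N$ for all $m, n \geq n_0$, i.e.\ each such geodesic segment $[y_m, y_n]$ lies outside $B(y_0, N)$. The hypothesis of the lemma then yields $M(N)$ so that any geodesic in $X$ joining $f(y_m)$ to $f(y_n)$ lies outside $B(f(y_0), M(N))$, which is to say $d(f(y_0), [f(y_m), f(y_n)]) > M(N)$. Since $M(N) \to \infty$ with $N$, applying the converse direction of Lemma \ref{convg criteria}(0) in $X$ with basepoint $f(y_0)$ delivers $\{f(y_n)\} \in S_\infty(X)$. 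Part (b) is proved identically, with Lemma \ref{convg criteria}(1) in place of (0): from $\{y_n\} \sim \{y'_n\}$ one obtains $d(y_0, [y_m, y'_n]) \to \infty$ as $m, n \to \infty$, the hypothesis transports this to $d(f(y_0), [f(y_m), f(y'_n)]) \to \infty$, and Lemma \ref{convg criteria}(1) in $X$ then gives $\{f(y_n)\} \sim \{f(y'_n)\}$.

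There is essentially no obstacle once the machinery of Lemma \ref{convg criteria} and Lemma \ref{new CT lemma 1} is in hand: the entire argument is a transport, mediated by the function $M$, of the ``geodesics between the sequence points escape every ball around the basepoint'' characterization of convergence at infinity from $Y$ to $X$. The hypothesis is stated in terms of geodesic segments rather than quasigeodesics, which matches perfectly with the $k=1$ case of Lemma \ref{convg criteria}. Properness of $f$ is what makes the criterion non-vacuous (for instance, a constant map would trivially ``satisfy'' a version of the inequality with the wrong direction), but it is not invoked as a separate ingredient in the two-step reduction above.
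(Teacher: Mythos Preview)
Your argument is correct and matches the paper's approach exactly: the paper simply states that the lemma ``is immediate from Lemma \ref{convg criteria}(0) and Lemma \ref{new CT lemma 1},'' and you have spelled out precisely that deduction (adding the use of Lemma \ref{convg criteria}(1) for the well-definedness of the boundary map, which is implicit in the paper's remark).
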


\begin{rem}\label{CT iff Mitra}
\begin{enumerate}
\item In Mitra's criterion properness of $f$ is redundant since it follows from the rest.
\item It is clear from the proof of \cite[Lemma 2.1]{mitra-trees} that if $Y$ is a proper metric space then
the CT map exists for a proper embedding $f:Y\map X$ if and only if $f$ satisfies Mitra's criterion.
\end{enumerate}

\end{rem}

To end this section we include a few things about Gromov hyperbolic groups.
\begin{defn}
If $G$ is a hyperbolic group then the {\em Gromov boundary} $\partial G$ of $G$ is defined 
to be the boundary of any of its Cayley graphs with respect to a finite generating set.
\end{defn} 
Since for different finite generating sets the corresponding Cayley graphs of $G$ are quasiisometric, 
$\partial G$ is well defined by Proposition \ref{ct example1}.

The existence of CT maps gives the following useful criteria for quasiconvexity. 
\begin{lemma}\label{inj of ct}(\cite[Lemma 2.1]{mj-scott-thm})
If $G$ is a hyperbolic group and if $H$ is a hyperbolic subgroup of $G$
such that the inclusion $H\map G$ admits the CT map $\partial H\rightarrow \partial G$
then $H$ is quasiconvex in $G$ if and only if the CT map is injective.
\end{lemma}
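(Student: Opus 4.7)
For the forward direction ($H$ quasiconvex implies the CT map is injective), I would use the standard fact that quasiconvexity of a finitely generated subgroup of a hyperbolic group is equivalent to the inclusion being a quasi-isometric embedding. Granted this, Proposition \ref{ct example1} applies to give that the induced boundary map $\partial \iota: \partial H \map \partial G$ is continuous and injective (indeed a closed embedding in the proper setting). By uniqueness of the CT map, this map coincides with the hypothesized one, which is therefore injective.

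For the reverse direction I argue the contrapositive. Assume $H$ is not quasiconvex in $G$; the goal is to exhibit distinct $\xi_H, \xi_H' \in \partial H$ with $\partial \iota(\xi_H) = \partial \iota(\xi_H')$. Non-quasiconvexity of $H$ produces sequences $a_n, b_n \in H$ and $z_n \in [a_n, b_n]_G$ with $d_G(z_n, H) \map \infty$. Translating each configuration by $a_n^{-1}$ (an element of $H$, which preserves $H$ as a set), I may assume $a_n = 1$. Then $d_G(1, z_n), d_G(z_n, b_n) \map \infty$, hence $d_G(1, b_n), d_H(1, b_n) \map \infty$. Using properness of both Cayley graphs, pass to subsequences so that $b_n \map \xi_H \in \partial H$ and $b_n \map \xi \in \partial G$; the Cannon-Thurston property gives $\partial \iota(\xi_H) = \xi$, and the Gromov product computation $(z_n \cdot b_n)_1^G = d_G(1, z_n) \map \infty$ forces $z_n \map \xi$ in $\partial G$ as well.

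The heart of the proof is constructing a second sequence $c_n \in H$ with $c_n \map \xi$ in $\partial G$ but $c_n \map \xi_H' \neq \xi_H$ in $\partial H$. Natural candidates are nearest $H$-vertices to $z_n$, or vertices of the $H$-geodesic from $1$ to $b_n$ near the ``peak'' of its $G$-excursion away from $[1, b_n]_G$. The content lies in verifying simultaneously that the $G$-Gromov product $(c_n \cdot b_n)_1^G$ tends to infinity (forcing the same $\partial G$-limit) while the $H$-Gromov product $(c_n \cdot b_n)_1^H$ stays bounded (forcing distinct $\partial H$-limits). The main obstacle is precisely this verification: it combines hyperbolicity of $G$ via fellow-traveling of quasigeodesics, the failure of quasiconvexity (producing arbitrarily large excursions of $H$-geodesics away from their $G$-geodesic counterparts between $H$-endpoints), and Mitra's criterion (guaranteed by existence of the CT map) to control the asymptotic geometry of $c_n$. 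Once such $c_n$ is produced, the CT map sends both $\xi_H$ and $\xi_H'$ to $\xi$, contradicting injectivity and completing the proof of the contrapositive.
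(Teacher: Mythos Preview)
The paper does not give its own proof of this lemma; it is simply cited from \cite{mj-scott-thm}. So there is no in-paper argument to compare against, and I evaluate your proposal on its own.

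Your forward direction is fine: quasiconvexity of $H$ in $G$ makes the inclusion a qi embedding, and Proposition~\ref{ct example1} (together with uniqueness of the CT map) gives injectivity.

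Your reverse direction, however, is not a proof but an outline with the decisive step missing. You explicitly write that ``the main obstacle is precisely this verification'' and then do not carry it out. Concretely, for your candidates $c_n$ (nearest $H$-point to $z_n$, or a peak of the $H$-geodesic) you have not shown either that $(c_n\cdot b_n)^G_1\to\infty$ or that $(c_n\cdot b_n)^H_1$ stays bounded; neither is automatic. For instance, if $c_n$ is the nearest $H$-point to $z_n$, then $d_G(c_n,z_n)=d_G(z_n,H)\to\infty$, and there is no evident lower bound on $(c_n\cdot z_n)^G_1$. There is also a further gap you do not flag: bounded $(c_n\cdot b_n)^H_1$ only prevents $c_n\to\xi_H$; it does not by itself produce a limit $\xi'_H\in\partial H$. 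You need in addition that $d_H(1,c_n)\to\infty$ (so that, after passing to a subsequence, $c_n$ converges to some point of $\partial H$), and this too requires justification for your candidates.

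A cleaner route, and the one typically used in the literature, avoids constructing $c_n$ altogether: since $\partial H$ is compact and $\partial G$ is Hausdorff, an injective CT map is a homeomorphism onto $\Lambda_G(H)$; it is $H$-equivariant, so it conjugates the $H$-action on $\partial H$ (a uniform convergence action) to the $H$-action on $\Lambda_G(H)$; hence every point of $\Lambda_G(H)$ is a conical limit point for $H$, which is a known characterization of quasiconvexity. (The paper itself uses exactly this style of argument in the relatively hyperbolic setting; see the proof of Theorem~\ref{application 2}.) If you want to persist with the direct geometric approach, you must actually produce $c_n$ and verify the two Gromov-product conditions and the unboundedness of $c_n$ in $H$; as written, this is a genuine gap.
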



\smallskip



\section{Electric geometry and CT maps}\label{3}

\subsection{Farb's electrified spaces}\label{section electric 1}

Given a geodesic metric space $X$ and a collection of its subsets $\{A_i\}_{i\in I}$, Farb (see \cite{farb-relhyp})
defined the electrified or coned-off
space $\HAT{X}$ of $X$ with respect to $\{A_i\}$ to be a new metric space as follows. For 
each $A_i$ one introduces an extra point $c_i$ called the {\em cone point} for $A_i$. Then 
each point of $A_i$ is joined to $c_i$ by an edge of length $1$. In other words, as a set one has 
$\HAT{X}=X\sqcup (\cup_{i\in I}  A_i\times [0,1])\sqcup \{c_i\}_{i\in I}/\sim$ 
where $(x,1)\sim c_i$ for all $x\in A_i$ and $i\in I$
and $(x,0)\sim x$ for all $x\in A_i$ and $i\in I$. 
For all $A_i$ and all $x,x'\in A_i$, concatenation of the edge joining $x$ to $c_i$ and the edge
joining $c_i$ to $x'$ is a path of length $2$ from $x$ to $x'$ in $\HAT{X}$. 
We shall call this path the {\bf electric path} joining $x,x'$ and denote it by $e_i(x,x')$.
On the coned-off space $\HAT{X}$ one may then put the natural length metric using these paths. 
We shall assume that these are geodesic metric spaces. This is true in all the examples involving groups. 
We record the following basic lemma that we shall need later. See e.g. \cite[Proposition 3.1]{farb-relhyp}
for an idea of the proof.

\begin{comment}

\begin{lemma}
Suppose $f:X\map Z$ is a coarsely Lipschitz, coarsely surjective map. For all $z\in f(X)$, let $A_z=f^{-1}(z)$.
Let $\HAT{X}$ be the coned-off space obtained from $X$ by coning the $A_z$'s. Suppose there are constants
$D_0>0, D_1>0$ such that for all $z_1,z_2\in f(X)$ with $d_Z(z_1,z_2)\leq D_0$ there are $x_i\in A_{z_i}$,
$i=1,2$ with $d_X(x_1,x_2)\leq D_1$. Suppose $\HAT{f}:\HAT{X}\map Z$ be extension of $f$ obtained by mapping
line segments joining the cone point for $A_z$ and each point of $A_z$ to $z$ for all $z\in f(X)$.
Then $\HAT{f}$ is a uniform quasiisometry.
\end{lemma}

\begin{lemma}
Given $D\geq 0$ there is $K\geq 1$ such that the follwing holds:\\
Suppose $X$ is a geodesic metric space and $\{A_i\}$ is a collection of subsets of $X$ each of which has diameter
at most $D$. Then the inclusion $X\map \HAT{X}$ is a $K$-quasiisometry.
\end{lemma}
\end{comment}

\begin{lemma}\label{basic cone-off 3}
Given $D\geq 0$ there is $K_{\ref{basic cone-off 3}}=K_{\ref{basic cone-off 3}}(D)$ such that the following holds:

Suppose $X$ is a geodesic metric space and $\{A_i\}_{i\in I}$ and $\{B_i\}_{i\in I}$ are two sets of subsets
of $X$ indexed by the same set $I$. Suppose $\HAT{X}_A, \HAT{X}_B$ are the coned-off spaces obtained from coning the
$A_i$'s and $B_i$'s respectively. 
Let $\phi: \HAT{X}_A \map \HAT{X}_B$ be the extension of the identity map $X\map X$
obtained by mapping the open ball of radius $1$ about the cone point for $A_i$ to the cone point for $B_i$.

If there is $D\geq 0$ such that  $Hd(A_i, B_i)\leq D$  for all $i\in I$ then $\phi$ is a $K_{\ref{basic cone-off 3}}$-quasiisometry.
\end{lemma}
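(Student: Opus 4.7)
The plan is to exploit the symmetry between the roles of $\{A_i\}$ and $\{B_i\}$: build a natural candidate quasi-inverse $\psi\colon \HAT{X}_B\map \HAT{X}_A$ by the symmetric rule (identity on $X$, collapsing the open unit ball around the cone point of $B_i$ to the cone point of $A_i$), show both $\phi$ and $\psi$ are coarsely Lipschitz with a constant depending only on $D$, and then note that $\psi\circ\phi$ is the identity on $X$ and sends $c^A_i\mapsto c^A_i$, so the composition is within bounded distance of $\id$. Coarse surjectivity of $\phi$ is immediate since $\phi$ restricted to $X$ is the identity and $\phi(c^A_i)=c^B_i$, so every cone point of $\HAT{X}_B$ is hit.

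The main step is the coarse Lipschitz bound on $\phi$. Since $\HAT{X}_A$ is a length space, it suffices to bound $d_{\HAT{X}_B}(\phi(x),\phi(y))$ when $x,y$ lie at unit distance along an edge of $\HAT{X}_A$. The only non-trivial case is an edge joining some $a\in A_i$ to the cone point $c^A_i$. By the Hausdorff distance hypothesis there exists $b\in B_i$ with $d_X(a,b)\leq D$, so
\[
d_{\HAT{X}_B}(a,c^B_i)\;\leq\; d_X(a,b)+d_{\HAT{X}_B}(b,c^B_i)\;\leq\; D+1.
\]
Consequently each unit cone-edge in $\HAT{X}_A$ maps into $\HAT{X}_B$ with image of diameter $\leq D+1$, while every $X$-edge is preserved isometrically. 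Given any $\HAT{X}_A$-geodesic from $x$ to $y$, concatenating these local bounds along its edges yields $d_{\HAT{X}_B}(\phi(x),\phi(y))\leq (D+1)\,d_{\HAT{X}_A}(x,y)$ (together with a small additive error to handle partial edges near the cone points on the open unit balls that get collapsed). The identical argument, with the roles swapped, shows $\psi$ is $(D+1)$-coarsely Lipschitz.

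Finally, to upgrade the coarse Lipschitz property of $\phi$ to a quasi-isometric embedding, observe that $\psi\circ\phi$ fixes every point of $X$ and sends $c^A_i$ to $c^A_i$, so it is at bounded $\HAT{X}_A$-distance (at most $D+1$) from the identity. Thus for any $x,y\in \HAT{X}_A$,
\[
d_{\HAT{X}_A}(x,y)\;\leq\; d_{\HAT{X}_A}(\psi\phi(x),\psi\phi(y))+2(D+1)
\;\leq\; (D+1)\,d_{\HAT{X}_B}(\phi(x),\phi(y))+2(D+1),
\]
giving the lower bound. Combined with coarse surjectivity, $\phi$ is a quasi-isometry with constants depending only on $D$, so some $K_{\ref{basic cone-off 3}}(D)$ suffices. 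The expected obstacle is purely bookkeeping around the half-edges inside the open unit ball around each cone point where $\phi$ fails to be single-valued in a naive sense; this is handled by the standard convention that such collapsed regions contribute an additive error bounded in terms of $D$.
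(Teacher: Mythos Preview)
Your proof is correct. The paper itself does not supply a proof of this lemma; it merely records the statement and points to \cite[Proposition 3.1]{farb-relhyp} for the idea. Your approach---building the symmetric quasi-inverse $\psi$, establishing the coarse Lipschitz bound by decomposing paths into $X$-segments (on which $\phi$ is the identity) and cone-edge segments (whose images have diameter at most $D+1$), and then using $d_{\HAT{X}_A}(\psi\phi(\cdot),\cdot)\leq 1$ to recover the lower quasi-isometry inequality---is precisely the standard argument and is in the same spirit as Farb's proof. One minor correction: $\psi\circ\phi$ is in fact within distance $1$ (not $D+1$) of the identity, since on $X$ it is the identity, on cone points it is the identity, and on a point $p$ in the open unit ball about $c^A_i$ one has $\psi\phi(p)=c^A_i$ with $d_{\HAT{X}_A}(p,c^A_i)<1$; this only improves your constants.
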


\begin{conv}
We fix the following notation and convention for the subsections \ref{section electric 1}, \ref{section 3.2} and 
\ref{section main thm}. We shall assume that the metric space $X$  
is $\delta_0$-hyperbolic and that we are coning off $k_0$-quasiconvex sets $\{A_i\}$ in $X$. 
We note that by Lemma \ref{dotted quasi-geodesic in quasi-convex set}
for all $A_i$ and all $x,x'\in A_i$ there is a uniform
quasigeodesic of $X$ joining $x,x'$ whose image is contained in $A_i$.
We shall assume that these are all $\lambda_0$-quasigeodesics. In these subsections we shall suppress the 
explicit dependence of the functions or constants appearing in various propositions, lemmas,
and corollaries on $\delta_0, k_0$ and $\lambda_0$.
\end{conv}

\begin{defn}{\em ({\bf Concatenation of paths})}
(1) If $\alpha_1:I_1\map X$ and $\alpha_2:I_2\map X$ are any two maps where $I_1, I_2$
are intervals in $\RR$ such that $\max I_1, \min I_2$ exists and $\alpha_1(\max I_1)=\alpha_2(\min I_2)$
then we define the {\em concatenation} of $\alpha_1, \alpha_2$ to be the map $\alpha: I\map X$ 
where $I=I_1\cup \{t+\max I_1-\min I_2: t\in I_2\}$ and $\alpha|_{I_1}=\alpha_1$
and $\alpha(t+\max I_1-\min I_2)=\alpha_2(t)$ for all $t\in I_2$.

We shall denote the concatenation of $\alpha_1$ and $\alpha_2$ by $\alpha_1*\alpha_2$.

(2) {\em ({\bf De-electrification})} Suppose $\gamma$ is a continuous path in $\HAT{X}$ with end points in $X$. 
A de-electrification $\gamma^{de}$ of $\gamma$ is a path in $X$ constructed 
from $\gamma$ as follows. If $\gamma$ is the concatenation 
$\gamma_1*e_{i_1}(x_1,x'_1)*\gamma_2*\cdots *e_{i_n}(x_n,x'_n)*\gamma_{n+1}$
where $\gamma_i$'s are paths in $X$ and $e_{i_j}(x_j,x'_j)$ are electric paths
joining $x_j, x'_j\in A_{i_j}$ and passing through the cone point $c_{i_j}$ then 
$\gamma^{de}$ is the concatenation 
$\gamma_1*c(x_1,x'_1)*\gamma_2*\cdots *c(x_n,x'_n)*\gamma_{n+1}$ where each 
$c(x_j,x'_j)$ is a $\lambda_0$-quasigeodesic segments of $X$ joining $x_j,x'_j$ 
which is contained in the quasiconvex set $A_{i_j}$. 
\end{defn}
We refer the reader to \cite[Section 2.5.2]{mj-dahmani} for a slightly different definition
of de-electrification of paths.
We note that one may define de-electrification of paths that pass through
infinitely may cone points also in an obvious manner. Of course we have not defined
electrification (see \cite[Section 3.3]{farb-relhyp}) of paths of $X$ since we do 
not need them. The following lemma explains 
the significance of de-electrification of paths in our context.

\begin{lemma}\label{electric qc} 
Given $l\in \mathbb Z_{\geq 0}$ there is a constant 
$K_{\ref{electric qc}}=K_{\ref{electric qc}}(l)$
such that the following holds:\\
	Let $\gamma$ be a geodesic of length at most $l$ in $\HAT{X}$. Then the de-electrification of $\gamma$ is a
$K_{\ref{electric qc}}$-quasiconvex path in $X$.
\end{lemma}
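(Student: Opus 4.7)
The plan is to bound the number of pieces appearing in the de-electrification and then invoke Lemma \ref{finite qc} to pull uniform quasiconvexity out of a finite union of uniformly quasiconvex pieces.

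First I would count how many cone points $\gamma$ can traverse. Since the electric path $e_i(x,x')$ through a cone point $c_i$ has length exactly $2$ in $\HAT{X}$, a geodesic $\gamma$ of length at most $l$ in $\HAT{X}$ can contain at most $\lfloor l/2 \rfloor$ such electric subsegments. Thus $\gamma$ decomposes as
\[
\gamma = \gamma_1 * e_{i_1}(x_1,x_1') * \gamma_2 * \cdots * e_{i_n}(x_n,x_n') * \gamma_{n+1}
\]
with $n \leq l/2$, where each $\gamma_j$ is a subsegment of $\gamma$ lying in $X$. Because $\gamma$ is a geodesic of $\HAT{X}$, each $\gamma_j$ is in particular a geodesic of $X$ (any shortcut in $X$ would shorten $\gamma$ in $\HAT{X}$ as well). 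So by Lemma \ref{qc lemma 1}(1) each $\gamma_j$ is $\delta_0$-quasiconvex in $X$.

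Next I would control the inserted pieces. By the convention fixed above, each $c(x_j,x_j')$ used in the de-electrification is a $\lambda_0$-quasigeodesic of $X$ contained in the $k_0$-quasiconvex set $A_{i_j}$. By Lemma \ref{qc lemma 1}(2) each such $c(x_j,x_j')$ is $D_{\ref{stability}}(\delta_0,\lambda_0,\lambda_0)$-quasiconvex in $X$. Setting
\[
k^* := \max\{\delta_0,\, D_{\ref{stability}}(\delta_0,\lambda_0,\lambda_0)\},
\]
every piece appearing in $\gamma^{de}$ is $k^*$-quasiconvex, and the total number of pieces is at most $2n+1 \leq l+1$.

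Finally I would concatenate. Consecutive pieces in $\gamma^{de}$ share their junction endpoints by construction, so the intersection condition of Lemma \ref{finite qc} is satisfied. Applying that lemma to the $\leq l+1$ uniformly $k^*$-quasiconvex subsets of $X$ yields that their union, namely the image of $\gamma^{de}$, is $D_{\ref{finite qc}}(\delta_0, k^*, l+1)$-quasiconvex in $X$. Hence one can take
\[
K_{\ref{electric qc}}(l) := D_{\ref{finite qc}}(\delta_0,\, k^*,\, l+1).
\]
The only non-routine point is the observation that each $\gamma_j$ is actually a geodesic of $X$, which is forced by the fact that $\gamma$ was already minimizing in the larger space $\HAT{X}$; everything else is bookkeeping combined with Lemmas \ref{qc lemma 1} and \ref{finite qc}.
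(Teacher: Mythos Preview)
Your proof is correct and follows essentially the same approach as the paper: count the pieces of $\gamma^{de}$, note that each is uniformly quasiconvex via Lemma \ref{qc lemma 1}, and apply Lemma \ref{finite qc} to their union. Your count of $\leq l/2$ electric paths is actually sharper than the paper's bound of $\leq l$, and your explicit justification that each $\gamma_j$ is a genuine $X$-geodesic (because a shortcut in $X$ would shorten $\gamma$ in $\HAT X$) fills in a detail the paper leaves implicit.
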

\begin{proof} We note that $\gamma$ is the concatenation of at most $l$ electric paths
and $l+1$ geodesic segments of $X$. Thus a de-electrification $\gamma^{de}$ of $\gamma$ is the 
concatenation of at most $l+1$ geodesic segments of $X$ and at most $l$ number of $\lambda_0$-quasigeodesic
segments in $X$. The lemma then follows from Lemma \ref{qc lemma 1} and Lemma \ref{finite qc}. In fact one 
may take $K_{\ref{electric qc}}=D_{\ref{finite qc}}(\delta_0,\lambda_0,l)$.
\end{proof}

The following result is motivated from Farb's (\cite{farb-relhyp}) weak relative hyperbolicity. The statement was known to be 
true to the specialists for a long time. However, the first rigorous proof of it appears in \cite{mj-dahmani}. 
See also \cite[Proposition 2.6]{kapovich-rafi}.

\begin{prop}\textup{(\cite[Proposition 2.10]{mj-dahmani})}\label{mj-dahmani}
Given $\delta\geq 0,C\geq 0$ there exists a constant 
$\delta_{\ref{mj-dahmani}}= \delta_{\ref{mj-dahmani}}(\delta, C)\geq 0$ such that 
if $X$ is a $\delta$-hyperbolic metric space and $\{A_i\}$ is a collection of $C$-quasiconvex 
subsets of $X$ then $\HAT{X}$ is $\delta_{\ref{mj-dahmani}}$-hyperbolic.
\end{prop}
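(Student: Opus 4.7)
The plan is to show that every geodesic triangle in $\HAT{X}$ is $\delta'$-slim for some $\delta'=\delta'(\delta_0,k_0)$, which is equivalent to hyperbolicity. The strategy is to compare $\HAT{X}$-geodesics with $X$-geodesics between the same endpoints and then use the known $\delta_0$-slimness of triangles in $X$.

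The heart of the argument is a uniform fellow-traveling lemma: there exists $D=D(\delta_0,k_0)$ such that for any $p,q\in X$, any $\HAT{X}$-geodesic $\sigma$ from $p$ to $q$ and any $X$-geodesic $\sigma'$ from $p$ to $q$ satisfy $Hd_{\HAT{X}}(\sigma,\sigma')\leq D$. To establish this, de-electrify $\sigma$ via Lemma \ref{electric qc} to obtain $\sigma^{de}$, a path in $X$ alternating between $X$-geodesic pieces and $\lambda_0$-quasigeodesic pieces inside the quasiconvex sets $A_{i_1},\dots,A_{i_n}$ through which $\sigma$ passes. Let $a_j,b_j\in A_{i_j}$ be the entry and exit points for the $j$-th cone excursion. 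Minimality of $\sigma$ in $\HAT{X}$ forces $d_X(a_j,b_j)\geq 3$, since otherwise the cost-$2$ cone shortcut would not improve on going through $X$. Consider the geodesic quadrilateral in $X$ with sides formed by $\sigma'$ and the three $X$-geodesic subsegments $[p,a_j]$, $[a_j,b_j]$, $[b_j,q]$. Using the slim polygon property (Lemma \ref{slim polygon}) in $X$, $a_j$ and $b_j$ lie within distance $O(\delta_0)$ of $\sigma'$; let $a_j',b_j'$ be their nearest-point projections to $\sigma'$. The subsegment of $\sigma'$ between $a_j'$ and $b_j'$ is a quasigeodesic connecting points near $a_j,b_j\in A_{i_j}$, hence by stability (Theorem \ref{stability}) and $k_0$-quasiconvexity of $A_{i_j}$, this subsegment lies in the $O(\delta_0+k_0)$-neighborhood of $A_{i_j}$ in $X$, which projects to the ball of radius $O(1)$ around the cone point $c_{i_j}$ in $\HAT{X}$. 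The corresponding portion of $\sigma$ near $c_{i_j}$ is at $\HAT{X}$-distance $\leq 1$ from $c_{i_j}$. Outside the cone excursions, $\sigma^{de}$ consists of $X$-geodesic segments which, by hyperbolicity, lie close to $\sigma'$ in $X$ (and thus in $\HAT{X}$). Ordering the excursions along $\sigma'$ and patching these estimates together gives the desired uniform fellow-traveling in $\HAT{X}$.

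Once the fellow-traveling is established, the conclusion follows quickly. Given a geodesic triangle $T=xyz$ in $\HAT{X}$, first replace any vertex that is a cone point by an adjacent point in $X$ (cost at most $1$ in slimness). Let $T'$ be the $X$-geodesic triangle on the same vertices. Since $d_{\HAT{X}}\leq d_X$, the triangle $T'$ is $\delta_0$-slim when viewed in $\HAT{X}$. The fellow-traveling gives $Hd_{\HAT{X}}$-distance at most $D$ between each side of $T$ and the corresponding side of $T'$. Thus for any point $p$ on a side of $T$, move to a $D$-close point on the corresponding side of $T'$, then by $\delta_0$-slimness to a point on another side of $T'$, then by fellow-traveling again to a $D$-close point on the corresponding side of $T$, yielding slimness constant $\delta'=2D+\delta_0+O(1)$.

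The main obstacle is the fellow-traveling lemma, particularly the matching of cone excursions of $\sigma$ with nearby subsegments of $\sigma'$. The difficulty is combinatorial: one must show the excursions can be indexed so that their projections $[a_j',b_j']$ appear in the correct order on $\sigma'$, without overlapping and without leaving gaps too large to control. This requires careful use of hyperbolicity of $X$ (to guarantee nearest-point projections behave well) together with the quantitative constraint $d_X(a_j,b_j)\geq 3$ forced by minimality in $\HAT{X}$, which separates consecutive cone excursions in $X$ and prevents pathological interleaving.
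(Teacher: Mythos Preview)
The paper does not give its own proof of this proposition; it is quoted from \cite{mj-dahmani} and used as a black box. So there is no in-paper argument to compare against. More importantly, your proposed argument has a genuine gap.

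Your key claim --- that the entry/exit points $a_j,b_j$ of each cone excursion lie within $O(\delta_0)$ of the $X$-geodesic $\sigma'$ --- is false in general, and the slim-polygon property does not yield it. In a geodesic quadrilateral $p\,a_j\,b_j\,q$, slimness controls each \emph{side} by the other three, not a \emph{vertex} by the opposite side. Here is a concrete counterexample. Let $X$ be a simplicial tree (so $\delta_0=0$): take $\sigma'=[p,q]$ of length $100$, let $m$ be its midpoint, and attach a branch at $m$ reaching a vertex $r$ with $d_X(r,m)=50$. Set $A_1=[p,r]$ and $A_2=[r,q]$; both are convex. Then $p\to c_1\to r\to c_2\to q$ is an $\HAT X$-geodesic of length $4$, with $b_1=a_2=r$ at $X$-distance $50$ from $\sigma'$. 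So the step ``$a_j,b_j$ lie within $O(\delta_0)$ of $\sigma'$'' fails, and with it the rest of your matching/ordering argument, which is built on projecting $a_j,b_j$ to nearby points $a_j',b_j'$ of $\sigma'$.

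What \emph{is} true in such examples is that $a_j,b_j$ are close to $\sigma'$ in the $\HAT X$-metric (via the cone points), but establishing that uniformly without already knowing $\HAT X$ is hyperbolic is precisely the nontrivial content. Note the logical order in the paper: the fellow-traveling statement you are aiming for is Lemma~\ref{kapovich-rafi main cor}, and its proof there \emph{uses} Proposition~\ref{mj-dahmani} (and Proposition~\ref{qc hat qc}) through stability of quasigeodesics in $\HAT X$. Reversing that order is a legitimate strategy, but it requires a different mechanism --- for instance a ``guessing geodesics'' criterion: declare the preferred paths in $\HAT X$ to be images of $X$-geodesics, check thin triangles for that family directly from $\delta_0$-hyperbolicity of $X$ and $k_0$-quasiconvexity of the $A_i$, and conclude hyperbolicity of $\HAT X$; fellow-traveling then drops out a posteriori. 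Your current write-up does not supply such an argument.
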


In addition to the above proposition the following nice result appears in \cite{mj-dahmani} that motivates 
the rest of the section.
\begin{prop}\textup{(\cite[Proposition 2.11]{mj-dahmani})}\label{qc hat qc}
Given $\delta\geq 0,C\geq 0, K\geq 0$ there exists a constant 
$K_{\ref{qc hat qc}}= K_{\ref{qc hat qc}}(\delta, C, K)$ such that the following holds:\\
Suppose we have the hypotheses of Proposition \ref{mj-dahmani}. Then any 
$K$-quasiconvex subset $Q$ of $X$ is $K_{\ref{qc hat qc}}$-quasiconvex in $\HAT{X}$.
\end{prop}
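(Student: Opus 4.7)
My plan is to build, for any geodesic $\HAT{\gamma}$ in $\HAT{X}$ with endpoints $x,y\in Q$, a uniform $\HAT{X}$-quasigeodesic from $x$ to $y$ that visibly stays in a bounded $\HAT{X}$-neighborhood of $Q$, and then apply stability of quasigeodesics in $\HAT{X}$ --- which is hyperbolic by Proposition \ref{mj-dahmani} --- to transfer the conclusion to $\HAT{\gamma}$ itself.

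First I would fix an $X$-geodesic $\gamma$ from $x$ to $y$. Since $Q$ is $K$-quasiconvex in $X$ we get $\gamma\subset N_K^X(Q)$, and because the inclusion $X\hookrightarrow \HAT{X}$ is $1$-Lipschitz this upgrades to $\gamma\subset N_K^{\HAT{X}}(Q)$. Next I would build an ``electrified companion'' $\gamma_e$ of $\gamma$ as follows: walking along $\gamma$ and using $\delta$-slim triangles in $X$ together with $C$-quasiconvexity of the $A_i$'s, I would identify the maximal sub-arcs of $\gamma$ that enter some universal $D$-neighborhood $N_D^X(A_i)$, $D=D(\delta,C)$, and replace each such sub-arc by a detour consisting of a short $X$-geodesic from the entry point to a nearest point $a_i\in A_i$, the length-one edge from $a_i$ to the cone point $c_i$, the symmetric edge from $c_i$ to some $a_i'\in A_i$ near the exit, and finally a short $X$-geodesic back to the exit point. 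This is the standard electrification construction of \cite{farb-relhyp,mj-dahmani}, and its core lemma states that $\gamma_e$ is a $\lambda$-quasigeodesic of $\HAT{X}$ for some $\lambda=\lambda(\delta,C)$. Moreover every point of $\gamma_e$ is within some $R_0=R_0(\delta,C,K)$ from $Q$ in the $\HAT{X}$-metric, because points on the undisturbed parts of $\gamma$ are already $K$-close to $Q$ in $X$, while each inserted detour has $\HAT{X}$-diameter at most $2D+2$ with endpoints sitting on $\gamma$.

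Given $\gamma_e$, the rest is one line: $\HAT{X}$ being $\delta_{\ref{mj-dahmani}}(\delta,C)$-hyperbolic, Theorem \ref{stability} yields
$$Hd_{\HAT{X}}(\HAT{\gamma},\gamma_e)\leq D_{\ref{stability}}(\delta_{\ref{mj-dahmani}}(\delta,C),\lambda,\lambda),$$
and hence $\HAT{\gamma}\subset N_{K'}^{\HAT{X}}(Q)$ with $K':=R_0+D_{\ref{stability}}(\delta_{\ref{mj-dahmani}}(\delta,C),\lambda,\lambda)$, supplying the constant $K_{\ref{qc hat qc}}$. The main obstacle is the construction of $\gamma_e$ and the verification that it is a uniform $\HAT{X}$-quasigeodesic: cone-point detours can shorten $\HAT{X}$-length dramatically, so one has to control where detours are inserted so that they align with genuine deep excursions of $\gamma$ into some $A_i$, handle overlapping contributions of different $A_i$'s consistently, and verify that the resulting path neither oscillates nor accumulates excess $\HAT{X}$-length. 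This is exactly what the electric-geometry analysis of Farb (\cite{farb-relhyp}) and Mj-Dahmani (\cite{mj-dahmani}) is designed to control, and for a clean writeup one would invoke their lemmas rather than redevelop them.
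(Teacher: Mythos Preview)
The paper does not supply its own proof of this proposition: it is quoted verbatim as \cite[Proposition 2.11]{mj-dahmani} and used as a black box (most notably in the proof of Lemma \ref{kapovich-rafi main cor}). So there is no in-paper argument to compare against.

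That said, your sketch is correct and is essentially the argument of the cited reference. The only point worth flagging is that the heart of the matter is exactly where you say it is: showing that the electrified companion $\gamma_e$ of an $X$-geodesic is a uniform $\HAT{X}$-quasigeodesic. You defer this to ``the electric-geometry analysis of Farb and Mj--Dahmani'', which is fine as a citation but means your writeup is, like the paper's, ultimately a pointer to \cite{mj-dahmani} rather than an independent proof. If you wanted a self-contained argument in the spirit of this paper, an alternative is to note that an $X$-geodesic $\gamma$ is $\delta$-quasiconvex in $X$ (Lemma \ref{qc lemma 1}), so once the proposition is known for \emph{some} quasiconvex set one can bootstrap: but of course that is circular here, and indeed the paper's later Lemma \ref{kapovich-rafi main cor} runs exactly this bootstrap \emph{using} Proposition \ref{qc hat qc}. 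So your route---build $\gamma_e$ directly and invoke stability in $\HAT{X}$---is the right one, and matches the source.
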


A form of a converse of this proposition also appears in \cite[Proposition 2.12]{mj-dahmani}. However, this leads us to
the following question. Are there analogues of the above proposition where quasiconvexity is replaced by 
weaker conditions? The main technical result of the paper and the main result
of this section will try to formalize these questions and will obtain some answers to them. We start with some basic lemmas
about geodesics in $\HAT{X}$. The following lemma is taken from \cite{kapovich-rafi}. However, we give
an alternative proof to it here.

\begin{lemma}\textup{(\cite[Corollary 2.4]{kapovich-rafi})}\label{kapovich-rafi main cor}
There is a constant 
$D_{\ref{kapovich-rafi main cor}}=D_{\ref{kapovich-rafi main cor}}(\delta_0, k_0,\lambda_0)$
such that the following holds:\\
Suppose $x,y\in X$. Suppose $\alpha$ is a geodesic in $X$ and  $\beta$ is a geodesic
in $\HAT{X}$ both joining $x,y$. Then $Hd_{\HAT{X}}(\alpha, \beta)\leq D_{\ref{kapovich-rafi main cor}}$.
\end{lemma}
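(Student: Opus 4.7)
The plan is to establish the two Hausdorff-inclusions $\beta \subset N^{\HAT{X}}_R(\alpha)$ and $\alpha \subset N^{\HAT{X}}_R(\beta)$ separately. The first is immediate: $\alpha$ is $\delta_0$-quasiconvex in $X$ by Lemma~\ref{qc lemma 1}, so by Proposition~\ref{qc hat qc} it is $K_1$-quasiconvex in $\HAT{X}$, where $K_1 := K_{\ref{qc hat qc}}(\delta_0, k_0, \delta_0)$ and $\HAT{X}$ is $\delta_1$-hyperbolic with $\delta_1 := \delta_{\ref{mj-dahmani}}(\delta_0, k_0)$ by Proposition~\ref{mj-dahmani}. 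Since $\beta$ is an $\HAT{X}$-geodesic between the two points $x,y \in \alpha$, we obtain $\beta \subset N^{\HAT{X}}_{K_1}(\alpha)$.

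For the reverse inclusion I would first establish a \emph{bounded case}: if $d_{\HAT{X}}(x,y) \leq C$ for a constant $C$, then $\alpha$ lies in a uniform $\HAT{X}$-neighborhood of $\beta$. Indeed, $|\beta|_{\HAT{X}} \leq C$, so by Lemma~\ref{electric qc} any de-electrification $\beta^{de}$ of $\beta$ (formed by replacing each electric sub-path $a \to c_i \to a'$ with a $\lambda_0$-quasigeodesic of $X$ inside $A_i$, supplied by Lemma~\ref{dotted quasi-geodesic in quasi-convex set}) is $K_{\ref{electric qc}}(C)$-quasiconvex in $X$. Since $x,y \in \beta^{de}$, the $X$-geodesic $\alpha$ lies in $N^X_{K_{\ref{electric qc}}(C)}(\beta^{de})$, hence in $N^{\HAT{X}}_{K_{\ref{electric qc}}(C)}(\beta^{de})$. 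The replacement segments sit inside the $A_i$'s, which lie in the $1$-neighborhood in $\HAT{X}$ of the corresponding cone points on $\beta$, so $Hd_{\HAT{X}}(\beta^{de},\beta) \leq 1$ and therefore $\alpha \subset N^{\HAT{X}}_{K_{\ref{electric qc}}(C)+1}(\beta)$.

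The \emph{general case} is reduced to the bounded one by a slim-triangle argument. Fix $p \in \alpha$ and consider the $\HAT{X}$-geodesic triangle with vertices $x,y,p$, sides $\beta$ (opposite $p$), $\mu_1$ (from $x$ to $p$) and $\mu_2$ (from $p$ to $y$). Its insize points $q_1 \in \mu_1$, $q_2 \in \mu_2$, $q \in \beta$ are pairwise at $\HAT{X}$-distance at most $4\delta_1$. The $X$-subgeodesics of $\alpha$ from $x$ to $p$ and from $p$ to $y$ are $K_1$-quasiconvex in $\HAT{X}$ (Proposition~\ref{qc hat qc}), so they contain points $p_1, p_2$ with $d_{\HAT{X}}(p_i, q_i) \leq K_1$. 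Thus $p_1$ lies between $x$ and $p$ on $\alpha$, $p_2$ lies between $p$ and $y$, and with $C := 2K_1 + 4\delta_1$ we get $d_{\HAT{X}}(p_1, p_2) \leq C$ while each $p_i$ lies within $K_1 + 4\delta_1$ of $\beta$. Applying the bounded case to the $X$-sub-geodesic of $\alpha$ joining $p_1, p_2$ and an $\HAT{X}$-geodesic $\beta'$ between them gives $d_{\HAT{X}}(p, \beta') \leq K_{\ref{electric qc}}(C)+1$. As $|\beta'|_{\HAT{X}} \leq C$ and the endpoints of $\beta'$ are within $K_1 + 4\delta_1$ of $\beta$, the triangle inequality shows every point of $\beta'$ lies within $C + K_1 + 4\delta_1$ of $\beta$. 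Combining, $d_{\HAT{X}}(p, \beta)$ is bounded by a constant $R$ depending only on $\delta_0, k_0, \lambda_0$, as required.

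The main obstacle is the bounded case, where the key tool is Lemma~\ref{electric qc}. Once it is in hand, the slim-triangle reduction is standard hyperbolic geometry in $\HAT{X}$. The conceptual point is that the bounded case converts the problem of measuring $\alpha$ against $\beta$ in $\HAT{X}$ into a question of $X$-quasiconvexity, via de-electrification of a \emph{short} $\HAT{X}$-geodesic; the quasiconvexity constant in Lemma~\ref{electric qc} degenerates with length, but the length-cap imposed by the slim triangle keeps it uniform.
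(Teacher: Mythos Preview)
Your proof is correct and uses the same two key ingredients as the paper (Proposition~\ref{qc hat qc} and Lemma~\ref{electric qc}), but the organization is genuinely different. The paper does not separate the two Hausdorff inclusions; instead it takes the $\HAT X$-quasigeodesic $\gamma\subset\alpha$ supplied by Lemma~\ref{dotted quasi-geodesic in quasi-convex set}, subdivides it into unit pieces with endpoints $x_i\in\alpha$, de-electrifies a short $\HAT X$-geodesic between each consecutive pair to obtain uniform $X$-quasigeodesics $\alpha_i$, and then applies stability of quasigeodesics twice (once in $X$ to compare $\alpha$ with the concatenation $\alpha'$, once in $\HAT X$ to compare $\beta$ with $\gamma$). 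Your argument replaces this subdivision-and-stability scheme by a pointwise slim-triangle reduction in $\HAT X$: for each $p\in\alpha$ you locate nearby points $p_1,p_2\in\alpha$ with $d_{\HAT X}(p_1,p_2)$ uniformly bounded and close to $\beta$, and then invoke the ``bounded case'' once. The paper's route yields a single explicit constant via two applications of Theorem~\ref{stability}; your route avoids stability of quasigeodesics entirely at the cost of invoking the thin-triangle (insize) inequality in $\HAT X$, and is perhaps more transparent about why only a \emph{bounded-length} de-electrification is ever needed.
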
 
\proof By Proposition \ref{qc hat qc} $\alpha$ is a $K$-quasiconvex subset of $\HAT{X}$
where $K=K_{\ref{qc hat qc}}(\delta_0,k_0)$. Hence, by Lemma \ref{dotted quasi-geodesic in quasi-convex set} 
there is a $K_{\ref{dotted quasi-geodesic in quasi-convex set}}(K)$-quasigeodesic, say 
$\gamma:[a,b]\map \HAT{X}$, joining $x,y$ such that $\gamma([a,b])\subset \alpha$. Let 
$k= K_{\ref{dotted quasi-geodesic in quasi-convex set}}(K)$.
Now let $t_0=a<t_1<\cdots <t_{n-1}<t_n=b$ be points on $[a,b]$ such that $0<t_n-t_{n-1}\leq 1$ and
$t_{i+1}-t_i=1$ for all $i$, $0\leq i\leq n-2$. Let $x_i=\gamma(t_i)$.
Since for each $i$, $d_{\HAT{X}}(x_i,x_{i+1})\leq k|t_i-t_{i+1}|+k\leq 2k $, if $\beta_i$ is a geodesic in $\HAT{X}$ 
joining $x_i,x_{i+1}$ then any de-electrification of it, say $\beta^{de}_i$, is a  
$K_{\ref{electric qc}}(\delta_0,\lambda_0,2k)$-quasiconvex path in $X$
by Lemma \ref{electric qc}. Hence, once again by Lemma \ref{dotted quasi-geodesic in quasi-convex set} there is a 
$K_{\ref{dotted quasi-geodesic in quasi-convex set}}(K_{\ref{electric qc}}(\delta_0,\lambda_0,2k))$-quasigeodesic in 
$X$, say $\alpha_i$ joining $x_i, x_{i+1}$ contained in $\beta^{de}_i$. Let 
$k_1= K_{\ref{dotted quasi-geodesic in quasi-convex set}}(K_{\ref{electric qc}}(\delta_0,\lambda_0,2k))$.
Let $\alpha'$ be the  concatenation of the various $\alpha_i$'s and let $\beta'$ be the concatenation of the 
various $\beta_i$'s. 

Since $d_{\HAT{X}}(x_i,x_{i+1})\leq 2k$, the $\HAT{X}$-diameters of each $\beta_i$'s, $\beta^{de}_i$'s, and $\alpha_i$'s are
at most $2k+2$. It follows that $Hd_{\HAT{X}}(\beta', \gamma)$, and $Hd_{\HAT{X}}(\alpha', \gamma)$
are both at most $2k+2$. On the other hand, since $\alpha_i$'s are $k_1$-quasigeodesics in $X$,
by stability of quasigeodesics it is clear that $Hd_X(\alpha, \alpha')\leq D_{\ref{stability}}(\delta_0,k_1,k_1)$, whence 
$Hd_{\HAT{X}}(\alpha, \alpha') \leq D_{\ref{stability}}(\delta_0,k_1,k_1)$. Thus
$Hd_{\HAT{X}}(\alpha, \gamma)\leq Hd_{\HAT{X}}(\alpha, \alpha')+Hd_{\HAT{X}}(\alpha', \gamma)\leq 2k+2+D_{\ref{stability}}(\delta_0,k_1,k_1)$. 
Once again, since $\gamma$ is a $k$-quasigeodesic in $\HAT{X}$, by  stability of quasigeodesics 
we have that $Hd_{\HAT{X}}(\beta, \gamma)\leq D_{\ref{stability}}(\delta_{\ref{mj-dahmani}}(\delta_0,k_0),k,k)$. Hence, 
$Hd_{\HAT{X}}(\alpha, \beta)\leq Hd_{\HAT{X}}(\alpha, \gamma)+Hd_{\HAT{X}}(\gamma, \beta) \leq 
D_{\ref{kapovich-rafi main cor}}$ where 
$$ D_{\ref{kapovich-rafi main cor}}=2k+2+D_{\ref{stability}}(\delta_0,k_1,k_1)+D_{\ref{stability}}(\delta_{\ref{mj-dahmani}}(\delta_0,k_0),k,k).
\qed $$

\begin{cor}\label{electric cor2}
For all $D>0$ there is $D'>0$ where $D'\map \infty$ as $D\map \infty$ and such that the following holds:\\
Suppose $x_0, x, y\in X$ and $d_{\HAT{X}}(x_0, [x,y]_{\HAT{X}})\geq D$.
Then $d_X(x_0, [x,y]_X)\geq D'$
\end{cor}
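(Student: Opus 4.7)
The plan is to deduce the corollary directly from Lemma \ref{kapovich-rafi main cor} together with the elementary observation that the inclusion $X \hookrightarrow \HAT{X}$ is $1$-Lipschitz (every path in $X$ is also a path in $\HAT{X}$, so distances can only decrease when we pass to $\HAT{X}$). Set $D_0 := D_{\ref{kapovich-rafi main cor}}(\delta_0, k_0, \lambda_0)$, so that Lemma \ref{kapovich-rafi main cor} yields $Hd_{\HAT{X}}([x,y]_X, [x,y]_{\HAT{X}}) \leq D_0$.

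First I would pick a nearest point $p^* \in [x,y]_X$ to $x_0$ in the $X$-metric, so that $d_X(x_0, p^*) = d_X(x_0, [x,y]_X)$. By the Hausdorff bound there is a point $q^* \in [x,y]_{\HAT{X}}$ with $d_{\HAT{X}}(p^*, q^*) \leq D_0$. Combining the triangle inequality with $1$-Lipschitzness of the inclusion gives
\[
d_{\HAT{X}}(x_0, [x,y]_{\HAT{X}}) \leq d_{\HAT{X}}(x_0, q^*) \leq d_{\HAT{X}}(x_0, p^*) + D_0 \leq d_X(x_0, p^*) + D_0 = d_X(x_0, [x,y]_X) + D_0.
\]
Rearranging gives $d_X(x_0, [x,y]_X) \geq d_{\HAT{X}}(x_0, [x,y]_{\HAT{X}}) - D_0 \geq D - D_0$, so one may take $D' := D - D_0$, which clearly tends to $\infty$ as $D \to \infty$.

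There are no genuine obstacles here; the corollary is essentially a bookkeeping consequence of Lemma \ref{kapovich-rafi main cor}. The only point worth highlighting is the direction of the inequality supplied by the $1$-Lipschitz inclusion: distances in $\HAT{X}$ are no larger than the corresponding distances in $X$, which is exactly the direction one needs to convert the $\HAT{X}$-Hausdorff bound between $[x,y]_X$ and $[x,y]_{\HAT{X}}$ into the desired lower bound on the $X$-distance $d_X(x_0, [x,y]_X)$.
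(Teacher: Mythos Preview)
Your argument is correct and is precisely the intended one: the paper states this as an immediate corollary of Lemma~\ref{kapovich-rafi main cor} without giving a separate proof, and your derivation via the $\HAT{X}$-Hausdorff bound together with the $1$-Lipschitz inclusion is exactly the computation implicit in that labeling. One cosmetic point: your choice $D' = D - D_0$ is not literally positive when $D \leq D_0$, so to match the phrasing of the statement you might write $D' = \max\{D - D_0,\, \epsilon\}$ for some fixed small $\epsilon>0$ (the bound being vacuous in that range anyway), but the substantive content---that $D' \to \infty$ as $D \to \infty$---is unaffected.
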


\subsection{$\partial X$ vs $\partial \HAT{X}$}\label{section 3.2}
\smallskip
The next lemma follows in a straightforward way from \cite[Corollary 2.4]{kapovich-rafi}. Compare it with
\cite[Corollary 6.4]{abbott-manning}. It is an improvement on Lemma \ref{kapovich-rafi main cor}.
For the sake of completeness, we include a proof using Lemma \ref{kapovich-rafi main cor}.

\begin{lemma}\label{hor rays}
There are constants $K_{\ref{hor rays}}=K_{\ref{hor rays}}(\delta_0, k_0,\lambda_0)$,
and $D_{\ref{hor rays}}=D_{\ref{hor rays}}(\delta_0, k_0,\lambda_0)$
such that the following holds:\\
Suppose $\gamma$ is a geodesic ray in $X$ such that its image in $\HAT{X}$ is unbounded.
Then there is a $K_{\ref{hor rays}}$-quasigeodesic ray $\beta$ of $\HAT{X}$ such that 
$Hd_{\HAT{X}}(\beta, \gamma)\leq D_{\ref{hor rays}}$.
\end{lemma}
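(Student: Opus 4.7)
The plan is to construct $\beta$ as a concatenation of $\HAT{X}$-geodesics between well-chosen sample points along $\gamma$, and to prove that it is a quasigeodesic via Lemma \ref{concat geod} applied in the hyperbolic space $\HAT{X}$. Since $\gamma$ is unbounded in $\HAT{X}$, I would first pick a sequence $0 = t_0 < t_1 < t_2 < \cdots$ with $t_n \to \infty$ such that the $\HAT{X}$-distances $s_n := d_{\HAT{X}}(\gamma(0), \gamma(t_n))$ grow sufficiently fast, say $s_{n+1} \geq s_n + 2 D_{\ref{kapovich-rafi main cor}} + 10 \delta_{\ref{mj-dahmani}} + 10$, where $\delta_{\ref{mj-dahmani}} = \delta_{\ref{mj-dahmani}}(\delta_0, k_0)$. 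Such a choice is possible precisely because the image of $\gamma$ in $\HAT{X}$ is unbounded.

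Write $x_i = \gamma(t_i)$. Let $\alpha_n$ be an $\HAT{X}$-geodesic from $x_0$ to $x_n$ and let $\beta_i$ be an $\HAT{X}$-geodesic from $x_{i-1}$ to $x_i$. By Lemma \ref{kapovich-rafi main cor}, $Hd_{\HAT{X}}(\alpha_n, \gamma|_{[0, t_n]}) \leq D_{\ref{kapovich-rafi main cor}}$, so in particular each $x_i$ lies in the $D_{\ref{kapovich-rafi main cor}}$-neighborhood of $\alpha_n$ for $1 \leq i \leq n$, giving hypothesis (1) of Lemma \ref{concat geod} with $D = D_{\ref{kapovich-rafi main cor}}$. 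For hypothesis (2), if $y_i$ is a nearest point on $\alpha_n$ to $x_i$ (in $\HAT{X}$), then $|d_{\HAT{X}}(x_0, y_i) - d_{\HAT{X}}(x_0, x_i)| \leq D_{\ref{kapovich-rafi main cor}}$, so the spacing condition on $s_i$ forces $d_{\HAT{X}}(x_0, y_{i+1}) > d_{\HAT{X}}(x_0, y_i)$. Applying Lemma \ref{concat geod} inside $\HAT{X}$, the infinite concatenation $\beta := \beta_1 * \beta_2 * \cdots$ is a $K_{\ref{concat geod}}(D_{\ref{kapovich-rafi main cor}})$-quasigeodesic ray in $\HAT{X}$ starting at $\gamma(0)$, so we can take $K_{\ref{hor rays}} = K_{\ref{concat geod}}(D_{\ref{kapovich-rafi main cor}})$.

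It remains to bound $Hd_{\HAT{X}}(\beta, \gamma)$. For each $i$, Lemma \ref{kapovich-rafi main cor} applied to the geodesic segment $\gamma|_{[t_{i-1}, t_i]}$ of $X$ and the $\HAT{X}$-geodesic $\beta_i$ between the same endpoints gives $Hd_{\HAT{X}}(\beta_i, \gamma|_{[t_{i-1}, t_i]}) \leq D_{\ref{kapovich-rafi main cor}}$. Summing these covers, both inclusions $\beta \subset N_{D_{\ref{kapovich-rafi main cor}}}^{\HAT{X}}(\gamma)$ and $\gamma \subset N_{D_{\ref{kapovich-rafi main cor}}}^{\HAT{X}}(\beta)$ follow, so $D_{\ref{hor rays}} = D_{\ref{kapovich-rafi main cor}}$ suffices.

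The main technical point will be making the selection of the $t_i$'s rigorous and verifying the monotonicity of nearest point projections. The crux is that the growth rate of $s_n$ must beat twice the fellow-traveling constant $D_{\ref{kapovich-rafi main cor}}$, which requires the image of $\gamma$ in $\HAT{X}$ to be genuinely unbounded and not merely non-compact; the hypothesis of the lemma exactly supplies this. Everything else is a routine application of Lemmas \ref{concat geod} and \ref{kapovich-rafi main cor}.
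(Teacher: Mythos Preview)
Your proposal is correct and follows essentially the same argument as the paper: choose points $x_n = \gamma(t_n)$ along $\gamma$ whose $\HAT{X}$-distances from $x_0$ increase by more than $2D_{\ref{kapovich-rafi main cor}}$, use Lemma~\ref{kapovich-rafi main cor} to verify the hypotheses of Lemma~\ref{concat geod} for the $\HAT{X}$-geodesics $\alpha_{0,n}$, and conclude that the concatenation of the $\HAT{X}$-geodesics $[x_{i-1},x_i]_{\HAT{X}}$ is the desired quasigeodesic ray with $K_{\ref{hor rays}} = K_{\ref{concat geod}}(D_{\ref{kapovich-rafi main cor}})$ and $D_{\ref{hor rays}} = D_{\ref{kapovich-rafi main cor}}$. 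The extra slack $10\delta_{\ref{mj-dahmani}}+10$ in your spacing condition is harmless but unnecessary; $2D_{\ref{kapovich-rafi main cor}}$ already suffices for the monotonicity of the projections.
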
 
\begin{proof}
Let $\gamma:[0,\infty)\map X$ be a geodesic ray in $X$ such that its image in 
$\HAT{X}$ is unbounded. Let $\gamma(0)=x_0$. Then for any $t\in [0,\infty)$ if 
$\alpha$ is a geodesic in $\HAT{X}$ joining $x_0$ to $\gamma(t)$ then by Lemma \ref{kapovich-rafi main cor}, $Hd_{\HAT{X}}(\gamma([0,t]),\alpha)\leq D_{\ref{kapovich-rafi main cor}}$. 
Now, let $(x_n)_{n\in\mathbb{N}}$ be a sequence of points on $\gamma$ such that
$d_{\HAT{X}}(x_0,x_n)>d_{\HAT{X}}(x_0, x_{n-1})+2D_{\ref{kapovich-rafi main cor}}$ for all $n\in \NN$.
Let $\alpha_{m,n}$ be a geodesic in $\HAT{X}$ joining $x_m$ and $x_n$ for all $m,n\in \NN$, $m<n$.
Let $y_{i,n}$ be a nearest point of $\alpha_{0,n}$ from $x_i$, $1\leq i\leq n$.
We note that $d_{\HAT{X}}(x_i,y_{i,n})\leq D_{\ref{kapovich-rafi main cor}}$, $1\leq i\leq n$. 
Hence for all $n\in \NN$ and $1\leq i\leq (n-1)$, we have
$d_{\HAT{X}}(x_0,y_{i+1,n})\geq d_{\HAT{X}}(x_0,x_{i+1})-D_{\ref{kapovich-rafi main cor}}> d_{\HAT{X}}(x_0,x_i)+2D_{\ref{kapovich-rafi main cor}}-D_{\ref{kapovich-rafi main cor}}
\geq d_{\HAT{X}}(x_0,y_i)$. Hence, it immediately follows from Lemma \ref{concat geod}
that the concatenation of the $\alpha_{n,n+1}$'s, $n\geq 0$ is a $K_{\ref{concat geod}}(D_{\ref{kapovich-rafi main cor}})$-quasigeodesic
in $\HAT{X}$. Let us call it $\beta$. Finally, since $Hd_{\HAT{X}}(\alpha_{n,n+1}, \gamma([n,n+1]))\leq D_{\ref{kapovich-rafi main cor}}$ for all $n\geq 0$ by Lemma \ref{kapovich-rafi main cor}, it follows that 
$Hd_{\HAT{X}}(\beta, \gamma)\leq D_{\ref{kapovich-rafi main cor}}$. Hence we may choose $K_{\ref{hor rays}}=K_{\ref{concat geod}}(D_{\ref{kapovich-rafi main cor}})$ and $D_{\ref{hor rays}}=D_{\ref{kapovich-rafi main cor}}$.
\end{proof}


\begin{conv}
Recall that $QGeod(X)$ denotes the asymptotic classes of all quasigeodesic rays in $X$. 
The following sets will be important for the rest of this subsection.

(1) $\partial_h X=\{\xi\in \partial X: \xi=\gamma(\infty), \gamma\in QGeod(X) 
\mbox{ is an unbounded subset of}\,\, \HAT{X}\}$.

(2) $\partial_v X=\cup_{i\in I} \Lambda(A_i)$ 

Intuitively we think of the quasigeodesic rays converging to points of $\partial_h X$ as 
the {\em horizontal} ones relative to the map $X\map \HAT{X}$ and those converging 
to points of $\partial_v X$ as {\em vertical}. We note that $\partial_v X\cap \partial_h X=\emptyset$
by Lemma \ref{limit set intersection}. Also $\partial_v X\cup \partial_h X\BLUE{\subset} \partial_s X$.
But this inclusion is not an equality in general. 
\end{conv}

However, Lemma \ref{hor rays} and Corollary \ref{electric cor2} immediately imply the following result
which was proved first in \cite[Theorem 3.2]{dowdall-taylor}. We include a sketch of proof for the
sake of completeness.

\begin{theorem}\textup{(\cite{dowdall-taylor})}\label{electric cor} \textup{(1)}  
We have a continuous map $\phi_X:\partial_h X\map \partial \HAT{X}$ such that the following holds: Suppose $\{x_n\}$ 
is a sequence in $X$. If $\{x_n\}$ converges to a point of $\xi\in \partial_h X$ then $\{x_n\}$ converges 
to $\phi_X(\xi)\in \partial \HAT{X}$.

\textup{(2)} There is a natural continuous inverse $\psi_X$ of $\phi_X $.
In particular, $\phi_X$ is a homeomorphism.
\end{theorem}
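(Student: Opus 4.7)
For part (1), given $\xi\in \partial_h X$, fix a geodesic ray $\gamma$ in $X$ with $\gamma(\infty)=\xi$ that is unbounded in $\HAT{X}$ (such a $\gamma$ exists by combining the definition of $\partial_h X$ with Lemma \ref{bdry basic prop}(2) and stability of quasigeodesics). Apply Lemma \ref{hor rays} to produce a $K_{\ref{hor rays}}$-quasigeodesic ray $\beta$ in $\HAT{X}$ with $Hd_{\HAT{X}}(\beta,\gamma)\leq D_{\ref{hor rays}}$, and define $\phi_X(\xi):=\beta(\infty)\in\partial \HAT{X}$. Well-definedness is immediate: any two choices $\gamma,\gamma'$ satisfy $Hd_X(\gamma,\gamma')<\infty$, hence $Hd_{\HAT{X}}(\gamma,\gamma')<\infty$, so the corresponding $\HAT{X}$-rays $\beta,\beta'$ have finite Hausdorff distance in $\HAT{X}$ and determine the same point of $\partial\HAT{X}$.

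Next, for the convergence statement in (1), suppose $x_n\to\xi$ in $X$. By Lemma \ref{convg criteria 2} the $X$-geodesics $[x_0,x_n]_X$ fellow-travel $\gamma$ in $X$ for longer and longer times. Lemma \ref{kapovich-rafi main cor} gives $Hd_{\HAT{X}}([x_0,x_n]_X,[x_0,x_n]_{\HAT{X}})\leq D_{\ref{kapovich-rafi main cor}}$, and combined with $Hd_{\HAT{X}}(\beta,\gamma)\leq D_{\ref{hor rays}}$ this shows that $[x_0,x_n]_{\HAT{X}}$ fellow-travels $\beta$ in $\HAT{X}$ for longer and longer times; Lemma \ref{convg criteria 2} (now in $\HAT{X}$) yields $x_n\to \beta(\infty)=\phi_X(\xi)$. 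Continuity of $\phi_X$ then follows by a diagonal argument: given $\xi_n\to\xi$ in $\partial_h X$, pick sequences $z^n_k\to\xi_n$ in $X$, apply Lemma \ref{diagonal sequence} in $X$ to obtain $z^n_{k_n}\to\xi$, and use the convergence property twice (for each $\xi_n$ and for $\xi$) to deduce $\phi_X(\xi_n)\to \phi_X(\xi)$ in $\HAT{X}$, appealing to Corollary \ref{new CT lemma} if needed.

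For part (2), given $\eta\in\partial\HAT{X}$, fix a basepoint $x_0\in X$ and choose a uniform quasigeodesic ray $\beta$ in $\HAT{X}$ from $x_0$ to $\eta$. Select a sequence of points $z_n\in \beta\cap X$ (away from cone points) with $d_{\HAT{X}}(x_0,z_n)\to\infty$. Because $\beta$ is a quasigeodesic in $\HAT{X}$, we have $(z_m\cdot z_n)^{\HAT{X}}_{x_0}\to\infty$, and Corollary \ref{electric cor2} upgrades this to $(z_m\cdot z_n)^{X}_{x_0}\to\infty$. Hence $\{z_n\}$ converges to some $\xi\in\partial_s X$. To see $\xi\in\partial_h X$, take a $k_{\ref{bdry basic prop}}$-quasigeodesic ray $\gamma$ of $X$ from $x_0$ to $\xi$; then the $X$-geodesics $[x_0,z_n]_X$ fellow-travel $\gamma$ in $X$ for longer and longer times, while by Lemma \ref{kapovich-rafi main cor} each $[x_0,z_n]_X$ contains a point at $\HAT{X}$-distance at least $d_{\HAT{X}}(x_0,z_n)-D_{\ref{kapovich-rafi main cor}}$ from $x_0$; so $\gamma$ itself is unbounded in $\HAT{X}$ and $\xi\in\partial_h X$. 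Set $\psi_X(\eta):=\xi$. The identities $\phi_X\circ\psi_X=\id$ and $\psi_X\circ\phi_X=\id$ follow directly from the construction and Lemma \ref{hor rays}, while continuity of $\psi_X$ is proved exactly as for $\phi_X$, using the convergence property from part (1) and a diagonal argument via Lemma \ref{diagonal sequence} in $\HAT{X}$.

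\textbf{Anticipated obstacle.} The spaces are not assumed proper, so one cannot extract limiting geodesic rays by Arzelà--Ascoli. The only way to certify that a sequence $\{z_n\}$ converges at infinity is via the Cauchy criterion for Gromov products, which is why Corollary \ref{electric cor2} (and through it Lemma \ref{kapovich-rafi main cor}) does the heavy lifting for part (2). A related subtlety is verifying $\psi_X(\eta)\in\partial_h X$ rather than merely $\partial_s X$: here one must carefully exploit that the $X$-geodesics $[x_0,z_n]_X$ are $\HAT{X}$-close to $[x_0,z_n]_{\HAT{X}}$, so that the $X$-limit ray inherits the unboundedness of $\beta$ in $\HAT{X}$.
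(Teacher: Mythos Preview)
Your proposal is correct and follows essentially the same route as the paper: Lemma \ref{hor rays} to define $\phi_X$, Lemma \ref{kapovich-rafi main cor} together with Lemma \ref{convg criteria 2} for the convergence statement in (1), and Corollary \ref{electric cor2} plus Lemma \ref{kapovich-rafi main cor} for (2). Your treatment of (2) is in fact more explicit than the paper's sketch, which simply says that Corollary \ref{electric cor2} forces $\{x_n\}$ to converge in $\partial X$ and that Lemma \ref{kapovich-rafi main cor} places the limit in $\partial_h X$.

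One small technical caveat: you begin (1) by fixing a \emph{geodesic} ray $\gamma$ in $X$ with $\gamma(\infty)=\xi$, but $X$ is not assumed proper, so such a ray need not exist; Lemma \ref{bdry basic prop}(2) only supplies a uniform quasigeodesic ray. The paper's sketch in fact uses a quasigeodesic ray. This is harmless --- either work with the quasigeodesic ray directly (the proof of Lemma \ref{hor rays} goes through unchanged after passing to $X$-geodesic segments $[x_0,\gamma(n)]_X$, which are $\HAT X$-close to $\HAT X$-geodesics by Lemma \ref{kapovich-rafi main cor}), or simply note that your argument only ever uses geodesic \emph{segments} $[x_0,x_n]_X$ anyway. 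For continuity of $\phi_X$ you invoke a diagonal argument via Lemma \ref{diagonal sequence}, whereas the paper appeals directly to Lemma \ref{convg criteria 2} and Lemma \ref{hor rays}; both are fine, though your route needs the extra observation (essentially Lemma \ref{new CT lemma 1}) that from $z^n_{k_n}\to\phi_X(\xi)$ and $z^n_k\to\phi_X(\xi_n)$ one can indeed deduce $\phi_X(\xi_n)\to\phi_X(\xi)$.
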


{\em Sketch of proof:} (1) Suppose $x_n\map\xi\in \partial_h X$. Let $\gamma$ be a quasigeodesic ray in $X$ with $\gamma(\infty)=\xi$
and $\gamma(0)=x_1$. Then $[x_1, x_n]_X$ fellow travels $\gamma$ in $X$ for longer and longer time 
as $n\map\infty$ by Lemma \ref{convg criteria 2}. It then follows from Lemma \ref{kapovich-rafi main cor} that $[x_1,x_n]_{\HAT{X}}$ 
fellow travels $\gamma$ in $\HAT{X}$ for longer and longer time as $n\map \infty$. 
By Lemma \ref{hor rays} there is a $K_{\ref{hor rays}}(\delta_0,k_0,\lambda_0)$-quasigeodesic ray, say $\beta$ such that $Hd(\gamma,\beta)\leq D_{\ref{hor rays}}(\delta_0,k_0,\lambda_0)$ in $\HAT{X}$. It follows that $x_n\map \beta(\infty)$ in $\HAT{X}$.
This shows that $\phi_X$ is well-defined and proves (1). Continuity of $\phi_X$ follows in the same way using
Lemma \ref{convg criteria 2} and Lemma \ref{hor rays}.

(2) If $\{x_n\}$ is a sequence in $X$ and $x_n\map \eta\in \partial \HAT{X}$ 
we can use Corollary \ref{electric cor2} to conclude that 
$\{x_n\}$ converges to a unique point of $\partial X$. Then another application of Lemma \ref{kapovich-rafi main cor}
would show that the limit is in $\partial_h X$. This gives the map $\psi_X$.
Then continuity of $\psi_X$ follows from Corollary \ref{electric cor2}
and the fact that $\psi_X=\phi^{-1}_X$ follow from the definitions of these maps. \qed

\smallskip
However to complete the discussion about the relation between the boundaries of $X$ and $\HAT{X}$ we 
shall need the following additional hypothesis on the collection $\{A_i\}$.

\begin{defn}
Suppose $Z$ is a metric space and $\{Z_i\}_{i\in I}$ is a collection of subsets of $Z$.
Then we call the collection $\{Z_i\}_{i\in I}$ {\em locally finite} if for all $z\in Z$ and $R>0$
$\{i\in I:B(z,R)\cap Z_i\neq \phi\}$ is a finite set.
\end{defn}

\begin{example}
Suppose $G$ is a finitely generated group and $X$ is the Cayley graph of $G$ with respect to a finite
generating set. If $H_1, H_2,\cdots, H_n$ are subgroups of $G$ then $\mathcal I=\{gH_i: g\in G, 1\leq i\leq n\}$
is a locally finite family of subsets of $X$ since distinct cosets of any same subgroup are disjoint 
and a finite radius ball of $X$ has only finitely many elements of $G$.
\end{example}
The next proposition is motivated by an analogous result proved in \cite[Lemma 6.12]{abbott-manning}.
This is complimentary to Lemma \ref{hor rays}.
\begin{prop}\label{electric main lemma}
Suppose the collection of quasiconvex subsets $\{A_i\}$ is locally finite.
Then for a quasigeodesic ray $\gamma$ of $X$, $\gamma(\infty)\in \Lambda(A_i)$ for some $i\in I$
if and only if $\gamma$ is a bounded set in $\HAT{X}$. 
\end{prop}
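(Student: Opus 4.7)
The plan is to prove the two implications separately. For the direction $\gamma(\infty)\in\Lambda(A_i)\Rightarrow \gamma$ is bounded in $\HAT{X}$, Lemma \ref{ray in qc} produces a $K_{\ref{ray in qc}}$-quasigeodesic ray $\alpha\subset A_i$ with $\alpha(\infty)=\gamma(\infty)$. Since $\gamma$ and $\alpha$ are asymptotic quasigeodesic rays in the $\delta_0$-hyperbolic space $X$, stability of quasigeodesics gives $Hd_X(\gamma,\alpha)<\infty$. Because $A_i$ has $\HAT{X}$-diameter $2$, a bounded $X$-neighborhood of $A_i$ is bounded in $\HAT{X}$, and therefore $\gamma$ is bounded in $\HAT{X}$.

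For the converse, set $x_0=\gamma(0)$ and $M=\diam_{\HAT{X}}(\gamma\cup\{x_0\})$. For each $n$ pick an $\HAT{X}$-geodesic $\beta_n$ from $x_0$ to $\gamma(n)$ of length $\leq M$, and decompose it as $\beta_n=\sigma_{n,0}*e_{n,1}*\sigma_{n,1}*\cdots*e_{n,k_n}*\sigma_{n,k_n}$, where each $\sigma_{n,j}$ is an $X$-geodesic segment and each $e_{n,j}$ is the length-two electric path through the cone on some $A_{i_{n,j}}$, with entry $y_{n,j}$ and exit $z_{n,j}$. Then $k_n\leq M/2$ and $\sum_j|\sigma_{n,j}|_X\leq M$. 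Since $d_X(x_0,\gamma(n))\to\infty$, the triangle inequality along the de-electrification of $\beta_n$ forces $\sum_j d_X(y_{n,j},z_{n,j})\to\infty$. After passing to a subsequence I may assume $k_n=k$ is constant and that each $d_{n,j}:=d_X(y_{n,j},z_{n,j})$ is either uniformly bounded or tends to infinity; let $j^*$ be the smallest index with $d_{n,j^*}\to\infty$. Then $d_X(x_0,y_{n,j^*})$ stays bounded, so local finiteness of $\{A_i\}$ forces $A_{i_{n,j^*}}$ to take only finitely many values. After a further subsequence, $A_{i_{n,j^*}}=A^*$ is fixed; then $z_{n,j^*}\in A^*$ with $d_X(x_0,z_{n,j^*})\to\infty$, so $A^*$ is unbounded in $X$.

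The remaining step, and the main obstacle, is to conclude $\gamma(\infty)\in\Lambda(A^*)$. By Lemma \ref{limit set intersection} it suffices to find $D>0$ such that $\gamma\cap N_D(A^*)$ is unbounded in $X$. To achieve this I would compare $\beta_n$ with an $X$-geodesic $\mu_n$ from $x_0$ to $\gamma(n)$: Lemma \ref{kapovich-rafi main cor} gives $Hd_{\HAT{X}}(\mu_n,\beta_n)\leq D_{\ref{kapovich-rafi main cor}}$, so $\mu_n$ passes $\HAT{X}$-close to $z_{n,j^*}\in A^*$, while stability of $X$-quasigeodesics places $\mu_n$ within bounded $X$-Hausdorff distance of $\gamma$. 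The difficulty is that $\HAT{X}$-proximity to $A^*$ does not a priori imply $X$-proximity, since chains of distinct cones could produce $\HAT{X}$-shortcuts. Local finiteness of $\{A_i\}$, combined with Proposition \ref{qc hat qc} (which makes $A^*$ quasiconvex in $\HAT{X}$), is the key input used to rule out such chains and upgrade $\HAT{X}$-closeness to $X$-closeness along $\gamma$, producing points of $\gamma$ arbitrarily far out in a bounded $X$-neighborhood of $A^*$ and hence $\gamma(\infty)\in\Lambda(A^*)$.
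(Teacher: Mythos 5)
Your forward implication is fine and is essentially the paper's (the paper invokes Lemma \ref{limit set intersection}, which packages the same stability argument you give). In the converse direction your opening moves --- decomposing the $\HAT{X}$-geodesics $\beta_n$, passing to a subsequence with a constant number of cone points, and using local finiteness to freeze the first cone met near $x_0$ --- parallel the paper's. But the step you yourself label ``the main obstacle'' is the entire content of the proposition, and you give no argument for it, only a list of ingredients you hope will suffice. Worse, the statement you are aiming at there is false: $\gamma(\infty)$ need not lie in $\Lambda(A^*)$ when $A^*$ is the first cone along $\beta_n$ whose jump $d_X(y_{n,j^*},z_{n,j^*})$ is unbounded. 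Take $X$ to be a tripod tree with centre $v$ and rays $\rho_0,\rho_1,\rho_2$ issuing from $v$, set $x_0=\rho_0(10)$, cone off the two convex lines $A_1=\rho_0\cup\rho_1$ and $A_2=\rho_1\cup\rho_2$, and let $\gamma$ be the geodesic ray running from $x_0$ through $v$ and out along $\rho_2$. For large $n$ one has $d_{\HAT{X}}(x_0,\gamma(n))=4$, realized by the geodesic $\beta_n$ that jumps through the cone of $A_1$ from $x_0$ to $\rho_1(n)$ and then through the cone of $A_2$ from $\rho_1(n)$ to $\gamma(n)$. For this perfectly legitimate choice of $\beta_n$ your $A^*$ is $A_1$, with first jump of length $n+10\to\infty$, yet $\gamma(\infty)=\rho_2(\infty)\in\Lambda(A_2)\setminus\Lambda(A_1)$. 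The obstruction is exactly the one you name: a cone jump costs $2$ in $\HAT{X}$ however long it is in $X$, so an $\HAT{X}$-geodesic may make an arbitrarily long, completely irrelevant excursion inside its first cone. No ``upgrade from $\HAT{X}$-closeness to $X$-closeness'' can exist here, since $\rho_1(n)$ stays at $\HAT{X}$-distance $2$ from $\gamma$ while its $X$-distance to $\gamma$ is $n$.

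The paper avoids this by inducting on the number $m$ of cone points and replacing your choice of $j^*$ by a dichotomy. One freezes the \emph{first} cone $A_1$ in order of appearance (not the first with an unbounded jump) and asks whether $S=\{t:\gamma(t)\in N_K(A_1')\}$ is unbounded, where $K$ is the quasiconvexity constant of the union of $\alpha_n\cap X$ with the cones it meets. If $S$ is unbounded, Lemma \ref{limit set intersection} gives $\gamma(\infty)\in\Lambda(A_1)$ and one is done. If $S$ is bounded with $t_1=\max S$, then $\gamma(t_1+1)$ lies within $K+D$ of one of the later cones; local finiteness freezes this to a single $A_2$, and one reroutes $\alpha_n$ through a uniformly short initial segment ending at the cone point of $A_2$, producing uniformly bounded paths with at most $m-1$ cones, to which the inductive hypothesis applies. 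The cone that finally captures $\gamma(\infty)$ is the one at which this process first lands in the unbounded-$S$ case --- in the example above it is $A_2$, not your $A^*$. To repair your write-up you would need to discard the selection of $j^*$ and carry out this induction (or an equivalent mechanism that tracks where $\gamma$ itself accumulates, rather than where the $\HAT{X}$-geodesics happen to jump).
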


\proof  If $\gamma(\infty)\in \Lambda(A_i)$ for some $A_i$ then by Lemma \ref{limit set intersection}
$\gamma$ is contained in a finite neighborhood of $A_i$ in $X$. Thus $\gamma$ is a bounded
subset of $\HAT{X}$. 

Conversely suppose $\gamma$ is a bounded subset of $\HAT{X}$.
Let $D=Diam_{\HAT{X}}(\gamma)$. Let $\gamma(0)=x_0$ and let 
$\alpha_n$ be a geodesic in $\HAT{X}$ joining $x_0$ and $\gamma(n)$. 
Since $l(\alpha_n)\leq D$, each $\alpha_n$ passes through the cone points of at most $[D]$ 
distinct $k$-quasiconvex subsets $A_i$'s, where $[D]$ is the maximum integer smaller than or equal to $D$.
Hence, if necessary passing to a subsequence we may assume that all of the $\alpha_n$'s pass through the 
same number $m$ of cone points. {\em The proof is by induction on $m$.} 

Suppose for all $n\in \NN$, $A_1^n,A_2^n,...,A_m^n$ is the sequence of quasiconvex subsets from the collection $\{A_i\}$
whose cone points appear in this order on $\alpha_n$. Note that $d_X(x_0, A_1^n)\leq D$ for all $n$. 
Hence, by local finiteness of $\{A_i\}$, $\{A^n_1\}_{n\in \NN}$ is a finite set. Thus, up to passing to a 
further subsequence if necessary, we can assume that $A^n_1=A_1$ for all $n$. 
We note that for each $n$, $\alpha_n\cap X$ is the union of $m+1$ geodesic segments of $X$ sum
whose lengths is at most $D$. 

$m=1$: 
We note that $\alpha^{de}_n$ is the concatenation of two geodesic segments (of length at most $D$) in $X$ 
and a $\lambda_0$-quasigeodesic segment in $X$. By Lemma \ref{qc lemma 1}, the geodesic segments are $\delta_0$-quasiconvex and
the quasigeodesic segment is $D_{\ref{qc lemma 1}}(\delta_0, \lambda_0)$-quasiconvex.
Hence, by Lemma \ref{finite qc} $\alpha^{de}_n$ is 
$D_{\ref{finite qc}}(\delta_0, \delta_0+D_{\ref{qc lemma 1}}(\delta_0, \lambda_0),3)$-quasiconvex in $X$.
Let $K'=D_{\ref{finite qc}}(\delta_0, \delta_0+D_{\ref{qc lemma 1}}(\delta_0, \lambda_0),3)$.
Hence, $\gamma([0,n])$ is contained in a $(D+K')$-neighborhood of $A_1$ for all $n$.
It follows that $\gamma$ is contained in a $(D+K')$-neighborhood of $A_1$.
Therefore, $\gamma(\infty)\in \Lambda(A_1)$ by Lemma \ref{limit set intersection}.

$m>1$: We note that for each $n$, $\alpha_n\cap X$ is a disjoint union of $m+1$
geodesic segments of $X$. Now, the union of $\alpha_n\cap X$ and the sets $A_1=A^n_1,\cdots, A^n_m$ is a 
$K=D_{\ref{finite qc}}(\delta_0,k_0,m)$-quasiconvex subset of $X$ by Lemma  \ref{finite qc}. 
Let $S=\{t\in [0,\infty): \gamma(t)\in N_K(A'_1)\}$
where we let $A'_1$ to be the union of $A_1$ and the segment of $\alpha_n$ in $X$ from $x_0$ to $A_1$. There are two cases to
consider.

{\bf Case 1:} \underline{Suppose $S$ is unbounded.}
We note that if $\gamma(t_n)\in N_K(A'_1)$ for an unbounded sequence of numbers $t_n\in [0,\infty)$
then the nearest point projection of $\gamma$ on $A_1$ is of infinite diameter. Hence, by Lemma 
\ref{limit set intersection}, $\gamma(\infty)\in \Lambda(A_1)$ and we are done.

{\bf Case 2:} \underline{Suppose $S$ is bounded.} Let $t_1=\max S$.
 However, in this case, for all $n>t_1+1$ and $\gamma(t_1+1)$ is within the $(K+D)$-neighborhood of $A^n_{i_n}$ for some 
$i_n, 1<i_n\leq k$. Note that $d_X(x_0, A^n_{i_n})\leq t_1+1+K+D$. Using local finiteness
of the collection $\{A_i\}$, we see that the collection $\{A^n_{i_n}\}$ is finite. Hence, we may pass to a further 
subsequence and assume that $A^n_{i_n}$ is a fixed quasiconvex set, say $A_2\neq A_1$. 
we now replace each $\alpha_n$ by a new path
constructed by taking the concatenation of a geodesic $[x_0,x_2]_X$ in $X$ joining $x_0$ to $x_2\in A_2$ such that
$d_X(x_0,x_2)\leq t_1+1+K+D$, a segment joining joining $x_2$ to the cone point of $A_2$ followed by the segment
of $\alpha_n$ from the cone point of $A_2$ to $\gamma(n)$. Each of these paths have length at most $D+1+(t_1+1+K+D)$
and they go through the cone points of at most $k-1$ quasiconvex sets from the collection $\{A_i\}$.
Hence, we are done by induction on $m$. \qed

\smallskip

We note that the conclusion of Proposition \ref{electric main lemma} fails to hold if the collection $\{A_i\}$
is not locally finite as the following example shows.
\begin{example}
Let $X=[0,\infty)$ and $A_i=[0,i]$, $i\in \NN$. Then $X$ is hyperbolic and $A_i$'s are all convex subsets of $X$
of finite diameter. Hence, $\Lambda(A_i)=\emptyset$, $i\in \NN$. However, the geodesic 
$\gamma:[0,\infty)\map X$, $\gamma(t)=t$, is bounded in $\hat{X}$ since $\HAT{X}$ has diameter $2$. 
\end{example}

Proposition \ref{electric main lemma} and Lemma \ref{hor rays} immediately give the following:
\begin{theorem}\label{elec union thm}
If the collection of quasiconvex sets $\{A_i\}$ is a locally finite family in $X$ then
$\partial X=\partial_h X\cup \partial_v X$.
\end{theorem}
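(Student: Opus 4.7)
The approach is to exploit that the inclusion $\partial_h X \cup \partial_v X \subset \partial X$ is built into the definitions (the first by fiat, and each $\Lambda(A_i) \subset \partial X$ by the definition of the limit set), so the substantive content of the theorem is only the reverse inclusion $\partial X \subset \partial_h X \cup \partial_v X$.

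For the reverse inclusion I would fix an arbitrary $\xi \in \partial X$ and pick a geodesic ray $\gamma:[0,\infty)\to X$ with $\gamma(\infty)=\xi$. Looking at the image of $\gamma$ under the inclusion $X\hookrightarrow \HAT X$, exactly two cases arise. If $\gamma$ is an unbounded subset of $\HAT X$, then since a geodesic ray is in particular a quasigeodesic ray, $\gamma$ itself witnesses $\xi \in \partial_h X$ by the very definition of $\partial_h X$. If instead $\gamma$ is bounded in $\HAT X$, then Proposition \ref{electric main lemma}, applied to the quasigeodesic ray $\gamma$ and crucially invoking the local finiteness of $\{A_i\}$, yields $\gamma(\infty)\in \Lambda(A_i)$ for some $i\in I$, hence $\xi \in \partial_v X$. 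The two cases exhaust all possibilities, so $\xi \in \partial_h X \cup \partial_v X$ in either case.

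In short, the theorem is essentially a repackaging of Proposition \ref{electric main lemma} together with the definition of $\partial_h X$, so there is no genuine obstacle beyond what Proposition \ref{electric main lemma} already provides; the real technical input (the inductive argument on the number of cone points crossed by a geodesic in $\HAT X$, together with a local-finiteness pigeonhole) was absorbed into that proposition. Lemma \ref{hor rays}, while cited in the author's statement, is not strictly necessary for this particular deduction; it supplies complementary structure on $\partial_h X$ (namely that such points correspond to genuine quasigeodesic rays of $\HAT X$, giving the map $\phi_X$ of Theorem \ref{electric cor}) but is not required for the union equality itself.
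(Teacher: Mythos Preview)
Your argument is correct and is precisely the paper's approach: the paper simply asserts that the theorem follows immediately from Proposition \ref{electric main lemma} and Lemma \ref{hor rays}, and your dichotomy (unbounded in $\HAT X$ gives $\xi\in\partial_h X$ by definition, bounded gives $\xi\in\partial_v X$ by Proposition \ref{electric main lemma}) is exactly that deduction spelled out; your observation that Lemma \ref{hor rays} is not strictly required for the bare union equality is also correct. One small technical point: since $X$ is not assumed proper, a geodesic ray to an arbitrary $\xi\in\partial X$ need not exist, so you should instead choose $\gamma$ to be a $k_{\ref{bdry basic prop}}$-quasigeodesic ray via Lemma \ref{bdry basic prop}(2); Proposition \ref{electric main lemma} is stated for quasigeodesic rays and the definition of $\partial_h X$ uses $QGeod(X)$, so the argument goes through unchanged.
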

We note that a special case of this theorem was first proved by Abbott and Manning.
See \cite[Theorem 6.7, Remark 6.8, and Theorem 1.6]{abbott-manning}. 


\subsection{The main theorem}\label{section main thm}
In this subsection we shall prove the main technical theorem of our paper.
Following is our set up. Since the statement of the theorem is qualitative
rather than quantitative, we can make these assumptions for the sake of the
proof.
\begin{enumerate}
\item $Y\subset X$ are $\delta_0$-hyperbolic metric spaces where $Y$ has
the induced length metric from $X$. Let $\iota:Y \map X$ denote the inclusion map.
\item $\{B_i\}_{i\in I}$ is a locally finite collection of $k_0$-quasiconvex subsets in $X$. 
\item The inclusion $Y\map X$ is a $\rho_0$-proper embedding. 
\item $\{A_j\}$ is a collection of subsets of $Y$ such that each 
$A_j$ is contained in $B_i\cap Y$ for some $i$ and each $A_j$ is $k_0$-quasiconvex in
$X$ as well as in $Y$. 
\item Let $\HAT{X}$ be the space obtained from $X$ by coning the sets $B_i$ and let 
$\HAT{Y}$ be the space obtained from $Y$ by coning the sets $\{A_j\}$. Let
$\delta'_0=\delta_{\ref{mj-dahmani}}(\delta_0,k_0)$ Then both $\HAT{X}$ and $\HAT{Y}$ are $\delta'_0$-hyperbolic.
\end{enumerate}

We shall assume that $x_0\in Y$ is a fixed base point during the proof. We shall use the notation
$\phi_X$ and $\phi_Y$ from Theorem \ref{electric cor}.



\begin{theorem}\label{electric ct}
Suppose the inclusion $\HAT{Y}\map \HAT{X}$ satisfies Mitra's criterion. 
Then the inclusion $Y\map X$ admits the CT map $\partial Y\map \partial X$. Moreover, the CT map
is injective if and only if the CT map $\partial \HAT{Y}\map \partial \HAT{X}$ is injective.
\end{theorem}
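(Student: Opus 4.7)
The plan is to verify condition~(1) of Definition~\ref{defn: CT} for the inclusion $\iota:Y\to X$; by Lemma~\ref{new CT lemma 1} this suffices to produce the CT map. Given $\xi\in\partial Y$ and $\{y_n\}\subset Y$ with $y_n\to\xi$, I will show $\{y_n\}$ converges in $X$ to a point depending only on $\xi$. Fix a geodesic ray $\gamma$ in $Y$ from $y_0$ to $\xi$ and split on whether $\gamma$ is unbounded or bounded in $\HAT{Y}$.

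In the unbounded case, i.e.\ $\xi\in\partial_h Y$, Theorem~\ref{electric cor} gives $y_n\to\phi_Y(\xi)\in\partial\HAT{Y}$ in $\HAT{Y}$. The Mitra criterion hypothesis supplies the CT map $\partial\HAT{\iota}:\partial\HAT{Y}\to\partial\HAT{X}$, so $y_n\to\partial\HAT{\iota}(\phi_Y(\xi))$ in $\HAT{X}$; equivalently, by Lemma~\ref{convg criteria} combined with Lemma~\ref{kapovich-rafi main cor} in $\HAT{X}$, $d_{\HAT{X}}(y_0,[y_m,y_n]_{\HAT{X}})\to\infty$. Corollary~\ref{electric cor2} promotes this to $d_X(y_0,[y_m,y_n]_X)\to\infty$, so $\{y_n\}$ converges in $X$ to some $\eta\in\partial X$, and Theorem~\ref{electric cor} applied to $X$ identifies $\eta=\phi_X^{-1}\bigl(\partial\HAT{\iota}(\phi_Y(\xi))\bigr)\in\partial_h X$, which depends only on $\xi$.

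In the bounded case $\xi\notin\partial_h Y$, the inclusion $\HAT{Y}\to\HAT{X}$ is coarsely Lipschitz on $Y$, since each cone of $A_j$ in $\HAT{Y}$ can be replaced by the cone of an enclosing $B_i$ in $\HAT{X}$; hence $\gamma$ is also bounded in $\HAT{X}$, while properness of $Y\to X$ forces $\gamma(n)\to\infty$ in $X$. I then adapt the inductive argument of Proposition~\ref{electric main lemma}, applied in $X$ with the locally finite family $\{B_i\}$, to the $\HAT{X}$-geodesics $\alpha_n=[y_0,\gamma(n)]_{\HAT{X}}$, whose lengths are bounded by some $D$. Iterative pigeonhole based on local finiteness of $\{B_i\}$ in $X$ stabilises, on a subsequence, the finite chain $B_{i_1},\ldots,B_{i_m}$ of $B_i$'s visited by $\alpha_n$. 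De-electrifying $\alpha_n$ via Lemma~\ref{electric qc} and combining Lemmas~\ref{kapovich-rafi main cor},~\ref{dotted quasi-geodesic in quasi-convex set} and~\ref{finite qc}, the $X$-geodesic $[y_0,\gamma(n)]_X$ lies in a bounded $X$-neighbourhood of $\{y_0\}\cup B_{i_1}\cup\cdots\cup B_{i_m}\cup\{\gamma(n)\}$; a final pigeonhole, using $\gamma(n)\to\infty$ in $X$, places $\gamma(n)$ within bounded $X$-distance of a single $B_{i_\ast}$. Lemma~\ref{limit set intersection} together with quasiconvexity of $B_{i_\ast}$ in $X$ then forces $\{\gamma(n)\}$ to converge in $X$ to some $\eta\in\Lambda_X(B_{i_\ast})$; Corollary~\ref{new CT lemma} and an interleaving argument against any other sequence $y'_n\to\xi$ show that $\eta$ depends only on $\xi$.

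For the injectivity clause, $\partial_h X$ and $\bigcup_i\Lambda_X(B_i)$ are disjoint by the remark after Theorem~\ref{electric cor}, so images under $\partial\iota$ of $\partial_h Y$ and of $\partial Y\setminus\partial_h Y$ are automatically separated. On $\partial_h Y$, $\partial\iota$ factors as $\phi_X^{-1}\circ\partial\HAT{\iota}\circ\phi_Y$ through two homeomorphisms, so its injectivity there is exactly equivalent to injectivity of $\partial\HAT{\iota}$. On $\partial Y\setminus\partial_h Y$, injectivity is tracked via the induced length metric on each $A_j$: quasiconvexity of $A_j$ in both $Y$ and $X$ implies $A_j$ with its length metric qi-embeds into both spaces, yielding identifications $\Lambda_Y(A_j)\cong\partial A_j\cong\Lambda_X(A_j)\subset\Lambda_X(B_i)$ that transfer the relevant (in)jectivity from $\partial\HAT{\iota}$. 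The hardest step of the whole argument is the bounded case above: adapting Proposition~\ref{electric main lemma}'s induction when $\gamma$ is only a $Y$-geodesic (and possibly not a quasigeodesic of $X$), stabilising the chain of $B_i$'s using local finiteness only of $\{B_i\}$ in $X$ and not of $\{A_j\}$ in $Y$, and extracting a genuine $X$-limit from the bounded $\HAT{X}$-behaviour rather than merely a subsequential cluster.
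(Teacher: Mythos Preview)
Your treatment of the case $\xi\in\partial_h Y$ is essentially the same as the paper's. The gap is in your bounded case, where two issues arise.

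First, you only analyze the particular sequence $\{\gamma(n)\}$, where $\gamma$ is a fixed $Y$-geodesic ray to $\xi$, and then invoke an ``interleaving argument'' to pass to an arbitrary sequence $\{y_n\}$ with $y_n\to\xi$. That step does not work: interleaving $\{\gamma(n)\}$ with $\{y_n\}$ gives a sequence converging to $\xi$ in $Y$, but you have not yet shown that \emph{every} such sequence converges in $X$, so no conclusion about $\{y_n\}$ follows. The serious situation is when $\xi\notin\partial_h Y$ but $\{y_n\}$ is unbounded in $\HAT{Y}$; nothing in your outline uses Mitra's criterion in this case, yet this is precisely where the full force of Mitra's criterion (and not mere existence of the CT map $\partial\HAT{Y}\to\partial\HAT{X}$) is needed. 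The paper's proof handles exactly this in its Subcase~2: it picks, for each large $R$, a point $x^R_n$ on $[x_0,x_n]_Y$ at $\HAT{Y}$-distance $R$ from $x_0$, uses Mitra's criterion to push $d_{\HAT{Y}}(x_0,[x^R_n,x_n]_{\HAT{Y}})\geq R_1$ to $d_{\HAT{X}}(x_0,[x^R_n,x_n]_{\HAT{X}})\geq R_2$, then Corollary~\ref{electric cor2} to get to $X$, and finally a diagonal-sequence argument (Lemma~\ref{diagonal sequence}) to conclude.

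Second, even the claim that $\{\gamma(n)\}$ converges in $X$ is not justified by your argument. You show (along a subsequence) that $\gamma(n)$ lies in a bounded $X$-neighbourhood of a single $B_{i_\ast}$; but $\gamma$ is only a $Y$-geodesic, not an $X$-quasigeodesic, so Lemma~\ref{limit set intersection} does not apply, and proximity to $B_{i_\ast}$ alone does not force convergence in $\partial X$. The paper avoids this difficulty by a different decomposition: it uses $\xi\in\Lambda_Y(A_j)$ for some $j$ (rather than just ``$\gamma$ bounded in $\HAT{Y}$'') and takes a quasigeodesic ray lying \emph{inside} $A_j$, which by Lemma~\ref{quasigeodesic} is a uniform quasigeodesic in \emph{both} $Y$ and $X$. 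This anchor ray is what makes both Subcase~1 (bounded $\{x_n\}$) and Subcase~2 (unbounded $\{x_n\}$) go through. Your outline never exploits the $A_j$'s in this way, and that is the missing idea.
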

\proof Since the inclusion $\HAT{Y}\map \HAT{X}$ satisfies Mitra's criterion
we have a CT map, say $g:\partial \HAT{Y}\map \partial \HAT{X}$. Now we consider the following map $h:\partial Y\map \partial X$:

\begin{equation*}
h(\xi)=
    \begin{cases}
        \xi & \text{if } \xi \in \partial_v Y,\\
        \phi^{-1}_X\circ g\circ\phi_Y(\xi) & \text{if } \xi \in \partial_h Y
    \end{cases}
\end{equation*}
We shall show that $h$ is the CT map $\partial Y\map \partial X$
by verifying the hypotheses of Corollary \ref{new CT lemma}. 
Suppose $\{x_n\}$ is a sequence in $Y$ and $x_n\map \xi\in \partial Y$.  
We need to verify that $x_n\map h(\xi)$ in $\bar{X}$.
The proof is divided into two cases.

{\bf Case 1.} \underline{Suppose $\xi\in \partial_h Y$.}
Then $x_n\map \phi_Y(\xi)$ in $\HAT{Y}$ by Theorem \ref{electric cor}. However the inclusion 
$\HAT{Y}\map \HAT{X}$ admits a CT map by hypothesis. Hence, $x_n\map g\circ \phi_Y(\xi)$ in $\HAT{X}$. 
Hence, again by Theorem \ref{electric cor} we have $x_n\map \phi^{-1}_X\circ g\circ\phi_Y(\xi)=h(\xi)$ in $\bar{X}$.

{\bf Case 2.} \underline{Suppose $\xi\in \Lambda(A_i)$ for some $i$.} 
Let $x\in A_i$. By Lemma \ref{ray in qc} there is a $K_{\ref{ray in qc}}(\delta_0,k_0)$-quasigeodesic ray $\gamma$ of $X$ contained in $A_i$ 
which joins $x$ to $\xi$. Then $\gamma$ is a $C_{\ref{quasigeodesic}}(\rho_0,K_{\ref{ray in qc}}(\delta_0,k_0),K_{\ref{ray in qc}}(\delta_0,k_0))$-quasigeodesic in $Y$ as well by Lemma \ref{quasigeodesic}.
There are two subcases to consider:

{\bf Subcase 1.} \underline{Suppose that $\{x_n\}$ is bounded in $\HAT{Y}$.}
Let  $l=\sup\{d_{\HAT{Y}}(x,x_n):n\in \NN\}$. Let $\alpha_n$ be a geodesic in $\HAT{Y}$ joining $x$ to $x_n$
and let $\gamma_n$ be a de-electrification of $\alpha_n$. By Lemma \ref{electric qc}
 $\gamma_n$ is a $K_{\ref{electric qc}}(\delta_0,k_0,l)$-quasiconvex path in $Y$ as well as in $X$.
Let $\beta_n$ be the concatenation of $\gamma$ and $\gamma_n$ for all $n\in \NN$. Then
$\beta_n$ is a $k=D_{\ref{finite qc}}(\delta_0,k_0,2)$-quasiconvex path in both $X$ and $Y$. We note that $\xi\in \Lambda_X(\beta_n)$.
Hence, we can choose, by Lemma \ref{ray in qc}, a $K_{\ref{ray in qc}}(\delta_0,k)$-quasigeodesic of $X$, say $\beta'_n\subset \beta_n$, 
joining $x_n$ to $\eta$. Then by Lemma \ref{quasigeodesic} it is a $C_{\ref{quasigeodesic}}(\rho_0,K_{\ref{ray in qc}}(\delta_0,k),K_{\ref{ray in qc}}(\delta_0,k))$-quasigeodesic in $Y$ as well.
Since $x_n\map \xi$ in $\bar{Y}$ we have $d_Y(x_0, \beta'_n)\map \infty$ by Lemma \ref{convg criteria}(2).
Since $Y$ is properly embedded in $X$, $d_X(x_0, \beta'_n)\map \infty$. That in turn implies that 
$x_n\map \xi$ in $\bar{X}$ again by Lemma \ref{convg criteria}(2).

{\bf Subcase 2.} \underline{Suppose that $\{x_n\}$ is unbounded in $\HAT{Y}$.} 
Passing to a subsequence, if needed, we may assume that $d_{\HAT{Y}}(x_0, x_n)>n$. 
Now, for all $R\in \NN$ and $n\geq R$, let $x^R_n\in Y$ be the farthest point of $[x_0,x_n]_Y$
such that $d_{\HAT{Y}}(x_0, x^R_n)=R$. We note that the sequence of geodesics $[x_0,x_n]_Y$ fellow 
travel $\gamma$ for longer and longer time as $n\map \infty$ by Lemma \ref{convg criteria 2}. 
This implies that 
$d_Y(x_0,[x_n^R,x_n]_Y)\map\infty$ as $n\map\infty$ for all large $R$ since the inclusion map $Y\map \HAT{Y}$
is Lipschitz. Hence, $x^R_n\map \xi$ in $\bar{Y}$
for large enough $R$. By the Subcase 1, for any such $R$ we have $x^R_n\map \xi$ in $\bar{X}$ too.

By choice of the points $x^R_n$ we see that $d_{\HAT{Y}}(x_0, [x^R_n, x_n]_Y)\geq R$.
Hence, by Corollary \ref{kapovich-rafi main cor} $d_{\HAT{Y}}(x_0, [x^R_n, x_n]_{\HAT{Y}})\geq R_1$
where $|R_1-R|$ is uniformly small.
Since the inclusion $\HAT{Y}\map \HAT{X}$ satisfies Mitra's criterion we have $R_2\geq 0$ depending on $R_1$ such that
$d_{\HAT{X}}(x_0, [x^R_n, x_n]_{\HAT{X}})\geq R_2$. Hence, by Corollary \ref{electric cor2} $d_X(x_0, [x^R_n, x_n]_X)\geq R_3$.
We note that $R_3\map \infty$ as $R\map \infty$. Now since $x^R_n\map \xi$ for all $R$ large enough, 
by Lemma \ref{diagonal sequence}  one may find an unbounded sequence of integers $\{m_R\}$ such that
$x^R_{m_R}\map \xi$.
On the other hand, this means
$d_X(x_0, [x^{R}_{m_R}, x_{m_R}]_X)\map \infty$. It follows that $x_{m_R}\map \xi$ in $\bar{X}$ as $R\map \infty$
by Lemma \ref{convg criteria}. Hence, by invoking Corollary \ref{new CT lemma} we are done. 

The last part of the theorem is clear from the definition of the map $h$. \qed 

\begin{cor}
Suppose we have the hypotheses (1)-(5) of the Theorem \ref{electric ct}. If $\HAT{Y}$ is a proper metric space, the inclusion
$\HAT{Y}\map \HAT{X}$ is a proper embedding and the CT map exists for the inclusion $\HAT{Y}\map \HAT{X}$ then 
the CT map exists for the inclusion $Y\map X$.
\end{cor}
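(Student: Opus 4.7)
The plan is to reduce this corollary directly to Theorem \ref{electric ct} by upgrading the hypothesis ``the CT map exists for $\HAT{Y}\map\HAT{X}$'' to ``the inclusion $\HAT{Y}\map\HAT{X}$ satisfies Mitra's criterion''. Once this upgrade is done, Theorem \ref{electric ct} applies verbatim and delivers the CT map for $Y\map X$.

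The upgrade step is exactly what Remark \ref{CT iff Mitra}(2) provides: when the source is a proper hyperbolic metric space and the map is a proper embedding, Mitra's criterion is equivalent to the existence of the CT map. The two additional hypotheses we are given, namely that $\HAT{Y}$ is proper and that $\HAT{\iota}:\HAT{Y}\map\HAT{X}$ is a proper embedding, are precisely what is needed to invoke Remark \ref{CT iff Mitra}(2). So from the assumed existence of $\partial\HAT{\iota}:\partial\HAT{Y}\map\partial\HAT{X}$ we conclude that $\HAT{\iota}$ satisfies Mitra's criterion.

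With Mitra's criterion in hand for $\HAT{\iota}$, and since hypotheses (1)--(5) of Theorem \ref{electric ct} are in force by assumption, Theorem \ref{electric ct} immediately produces the CT map $\partial Y\map\partial X$ for the inclusion $\iota: Y\map X$. There is essentially no technical obstacle to navigate here, because all the hard work has been done in the proof of Theorem \ref{electric ct}; the corollary merely rephrases the hypothesis in a slightly more user-friendly form. The only thing worth double-checking is that the properness of the embedding $\HAT{Y}\map\HAT{X}$, together with properness of $\HAT{Y}$, really does let one apply Remark \ref{CT iff Mitra}(2) as stated, but this is immediate from the statement of that remark.
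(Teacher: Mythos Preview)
Your proof is correct and follows exactly the same route as the paper: invoke Remark \ref{CT iff Mitra}(2) to upgrade the existence of the CT map for $\HAT{\iota}$ to Mitra's criterion (using that $\HAT{Y}$ is proper and $\HAT{\iota}$ is a proper embedding), and then apply Theorem \ref{electric ct}. The paper's proof is the one-line version of what you wrote.
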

\proof This is immediate from Remark \ref{CT iff Mitra} and Theorem \ref{electric ct}. \qed

\smallskip
Here is an example to show that the mere existence of the CT map for the inclusion $\HAT{Y}\map \HAT{X}$ is not
enough to guarantee the existence of the CT map for $Y\map X$. 
\begin{example}\label{counter example}
Suppose $X$ is obtained from the hyperbolic plane by gluing copies of $[0,\infty)$ at a sequence of points
of the form $P_n=(a_n,b_n)\in \HH^2$, where $a_n=1$ or $-1$ according as $n$ is odd or even. In other words, 
$X=(\HH^2\sqcup \NN \times [0,\infty))/\sim$ where $P_n$ is identified with $(n,0)\in \NN\times [0,\infty)$
and then one takes the natural length metric on this quotient.
Let $T_n$ denote the copy of $[0,\infty)$ glued to $P_n$.
Clearly $X$ is a hyperbolic metric space. Let $Y$ be the subspace of $X$ which is the union
of the part of the $y$-axis in $\HH^2$, the $T_n$'s and the horizontal (Euclidean) line segments joining 
$P_n$ to $(0,b_n)$. Then it is easy to see that for suitable
choices of $b_n$'s, $Y$ is properly embedded in $X$. The CT map for the inclusion $Y\map X$ does not exist because
both the sequences $\{(0,b_n)\}$ and $\{P_n\}$ converge to the same point at infinity for $Y$ but they converge to different
points for $X$. Let $A$ be the part of the $y$-axis in $\HH^2$. Clealrly $A$ is quasiconvex in both $X$ and $Y$.
Let $\HAT{X}, \HAT{Y}$ be the spaces obtained from $X,Y$ respectively by coning off $A$. 
Then it is clear that $\HAT{Y}$ is properly embedded in $\HAT{X}$ and that the CT map exists for the inclusion 
$\HAT{Y}\map \HAT{X}$ but Mitra's criterion fails to hold for $\HAT{Y}\map \HAT{X}$.
\end{example}

\begin{prop}[{\bf Converse to Theorem \ref{electric ct}}]\label{main converse}
Suppose we have the hypothesis (1)-(5) of Theorem \ref{electric ct} and the
 CT map $\partial \iota: \partial Y\map \partial X$ exists. 
Then the CT map $\partial \HAT{\iota}:\partial \HAT{Y}\map \partial \HAT{X}$ exists if and only if for any $B_i$, and
$\xi\in \Lambda_X(B_i)\cap Im(\partial \iota)$ we have $\xi \in \Lambda_X(A_j)$ for some $A_j$.
\end{prop}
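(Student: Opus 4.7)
My plan is to prove each direction of the equivalence separately, relying on the homeomorphism $\phi_X:\partial_h X\to\partial\HAT X$ from Theorem \ref{electric cor} and the decomposition $\partial X=\partial_h X\cup\partial_v X$ from Theorem \ref{elec union thm}. For the forward implication, let $\xi\in\Lambda_X(B_i)\cap\mathrm{Im}(\partial\iota)$ with $\xi=\partial\iota(\zeta)$. Applying Theorem \ref{elec union thm} inside $Y$ (which needs local finiteness of $\{A_j\}$ in $Y$, extractable from local finiteness of $\{B_i\}$ in $X$ together with the proper embedding), $\zeta$ lies in $\partial_h Y$ or $\partial_v Y$. I would rule out the case $\zeta\in\partial_h Y$ as follows: choose $y_n\to\zeta$ in $\bar Y$; Theorem \ref{electric cor} gives $y_n\to\phi_Y(\zeta)$ in $\bar{\HAT Y}$, the assumed CT for $\HAT\iota$ gives $y_n\to\partial\HAT\iota(\phi_Y(\zeta))$ in $\bar{\HAT X}$, and inverting via $\psi_X$ yields $y_n\to\psi_X(\partial\HAT\iota(\phi_Y(\zeta)))\in\partial_h X$ in $\bar X$; but $y_n\to\xi$ in $\bar X$ by the CT for $\iota$, forcing $\xi\in\partial_h X\cap\partial_v X$, which is empty by Lemma \ref{limit set intersection}. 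Therefore $\zeta\in\Lambda_Y(A_k)$ for some $k$, and any sequence $a_n\in A_k$ with $a_n\to\zeta$ in $\bar Y$ satisfies $a_n\to\xi$ in $\bar X$, giving $\xi\in\Lambda_X(A_k)$.

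For the reverse implication, I would define the candidate $g:\partial\HAT Y\to\partial\HAT X$ by $g(\eta):=\phi_X(\partial\iota(\psi_Y(\eta)))$ and verify it is the CT map via Corollary \ref{new CT lemma}. Granting well-definedness, that is, $\partial\iota(\partial_h Y)\subset\partial_h X$, the verification is direct: for any sequence $y_n\to\eta$ in $\bar{\HAT Y}$, which we may assume lies in $Y$, Theorem \ref{electric cor} yields $y_n\to\psi_Y(\eta)$ in $\bar Y$, the CT for $\iota$ yields $y_n\to\partial\iota(\psi_Y(\eta))$ in $\bar X$ (a point of $\partial_h X$ by well-definedness), and Theorem \ref{electric cor} applied to $X$ gives $y_n\to g(\eta)$ in $\bar{\HAT X}$.

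The hard part is proving the well-definedness $\partial\iota(\partial_h Y)\subset\partial_h X$ under the hypothesis. Suppose for contradiction that $\psi\in\partial_h Y$ with $\xi:=\partial\iota(\psi)\in\partial_v X$; the hypothesis gives $\xi\in\Lambda_X(A_j)$ for some $j$, and Lemmas \ref{ray in qc} and \ref{quasigeodesic} supply a quasigeodesic ray $\alpha\subset A_j$ which is a quasigeodesic in both $X$ and $Y$, with $\alpha(\infty)_Y\in\Lambda_Y(A_j)\subset\partial_v Y$ and $\partial\iota(\alpha(\infty)_Y)=\xi$. To derive the contradiction I would pick $y_n\to\psi$ in $\bar Y$, apply Lemma \ref{convg criteria 2} in $X$ to the convergence $y_n\to\xi$ to locate points $p_n$ on $[y_0,y_n]_X$ at $X$-distance approximately $n$ from $y_0$ that are uniformly close to $A_j$ in $X$, replace them by $b_n\in A_j$, and use the proper embedding to transfer unboundedness of $\{b_n\}$ into $Y$; the remaining delicate step is to argue that $b_n$ and $y_n$ are asymptotic in $\bar Y$ (not merely in $\bar X$), which would force $\psi=\lim b_n\in\Lambda_Y(A_j)\subset\partial_v Y$, contradicting $\psi\in\partial_h Y$. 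This final passage from $X$-asymptoticity to $Y$-asymptoticity is the subtlest point, and I expect it to exploit the quasiconvexity of $A_j$ in both spaces together with the hypothesized CT map for $\iota$, via Lemma \ref{convg criteria}.
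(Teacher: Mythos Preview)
Your forward direction is essentially the paper's argument, just spelled out in more detail: both establish $\partial\iota(\partial_h Y)\subset\partial_h X$ from the existence of $\partial\HAT\iota$ via the commutative square with $\psi_Y,\psi_X$, and then read off the conclusion using $\partial\iota(\partial_v Y)\subset\partial_v X$ and the decomposition $\partial Y=\partial_h Y\cup\partial_v Y$.

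For the converse, the paper merely writes ``the converse is also similar and hence we skip the proof,'' so there is nothing substantive to compare against. Your attempt correctly isolates what is needed, namely $\partial\iota(\partial_h Y)\subset\partial_h X$, and correctly reduces the existence of $\partial\HAT\iota$ to this via Corollary~\ref{new CT lemma} and Theorem~\ref{electric cor}. The difficulty you flag at the end, however, is a genuine gap rather than a technical loose end. Knowing $y_n\sim b_n$ in $\bar X$ (both limiting to $\xi$) gives no control on $y_n\sim b_n$ in $\bar Y$: the CT map $\partial\iota$ transports convergence only from $Y$ to $X$, never the reverse, and what you are asking for is precisely injectivity of $\partial\iota$ at $\xi$. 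None of the tools you list---quasiconvexity of $A_j$ in both spaces, Lemma~\ref{convg criteria}, or the existence of $\partial\iota$---can bridge this, because the situation ``$\psi\in\partial_h Y$, $\zeta\in\Lambda_Y(A_j)$, $\partial\iota(\psi)=\partial\iota(\zeta)$'' is exactly the obstruction and is not ruled out by any hypothesis in force. In that scenario the stated limit-set condition holds (since $\xi=\partial\iota(\zeta)\in\Lambda_X(A_j)$), yet any $y_n\to\psi$ in $\bar Y$ converges in $\bar X$ to $\xi\in\partial_v X$ and hence, by Theorem~\ref{electric cor}(2), cannot converge to any point of $\partial\HAT X$; so $\partial\HAT\iota$ would fail to exist. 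In short, the converse as stated appears to hinge on an implicit injectivity that neither you nor the paper supplies, and your proposed final step cannot be completed with the indicated lemmas.
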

\proof Suppose the CT map $\partial \HAT{\iota}:\partial \HAT{Y}\map \partial \HAT{X}$ exists. Clearly
we have the commutative diagram (Figure 1) below where the maps $\psi_Y, \psi_X$ are defined as in the proof of 
Theorem \ref{electric cor} whose images are $\partial_h Y$ and $\partial_h X$ respectively.
\begin{figure}[h]
	\[ \begin{tikzcd}[column sep=4.2em,row sep=3.2em]
		\partial\HAT{Y} \arrow{r}{\partial \HAT{\iota}} \arrow[swap]{d}{\psi_Y} & \partial\HAT{X} \arrow{d}{\psi_X} \\%
		\partial Y\arrow{r}{\partial \iota} & \partial X
	\end{tikzcd}
	\]
	\caption{}
	\label{pic 3}
\end{figure}

Suppose
$\eta\in \partial \HAT{Y}$ 
and $\{y_n\}$ is a sequence in $Y$ converging to  $\eta$ in $\HAT{Y}$.
Then $y_n\map \partial \HAT{\iota}(\eta) \in\HAT{X}$ since the CT map 
$\partial \HAT{\iota}:\partial \HAT{Y}\map \partial \HAT{X}$ exists. 
That in turn implies, again by Theorem \ref{electric cor}(2), that 
$y_n\map \psi_X(\partial \HAT{\iota}(\eta))\in \partial_h X$. 
Thus using the commutativity of the diagram, we have $\partial \iota(\partial_h Y)\subset \partial_h X$.

On the other hand, clearly $\partial \iota$ restricted to $\partial_v Y$ is injective and its image
is contained in $\partial_v X$.
$\partial Y\setminus \partial \HAT{Y}$ is clearly injective. Thus for any $B_i$, and $\xi\in \Lambda_X(B_i)\cap Im(\partial \iota)$,
implies $\xi\in \partial i(\partial_v Y)$, i.e. $\xi\in \Lambda_X(A_j)$ for some $A_j$.
 The converse is also similar and hence we skip the proof. \qed

\section{Consequences and examples}\label{section 4}
In this section we discuss several applications of the results of the previous section. We start with the following.

\subsection{Application to (relatively) hyperbolic groups}
Theorem \ref{electric ct} has the following immediate group theoretic consequences. 
Before stating the results we note that
in both the theorems mentioned below we have the following set-up. 

\begin{enumerate}
\item We have a finitely generated group $G$ and a finitely generated subgroup $H$ of $G$. We choose a finite generating set $S_G$ of $G$ which contains finite generating set $S_H$ of $H$ so that Cayley graph of $H$ with respect to $S_H$ naturally contained in the Cayley graph of $G$ with respect to $S_G$. For notational convenience, we denote these Cayley graphs of $H$ and $G$ by $H$ and $G$ respectively.

\item Moreover, there are subgroups $K_1, K_2,\cdots, K_n$ in $G$ and $K'_1,K'_2,\cdots, K'_m$ in $H$
such that the following hold:

(1*) {\em For all $1\leq i\leq m$ there is $g_i\in G$ and $1\leq r_i\leq n$ such that $K'_i= H\cap g_iK_{r_i}g^{-1}_i$.}

(2*)  {\em For all $g\in G$ and $1 \leq i\leq n$, there is $1\leq l\leq m$ and $h\in H$ such that 
$H\cap gK_i g^{-1}= hK'_lh^{-1}$.}

We note that for all $i, 1\leq i\leq m$ and all $h\in H$ we have $hK'_i\subset hg_iK_{r_i}g^{-1}_i$ by (1*). Thus if
$D=\max\{d(1,g_i): 1\leq i\leq m\}$ then $hK'_i\subset N_D(hg_i K_{r_i})$ for all $1\leq i\leq m$ and
$h\in H$ where the neighborhood is taken in a Cayley graph of $G$. 

\item {\em Let $\HAT{H}$ be the coned-off space obtained from $H$ by coning the various cosets of $K'_i$'s 
and $\HAT{G}$ be the coned-off space obtained from $G$ by coning off the cosets of the various $K_i$'s.} 

Let $\HAT{\HAT{G}}$ be the coned-off spaces obtained from $G$ by coning off the $D$-neighborhoods of the cosets of 
$K_i$'s in $G$. Then there are natural inclusion maps $\phi_1:\HAT{H}\map \HAT{\HAT{G}}$ and $\phi_2:\HAT{G}\map \HAT{\HAT{G}}$ respectively where the latter is a quasiisometry by Lemma \ref{basic cone-off 3}. 

\item {\em Let $\iota:H\map G$ denote the inclusion map and let  $\HAT{\iota}:\HAT{H}\map \HAT{G}$ be the natural map
such that $\phi_2\circ \HAT{\iota}=\phi_1$. }

We note that since $\phi_2$ is a quasiisometry, $\phi_1$ satisfies Mitra's criterion or admits the CT map- 
in case the coned-off spaces are hyperbolic, if and only if the same is true about $\HAT{\iota}$.
 
\end{enumerate}

\begin{theorem}\label{application 1}
Suppose $H< G$ are hyperbolic groups, $K_1, K_2, \cdots, K_n$ are quasiconvex subgroups of $G$
and $K'_1, K'_2,\ldots, K'_m$ are subgroups of $H$ which are all quasiconvex in $G$ such that (1*) and (2*) hold.
Then assuming the notation in (3) and (4) we have the following:

(1) If the map $\HAT{\iota}: \HAT{H}\map \HAT{G}$ satisfies Mitra's criterion then there is a CT map 
$\partial \iota: \partial H\map \partial G$. Moreover, if the CT map $\partial \HAT{\iota}$ is injective then $H$ is quasiconvex in $G$.

(2) Conversely, suppose the CT map for the inclusion $\iota: H\map G$ exists. Then the CT map for 
$\HAT{\iota}:\HAT{H}\map \HAT{G}$ exists if and only if for any coset $gK_i, 1\leq i\leq n$ and 
$\xi\in \Lambda_G(gK_i)$ either $\xi\not\in \partial i(\partial H)$
or there is $h\in H, 1\leq j\leq m$ such that $\xi\in \Lambda_G(h K'_j)$.
\end{theorem}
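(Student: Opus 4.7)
The plan is to deduce both statements of Theorem \ref{application 1} by setting things up so that Theorem \ref{electric ct} and Proposition \ref{main converse} apply directly. Take $Y$ to be the Cayley graph of $H$ with respect to $S_H$ and $X$ the Cayley graph of $G$ with respect to $S_G$; since $S_H \subset S_G$, $Y$ is naturally a subgraph of $X$ and the induced length metric agrees with the word metric of $H$. Both are hyperbolic by hypothesis, and the inclusion $\iota:Y\map X$ is Lipschitz and a proper embedding because finite-radius balls in a Cayley graph contain only finitely many group elements. Hence hypotheses (1)--(3) of Theorem \ref{electric ct} hold.

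Next I would specify the two families. Let $D = \max\{|g_i| : 1 \leq i \leq m\}$ and take $\{B_i\}$ to be the family $\{N_D(gK_j) : g \in G,\ 1 \leq j \leq n\}$ and $\{A_j\}$ to be $\{hK'_l : h \in H,\ 1 \leq l \leq m\}$. Each $B_i$ is uniformly quasiconvex in $X$ because each $K_j$ is quasiconvex and bounded neighborhoods of quasiconvex sets in a hyperbolic space are uniformly quasiconvex; each $A_j$ is uniformly quasiconvex in $X$ by hypothesis and in $Y$ since the $K'_l$ are quasiconvex subgroups of $H$ (this is what makes $\HAT{H}$ hyperbolic in the first place). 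Local finiteness of $\{B_i\}$ is immediate: for fixed $j$ the cosets of $K_j$ are pairwise disjoint and any bounded subset of $X$ meets only finitely many group elements. Finally, (1*) gives $hK'_l \subset N_D(hg_l K_{r_l})$, so each $A_j$ is contained in some $B_i \cap Y$. All hypotheses (1)--(5) of Theorem \ref{electric ct} are verified.

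With these choices, $\HAT{Y}$ in the theorem becomes $\HAT{H}$ and $\HAT{X}$ becomes $\HAT{\HAT{G}}$. By Lemma \ref{basic cone-off 3}, $\phi_2: \HAT{G} \map \HAT{\HAT{G}}$ is a quasi-isometry, so Mitra's criterion (respectively the existence and injectivity of CT maps) for $\HAT{\iota}:\HAT{H} \map \HAT{G}$ is equivalent to the same property for the composition $\phi_2 \circ \HAT{\iota} = \phi_1 :\HAT{H} \map \HAT{\HAT{G}}$. Statement (1) is then immediate from Theorem \ref{electric ct}: Mitra's criterion for $\HAT{\iota}$ yields the CT map $\partial \iota$, and injectivity of $\partial \HAT{\iota}$ forces injectivity of $\partial \iota$, whence $H$ is quasiconvex in $G$ by Lemma \ref{inj of ct}. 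Statement (2) follows at once from Proposition \ref{main converse}, once one notes that $\Lambda_G(N_D(gK_j)) = \Lambda_G(gK_j)$ since limit sets are insensitive to bounded neighborhoods.

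The only genuine subtlety is the bookkeeping in the coning step. Because (1*) only places $K'_l$ inside a conjugate $g_l K_{r_l} g_l^{-1}$ rather than inside a coset of some $K_j$, one cannot cone off the bare cosets and expect each $A_j$ to sit literally inside a $B_i$; inflating by the bounded neighborhood $N_D$ and invoking Lemma \ref{basic cone-off 3} to pass between $\HAT{G}$ and $\HAT{\HAT{G}}$ via the quasi-isometry $\phi_2$ is the mechanism that makes the translation clean. This is the one place where care is needed; everything else is a routine verification of the hypotheses of Theorem \ref{electric ct} and a transcription of Proposition \ref{main converse}.
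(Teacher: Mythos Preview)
Your proof is correct and follows exactly the paper's approach: verify the hypotheses of Theorem~\ref{electric ct} and Proposition~\ref{main converse} by taking $X,Y$ to be the Cayley graphs, using the $N_D$-inflation and Lemma~\ref{basic cone-off 3} to mediate between $\HAT{G}$ and $\HAT{\HAT{G}}$, and then invoking Lemma~\ref{inj of ct} for the quasiconvexity conclusion. The one point you assert but do not justify is that the $K'_l$ are quasiconvex in $H$ (the hypothesis only gives quasiconvexity in $G$, and your parenthetical is circular); the paper handles this by citing \cite[Lemma 2.2]{ilyakapovichcomb}, which says that a subgroup of a hyperbolic group $H$ that happens to be quasiconvex in an ambient hyperbolic $G\supset H$ is automatically quasiconvex in $H$.
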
 
\proof The first part of (1) is immediate from Theorem \ref{electric ct}. One notes that $K'_i$'s are quasiconvex
in $H$ as well by \cite[Lemma 2.2]{ilyakapovichcomb}. Also when the CT map $\partial H\map \partial G$ exists, 
it is injective if and only if so is the CT map
$\partial \HAT{H}\map \partial \HAT{G}$. Hence, the second part of (1) follows by Lemma \ref{inj of ct}.
(2) is immediate from Proposition \ref{main converse}. \qed

For background on relatively hyperbolic groups, one is referred to some standard references like \cite{bowditch-relhyp},\cite{farb-relhyp}
\cite{groves-manning}, and \cite{hru-rel}. One is referred to \cite[Definition 6.2]{hru-rel} for the definition of
a relatively quasiconvex subgroup of a relatively hyperbolic group. Now as another application of Theorem \ref{electric ct} 
one has the following.

\begin{theorem}\label{application 2}
Suppose $H<G$ are finitely generated groups and $K_i<G$, $1\leq i\leq n$ and $K'_j<H$, $1\leq j\leq m$
such that that (1*), (2*) hold. Moreover suppose that $G$ is hyperbolic relative to $K_i$'s and $H$
is hyperbolic relative to $K'_j$'s. Then with the notation in (3) and (4) we have the following:

If $\HAT{\iota}:\HAT{H}\map \HAT{G}$ satisfies Mitra's criterion then in level of the Bowditch 
boundaries of the groups the CT map $\partial_{rel}H\map \partial_{rel}G$ exists. Moreover, if the CT map is injective then 
$H$ is relatively quasiconvex in $G$.
\end{theorem}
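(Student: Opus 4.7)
The plan is to reduce Theorem \ref{application 2} to Theorem \ref{electric ct} by passing to Groves--Manning cusped spaces. Let $X$ denote the cusped space associated to $(G,\{K_1,\ldots,K_n\})$ and $Y$ that associated to $(H,\{K'_1,\ldots,K'_m\})$. Since the groups are relatively hyperbolic in the sense of Bowditch/Groves--Manning, both $X$ and $Y$ are Gromov hyperbolic, and their Gromov boundaries are, by definition, the Bowditch boundaries $\partial_{rel}G$ and $\partial_{rel}H$. The collection $\{B_i\}$ of combinatorial horoballs in $X$ over cosets $gK_i$ is uniformly quasiconvex and locally finite; similarly for the collection $\{A_j\}$ of horoballs in $Y$ over cosets $hK'_j$. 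The action of $H$ on $Y$ by isometries extends equivariantly to an $H$-equivariant proper embedding $Y\hookrightarrow X$; this uses (1*) to map each horoball over $hK'_j$ into a uniformly bounded neighborhood of the horoball over $hg_jK_{r_j}$, and (2*) to ensure these are the only horoballs of $X$ meeting $Y$ in an unbounded set. Thus, after replacing each $A_j$ by its intersection (or nearest-point companion) with the appropriate $B_i\cap Y$ if necessary, hypotheses (1)--(5) in the set-up preceding Theorem \ref{electric ct} are all satisfied.

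Next, I would verify that the coned-off cusped space $\HAT{X}$ (obtained by coning off the horoballs $B_i$) is quasi-isometric to the coned-off Cayley graph $\HAT{G}$, and similarly $\HAT{Y}\simeq \HAT{H}$. This is because each horoball in $X$ is at bounded Hausdorff distance from the corresponding coset $gK_i$ (once a basepoint is chosen on each horoball), so Lemma \ref{basic cone-off 3} yields the desired quasi-isometries. Under these quasi-isometries the hypothesis that $\HAT{\iota}:\HAT{H}\to \HAT{G}$ satisfies Mitra's criterion transports directly to the statement that the inclusion $\HAT{Y}\to \HAT{X}$ satisfies Mitra's criterion, since Mitra's criterion is a quasi-isometry invariant property of proper embeddings.

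Applying Theorem \ref{electric ct} to this set-up gives a Cannon--Thurston map $\partial Y\to \partial X$; identifying both boundaries with Bowditch boundaries, this is precisely the desired CT map $\partial_{rel}H\to \partial_{rel}G$.

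For the final assertion, suppose the CT map $\partial_{rel}H\to \partial_{rel}G$ is injective. Relative quasiconvexity of $H$ in $G$ (in the sense of Hruska) is equivalent to the $H$-orbit in the cusped space $X$ being a quasiconvex subset. Injectivity of the boundary map, combined with the properness of the $H$-action on $X$ (inherited from the action of $G$), gives this via the relatively hyperbolic analogue of Lemma \ref{inj of ct}: an injective CT map between the Gromov boundaries of the cusped spaces forces the inclusion $Y\hookrightarrow X$ to be a quasi-isometric embedding on orbits, equivalently relative quasiconvexity. The main obstacle, and the only step requiring some care, is this translation between the coned-off Cayley graph formulation used in the hypothesis and the cusped-space formulation used to access Theorem \ref{electric ct}; once Lemma \ref{basic cone-off 3} is invoked to reconcile them, the remainder of the argument is a formal consequence of Theorem \ref{electric ct} and the standard dictionary between Bowditch boundaries and boundaries of cusped spaces.
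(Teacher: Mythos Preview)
Your overall strategy for the first half---passing to cusped spaces and applying Theorem \ref{electric ct}---is exactly what the paper does. One technical slip: you justify the quasi-isometry $\HAT{X}\simeq \HAT{G}$ by saying each horoball is at bounded Hausdorff distance from its coset and invoking Lemma \ref{basic cone-off 3}. That is false; horoballs have infinite depth, so the Hausdorff distance is infinite and Lemma \ref{basic cone-off 3} does not apply as stated. The conclusion $\HAT{X}\simeq \HAT{G}$ is nonetheless correct and standard (every point of $X$ lies in some horoball, hence is within distance $2$ of the corresponding cone point in $\HAT{X}$, so the inclusion $G\hookrightarrow X$ induces a coarsely surjective map $\HAT{G}\to\HAT{X}$ which one checks is a quasi-isometry); the paper simply asserts this fact. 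So this is a repairable misstatement rather than a real gap.

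The genuine gap is in your treatment of the second assertion. You invoke a ``relatively hyperbolic analogue of Lemma \ref{inj of ct}'' to the effect that an injective CT map between cusped-space boundaries forces a quasi-isometric embedding on orbits. No such lemma is available in the paper, and Lemma \ref{inj of ct} itself is a statement about word-hyperbolic groups whose proof does not transfer verbatim. The paper instead argues as follows: since $\partial_{rel}H$ is compact and the CT map is a continuous injection into the Hausdorff space $\partial_{rel}G$, it is an $H$-equivariant homeomorphism onto its image; the $H$-action on $\partial_{rel}H$ is geometrically finite (\cite[Theorem 5.4]{hru-rel}) and minimal, hence so is the action on the image; by uniqueness of the minimal closed $H$-invariant subset of $\partial_{rel}G$, the image coincides with the limit set $\Lambda(H)$; thus $H$ acts geometrically finitely on its limit set, which is precisely Hruska's definition of relative quasiconvexity. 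You should replace your appeal to an unproved analogue with this concrete argument.
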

{\bf Discussion.} Before we present a proof of the theorem let us first try to make sense of
the statement of the theorem.
\begin{enumerate}
\item By attaching hyperbolic cusps to a Cayley graph of $G$ along the various cosets
of $K_i$'s we get a hyperbolic metric space, say $X'$; e.g. see \cite[Definition 3.12]{groves-manning}. 
The {\em Bowditch boundary} (\cite{bowditch-relhyp}) of $G$ (with respect $\{K_i\}$) is defined 
to be $\partial_s X'$. It is a standard fact that $\partial_s X'$ independent on the choice of
Cayley graph of $G$. We shall denote it by $\partial_{rel}G$. Since $X'$ is a proper hyperbolic metric space,
$\partial_{rel}G$ is a compact metrizable space.
 
\item Let $D\geq 0$ be such that $K'_i\subset N_D(g_iK_{r_i})\cap H$ for all $1\leq i\leq m$.
After attaching hyperbolic cusps along the $D$-neighborhoods of all the cosets of the subgroups 
$\{K_i\}$ in a Cayley graph of $G$ we get a space, say $X$. Now, $X$ is also hyperbolic since 
$X$ is quasiisometric to $X'$ (see \cite[Theorem 1.2]{hruska-healy}).

\item Let $Y$ be the space obtained by attaching hyperbolic cusps to a Cayley graph of $H$ along 
the cosets of the subgroups in $\{K'_i\}$. The resulting space, say $Y$, is also hyperbolic.
Note that $\partial_{rel}H=\partial_s Y$ and $Y\subset X$. 

\item When the CT map $\partial Y\map \partial X$ exists then we say that the inclusion $H\map G$ 
admits the CT map in the level of the Bowditch boundaries. Lastly it is clear that $H$ naturally 
acts on $Y$ isometrically and hence it induces an action on $\partial_{rel}Y$. For a proof see 
\cite[Section 3]{groves-manning}. This action is geometrically finite by \cite[Theorem 5.4]{hru-rel}.
\end{enumerate}

{\em Proof of Theorem \ref{application 2}:} Suppose $\HAT{X}$ and $\HAT{Y}$ are the spaces
obtained from $X,Y$ respectively by coning off all the hyperbolic cusps in $X$ and $Y$ respectively. 
We note that $\HAT{G}$ is naturally quasiisometric to $\HAT{X}$. 
Thus the inclusion map $\HAT{Y}\map \HAT{X}$ satisfies Mitra's criterion. It is a standard fact 
that the hyperbolic cusps are uniformly quasiconvex in the respective cusped spaces 
(see \cite[Lemma 9.2(1)]{kapovich-sardar}).
Hence, all the hypotheses of Theorem \ref{electric ct} checks out. It follows that
CT map exists for the inclusion $Y\map X$.

For the second part we note that $\partial_{rel}H$ is equivariantly homeomorphic to the image of the CT map,
since the Bowditch boundaries are compact, Hausdorff spaces as mentioned in the discussion above. 
Hence, $H$-action on the image of the CT map is geometrically finite. We note that the $H$-action on
$\partial_{rel}H$ is minimal (see \cite[p. 4]{bowditch-relhyp}) and hence the $H$-action on the image of the CT 
map is also minimal. Since the limit set of $H$ in $\partial_{rel}G$ is the unique $H$-invariant closed 
set (see e.g. \cite{coornaert}) on which the $H$-action is minimal, the limit set of $H$ in $\partial_{rel}G$ 
is precisely the image of the CT map. Thus the action of $H$ on its limit set in $\partial_{rel}G$ is 
geometrically finite, whence $H$ is relatively quasiconvex (see \cite[Definition 6.2]{hru-rel}). \qed

\smallskip
We note that a statement similar to the second part of Theorem \ref{application 1} can be formulated
in this case too. But it will require more facts about relatively hyperbolic groups to be recalled and
hence we skip it.

\subsection{Applications to complexes of groups}\label{4}
Complexes of groups are natural generalizations of graphs of groups (see \cite{serre-trees}).
Gerstein and Stallings in \cite{gerstein-stalling} gave the first instances of complexes of groups. 
Subsequently, a more general theory of complexes of groups was studied independently by Corson \cite{corson1} 
and Haefliger \cite{haefliger}. Here, we briefly recall basic facts about some rather special types of complexes 
of groups needed for our purpose. For more details, one is referred to \cite{bridson-haefliger},\cite{haefliger}.

For the rest of this subsection, we shall denote by $\YY$ a finite connected simplicial complex. Let $\BB(\YY)$
denote the directed graph whose vertex set is the set of simplices of $\YY$ and given two simplices $\tau\subset \sigma$
we have a directed edge $e$ from $\sigma$ to $\tau$. In this case we write $e=(\sigma, \tau)$, $o(e)=\sigma$ and $t(e)=\tau$.
Two directed edges $e,e'$ are said to be {\em composable} if $t(e)=o(e')$. In that case the composition
is denoted by $e*e'$.

\begin{defn}[{\bf Complex of groups}] 
A complex of groups $(\mathcal{G},\YY)=(G_{\sigma},\psi_a,g_{a,b})$ over $Y$ consists of the following data:
	\begin{enumerate}
		\item For each $\sigma\in V(\BB(\YY))$, there is a group $G_{\sigma}$- called the local group at $\sigma$.
		
		\item For each edge $e\in E(\BB(\YY))$, there is an injective homomorphism $\psi_e:G_{i(e)}\rightarrow G_{t(e)}$.
These homomorphisms are referred to as the local maps.
		
		\item For each pair of composable edges $e,e'\in E(\BB(\YY))$, a twisting element $g_{e,e'}\in G_{t(e)}$ with the following properties:
		
		(i) $Ad(g_{e,e'})\psi_{e*e'}=\psi_e\psi_{e'}$ where $Ad(g_{e,e'})$ denotes the conjugation by $g_{e,e'}$,
		
		(ii)(Cocycle condition) $\psi_e(g_{e',e''})g_{e,e'*e''} = g_{e,e'}g_{e*e',e''}$ for each triple 
$e,e',e''$ of composable edges of $E(\BB(\YY))$. 
	\end{enumerate} 
\end{defn}
The cocycle condition is empty if dimension of $\YY$ is $2$. If $\YY$ is $1$-dimensional then the complex of groups over $\YY$ 
is the same as the graph of groups over $\YY$. However, given a complex of groups one can define
a complex of space.

\begin{defn} [\bf{Complexes of spaces}]\cite[Definition 1.3]{martin1}
A complex of spaces $C(\YY)$ over $\YY$ consists of the following data:
	\begin{enumerate}
		\item For every simplex $\sigma$ of $\YY$, a CW-complex $C_{\sigma}$.
		\item For every pair of simplices $\sigma\subset \sigma'$, an embedding $\phi_{\sigma,\sigma'}:C_{\sigma'}\rightarrow C_{\sigma}$ called a gluing map such that for every $\sigma\subset\sigma'\subset\sigma''$, we have $\phi_{\sigma,\sigma''}=\phi_{\sigma,\sigma'}\phi_{\sigma',\sigma''}$.
	\end{enumerate}
\end{defn}

The {\em topological realization} $|C(\YY)|$ of the complex of spaces $C(\YY)$ is the following quotient space:
\begin{center}
	$|C(\YY)|=(\bigsqcup\limits_{\sigma\subset \YY}\sigma\times C_{\sigma})/_\sim$
\end{center}
Here all $\sigma$'s are given the subspace topology from the Euclidean spaces and 
$(i_{\sigma,\sigma'}(x),s)\\
\sim(x,\phi_{\sigma,\sigma'(s)})$ for $x\in \sigma\subset\sigma'$, 
$s\in C_{\sigma'}$ and $i_{\sigma,\sigma'}:\sigma\hookrightarrow\sigma'$ is the  inclusion.

Given a complex of groups $(\GG, \YY)$ over $\YY$, for each simplex $\sigma$ of $\YY$ one takes any simplicial complex (generally
a $K(G_{\sigma},1)$-space), say $\YY_{\sigma}$, with a base point such that (1) $\pi_1(\YY_{\sigma})\simeq G_{\sigma}$ and (2)
for every pair of simplices $\tau\subset \sigma$ one has a base point preserving continuous map $\YY_{\sigma}\map \YY_{\tau}$ 
which induces the homomorphism $\psi_{(\sigma, \tau)}$ at the level of fundamental groups. 
This defines a complex of spaces over $\YY$. By abuse of terminology we also call the realization, say
$\YB$, of this to be a complex of spaces as well. Note that we have a natural simplicial map $\YB\map \YY$.

The {\bf fundamental group} $\pi_1(\GG,\YY)$ of $(\GG, \YY)$ is defined to be $\pi_1(\YB)$. 
It is a standard consequence of van Kampen theorem that this is independent of the complex of spaces $C(\YY)$ thus chosen.
For a proof of this one is refered to \cite[Remarks, p. 88]{corson}. When the homomorphisms $\pi_1(\YY_{\sigma})\map \pi_1(\YB)$
induced by the inclusion map $\YY_{\sigma}\map \YB$ are injective for all simplices $\sigma$ of $\YY$ then we say 
that the complex of groups $(\GG, \YY)$ is {\bf developable}. (One can also define developable complexes of group 
as in \cite{haefliger-cplx},\cite{bridson-haefliger}. However, these definitions are equivalent by the results 
of \cite[Section 2]{corson}.)

{\bf Note:} {\em For the rest of the paper we shall always assume that all our complexes of groups are
developable. We shall denote $\pi_1(\GG, \YY)$ by $G$ for the rest of this section.}

\medskip
{\bf Development or universal cover of a developable complex of groups}\\
Suppose $\tilde{\YB}\map \YB$ is a universal cover of $\YB$. Then as in \cite{corson} one considers the composition
$\tilde{\YB}\map \YB\map \YY$, say $f$, and collapses the connected components of $f^{-1}(y)$ for all $y\in \YY$.
The resulting simplicial complex, say $B$, is called the {\em development} or the {\em universal cover} of 
$(\GG, \YY)$.  Note that $\tilde{\YB}$ has an induced simplicial complex structure from $\YB$ so that the map 
$\tilde{\YB}\map \YB$ is simplicial. Then it follows that $B$ is also a simplicial complex and the natural map 
$B\map \YB$ is simplicial. Moreover, one may show that the development is independent of the complex of spaces chosen for the complex
of groups; see \cite[Section 2]{corson} for a proof. It is in fact the following (see \cite{martin-univ}) simplicial complex:

	\begin{center}
		$B:=(G\times (\bigsqcup\limits_{\sigma\subset Y}\sigma))/\sim$
	\end{center}
	where $(gg',x)\sim (g,x)$ for $g\in G,g'\in G_{\sigma}$, $x\in \sigma$ and 
$(g,i_{\sigma',\sigma}(x))\sim (ge,x)$ for $g\in G,x\in \sigma$ where $i_{\sigma',\sigma}:\sigma'\hookrightarrow \sigma$ is the inclusion map and $e=(\sigma,\sigma')$.

From either of the descriptions of the development it follows that $G$ has a natural simplicial action on $B$ with quotient $\YB$.

{\bf Notation.} In the remaining part of the paper, $X$ will denote $\tilde{\YB}$ and  $p:X\map B$ will denote the
natural quotient map. 

\begin{rem}\label{remk cplx}
(1) However, we do not work with the CW topology on $B$. We rather view $B$ as the quotient metric space (see \cite[Chapters I.5, I.7]{bridson-haefliger}
obtained by gluing standard Euclidean simplices. Then clearly the $G$-action on $B$ is through isometries.

(2) When $\mathbb Y$ is a finite CW or simplicial complex then we can put a length metric on $\mathbb Y$ 
\cite[Chaper I.7]{bridson-haefliger} which then naturally gives rise to a length metric on $X$ 
\cite[Definition 3.24, Chapter I.3]{bridson-haefliger} and $X$ becomes a geodesic metric space
and the $G$-action on $B$ is (properly discontinuous, cocompact and) through isometries. Also, in this case the map
$p:X\map B$ is $1$-Lipschitz and $G$-equivariant.

(3) In case all the face groups are hyperbolic, one can choose a complex of spaces $C(\YY)$ where the spaces for each
face is a finite CW or simplicial complex since hyperbolic groups are finitely presented. Thus $|C(\YY)|$ a finite
CW (or simplicial) complex.
\end{rem}

The following proposition follows from \cite[Theorem 5.1]{charney-crisp}. For an alternative treatment one may look
up \cite[Section 3]{mbdl2}. 
\begin{prop}\label{development is cone off}
Let $\YY$ be a finite simplicial complex and let $(\GG,\YY)$ be a developable complex of groups. Let $p: X\map B$ be
as in (2) of the Remark \ref{remk cplx}.
Suppose $x\in X$. Consider the orbit map $\mathcal O: G\map X$ given by $g\mapsto g.x$.
There is a constant $D\geq 0$ such that the following holds: \\Let $g\in G$, $\sigma\subset \YY$ be a face and 
$y\in \sigma$ is the barycenter of $\sigma$.

Then (1) $Hd(\mathcal O(gG_{\sigma}), p^{-1}([g,y]))\leq D$ where $[g,y]$ denotes the equivalence
class of $(g,y)$ in $B$ as defined above.

(2) The spaces obtained from $X$ by coning off the following two collections of subsets are naturally quasiisometric:

\noindent
(i) Inverse images of all points in $B$ under $p$.\\
(ii) Inverse images under $p$ of only the barycenters of the various simplices of $B$.

(3) The map $p\circ \mathcal O$ induces a $G$-equivariant quasiisometry $\HAT{G}\map B$ where $\HAT{G}$ is the coned off
Cayley graph of $G$ obtained by coning the various cosets of the face groups of $(\GG, \YY)$.
\end{prop}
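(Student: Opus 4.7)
\smallskip
\textbf{Proof plan.} My plan is to combine the explicit presentation $B=(G\times\bigsqcup_{\sigma\subset\YY}\sigma)/\sim$ with the Milnor-{\u S}varc lemma and the coned-off space comparison Lemma \ref{basic cone-off 3}. Throughout I fix a compact fundamental domain for the $G$-action on $X$; this is legitimate because $X/G=\YB$ is a finite CW-complex by Remark \ref{remk cplx}(3). I also write $f:X\to\YY$ for the composite map appearing in the definition of $B$.

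For part (1), by $G$-equivariance of $p$ and $\mathcal{O}$ it suffices to treat the case $g=1$. The preimage $p^{-1}([1,y])$ is, by construction, a single connected component of $f^{-1}(y)$ in $X$; since $(\GG,\YY)$ is developable, this component is isomorphic to the universal cover of $\YY_\sigma$, on which $G_\sigma$ acts cocompactly. Lifting a path in $\YY$ of length at most $\diam(\YY)$ from $f(x)$ to $y$ produces a point $x_\sigma\in p^{-1}([1,y])$ with $d_X(x,x_\sigma)$ bounded by a constant depending only on the finite complex $\YB$. Cocompactness of the $G_\sigma$-action on $p^{-1}([1,y])$ then forces $Hd_X(G_\sigma\cdot x,\, p^{-1}([1,y]))\leq D_\sigma$ for some $D_\sigma$, and taking the maximum over the finitely many simplices of $\YY$ yields a uniform $D$.

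For part (2), every $b\in B$ lies at $B$-distance at most $\diam(\YY)$ from the barycenter $b_0$ of the unique simplex of $B$ containing it. Combining this with the description of $p^{-1}(b)$ and $p^{-1}(b_0)$ as connected components of fibers of $f$ that can be joined by bounded-length lifts in $X$, together with cocompactness of the local group actions, I obtain a uniform bound $Hd_X(p^{-1}(b),p^{-1}(b_0))\leq D'$ independent of $b$. Lemma \ref{basic cone-off 3} then delivers the desired quasi-isometry between the two coned-off spaces.

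For part (3), the Milnor-{\u S}varc lemma applied to the proper, cocompact, isometric $G$-action on $X$ shows that $\mathcal{O}:G\to X$ is a quasi-isometry. Under this QI, cosets $gG_\sigma$ map to sets uniformly Hausdorff close to the fibers $p^{-1}([g,y_\sigma])$ by part (1), so by Lemma \ref{basic cone-off 3}, $\mathcal{O}$ extends to a quasi-isometry from $\HAT{G}$ to the space $X_1$ obtained from $X$ by coning off the fibers of $p$ over barycenters. By part (2), $X_1$ is quasi-isometric to the space $X_2$ obtained from $X$ by coning off \emph{every} fiber of $p$. Finally $p$ induces a surjective, coarsely Lipschitz map $X_2\to B$ whose fibers have diameter at most $2$ (each fiber of $p$ becomes a bounded cone in $X_2$); a short direct argument shows this induced map is itself a quasi-isometry. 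Each constituent map is $G$-equivariant, so the composition is too. The main obstacle is ensuring uniform control of all constants across the infinite family of simplices of $B$; this is handled by finiteness of $\YY$ together with the cocompactness of the $G$-action on $X$.
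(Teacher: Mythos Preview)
The paper does not actually prove this proposition: it only records that it ``follows from \cite[Theorem 5.1]{charney-crisp}'' with an alternative reference to \cite[Section 3]{mbdl2}. Your proposal therefore supplies a self-contained argument where the paper defers to the literature, and the overall strategy---Milnor--\v{S}varc for $\mathcal O$, cocompactness of $G_\sigma$ on the fibres for (1), and coned-off comparisons for (2) and (3)---is the right one.

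Two points need tightening. First, Lemma \ref{basic cone-off 3} is stated for two families $\{A_i\},\{B_i\}$ indexed by the \emph{same} set and living in the \emph{same} ambient space. In your part (2) the two families (all fibres versus barycentre fibres) are indexed by different sets, and in part (3) you are transporting cones along the quasi-isometry $\mathcal O:G\to X$ between different spaces. Both extensions are routine---if $A$ lies within Hausdorff distance $D$ of a set that is already coned then additionally coning $A$ changes nothing up to quasi-isometry, and a quasi-isometry carrying each $A_i$ to within $D$ of $B_i$ extends to a quasi-isometry of the coned-off spaces---but neither is literally Lemma \ref{basic cone-off 3}, so you should state the variant you are invoking.

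Second, the assertion that ``$p$ induces a quasi-isometry $X_2\to B$'' is the substantive step---it is essentially what the cited Charney--Crisp theorem provides---and ``a short direct argument'' undersells it. You must check that short paths in $B$ lift, up to bounded error in $X_2$, to paths of comparable length: lift simplex by simplex (each simplex $[g,\sigma]$ of $B$ is the $p$-image of a piece of $X$ on which $p$ is projection to $\sigma$), and use the cone points of $X_2$ to bridge the mismatches of the lifts at faces. Finiteness of $\YY$ bounds the number of simplices a unit path in $B$ can meet, hence the accumulated error. Writing this step out is where the proof actually happens; the rest is bookkeeping.
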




Following is the set up for the main theorem of this section.

\begin{itemize}
\item (H1) Suppose $(\mathcal{G},\YY)$ is a developable complex of groups 
with the development $B$.
\item (H2) Suppose $G$ is hyperbolic and all the face groups are quasiconvex in $G$. 
\item (H3) Suppose $\YY_1$ is a connected subcomplex of $\YY$ and $(\mathcal{G},\YY_1)$ is the subcomplex of groups 
obtained by restricting $(\mathcal{G},\YY)$ to $\YY_1$. Then by \cite[Corollary 2.15, Chapter III.C]{bridson-haefliger}
$(\mathcal{G},\YY_1)$ is a developable complex of
groups. Let $G_1=\pi_1(\GG, \YY_1)$ and let $B_1$ be the development. 
\item (H4) Suppose the natural homomorphism $G_1\map G$ is injective. Then we have a natural map $B_1\map B$.
\item (H5) Suppose $G_1$ is also hyperbolic. 
\end{itemize}
Then we have the following.

\begin{theorem}\label{acyl ct thm}
(1) The spaces $B, B_1$ are hyperbolic.

(2) If the natural map $B_1\rightarrow B$ satisfies Mitra's criterion
then there exists a Cannon-Thurston map for the inclusion $G_1\rightarrow G$. 
Moreover, $G_1$ is quasiconvex in $G$ if and only if the Cannon-Thurston map for 
$B_1\rightarrow B$ is injective.
\end{theorem}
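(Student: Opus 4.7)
The plan is to translate between the complex-of-groups picture and the Cayley graph picture via Proposition \ref{development is cone off}(3), which provides $G$-equivariant quasiisometries $\HAT{G} \sim B$ and $\HAT{G_1} \sim B_1$, where $\HAT{G}$ and $\HAT{G_1}$ are the Cayley graphs of $G$ and $G_1$ coned off along the cosets of the face groups of $(\GG,\YY)$ and $(\GG,\YY_1)$ respectively. Let $X, Y$ denote the Cayley graphs of $G, G_1$ with respect to compatible finite generating sets (so $Y \subset X$ with the induced length metric), let $\{B_i\}$ be the family of cosets in $G$ of the face groups of $(\GG,\YY)$, and $\{A_j\}$ the family of cosets in $G_1$ of the face groups of $(\GG,\YY_1)$. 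The inclusion $Y \hookrightarrow X$ is a proper embedding since Cayley graphs of finitely generated groups are locally finite.

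For part (1), by (H2) the $\{B_i\}$ are uniformly quasiconvex in $X$, so Proposition \ref{mj-dahmani} gives that $\HAT{X}$ is hyperbolic, whence so is $B$. For $B_1$ one proceeds similarly once one has verified that the face groups of $(\GG,\YY_1)$ are quasiconvex in $G_1$; this is the key technical step, discussed below. For part (2), we apply Theorem \ref{electric ct}. The remaining hypotheses are routine to verify: $\{B_i\}$ is locally finite (standard for cosets of finitely many subgroups in a Cayley graph), $\{A_j\}$ is uniformly quasiconvex in $X$ by (H2) and in $Y$ by part (1), and each $A_j$ is contained in some $B_i \cap Y$ since each face group of $(\GG,\YY_1)$ is a face group of $(\GG,\YY)$. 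The Mitra's criterion hypothesis for $B_1 \to B$ transfers to Mitra's criterion for $\HAT{Y} \to \HAT{X}$ via the quasiisometries $\HAT{X} \sim B$ and $\HAT{Y} \sim B_1$ (using Lemma \ref{basic cone-off 3} where needed). Theorem \ref{electric ct} then yields the CT map for $Y \to X$, that is, for $G_1 \to G$, together with the equivalence that this CT map is injective if and only if the CT map $\partial \HAT{Y} \to \partial \HAT{X}$, equivalently $\partial B_1 \to \partial B$, is injective. Combined with Lemma \ref{inj of ct}, this gives the quasiconvexity characterization.

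The main obstacle is proving quasiconvexity of the face groups of $(\GG,\YY_1)$ in $G_1$: although these subgroups are quasiconvex in the ambient group $G$ by (H2), quasiconvexity does not descend to subgroups in general, so a bespoke argument---likely exploiting the $G_1$-action on $B_1$ or induction on $\dim \YY_1$---will be required. Everything else is either standard or follows directly from the main machinery: the preservation of Mitra's criterion under quasiisometries of hyperbolic spaces is routine via stability of quasigeodesics, and the verification of the other hypotheses of Theorem \ref{electric ct} reduces to well-known facts about Cayley graphs and cosets.
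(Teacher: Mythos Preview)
Your approach is essentially the same as the paper's: reduce to the coned-off Cayley graph picture via Proposition~\ref{development is cone off}(3), obtaining a commutative square with $\HAT{G_1}\to\HAT{G}$ on top and $B_1\to B$ on the bottom (vertical maps quasiisometries), and then apply the main technical machinery. The paper packages the second step as Theorem~\ref{application 1}(1) rather than invoking Theorem~\ref{electric ct} and Lemma~\ref{inj of ct} directly, but the content is identical.

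The only point worth flagging is that you overestimate your ``main obstacle''---quasiconvexity of the face groups of $(\GG,\YY_1)$ in $G_1$. No bespoke argument or induction on $\dim\YY_1$ is needed. The paper dispatches this (inside the proof of Theorem~\ref{application 1}) by citing \cite[Lemma~2.2]{ilyakapovichcomb}, but it also follows from tools already in the paper: the inclusion $Y\hookrightarrow X$ of Cayley graphs is a proper embedding (local finiteness); each face group $K'$ of $(\GG,\YY_1)$ is a face group of $(\GG,\YY)$ and hence quasiconvex in $X$ by (H2); by Lemma~\ref{dotted quasi-geodesic in quasi-convex set} any two points of $K'$ are joined by a uniform $X$-quasigeodesic lying in $K'\subset Y$; by Lemma~\ref{quasigeodesic}(2) this path is a uniform $Y$-quasigeodesic; stability of quasigeodesics in the hyperbolic space $Y$ (hypothesis (H5)) then gives uniform quasiconvexity of $K'$ in $G_1$. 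With this in hand, both parts (1) and (2) go through exactly as you outline.
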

\proof Let $C(\YY)$ be a complex of spaces for the complex of groups $(\mathcal{G},\YY)$
where the space for each simplex is a finite simplicial complex. Let $C(\YY_1)$
be the restriction of $C(\YY)$ to $\YY_1$. Let $\mathbb Y$ be the topological realization
of $C(\YY)$ and let $\mathbb Y_1$ be the topological realization of $C(\YY_1)$.
Then we have the following commutative diagram (figure \ref{pic 1}) where the horizontal
maps are inclusions.

\begin{figure}[h]
	\[ \begin{tikzcd}[column sep=4.2em,row sep=3.2em]
		\mathbb{Y}_1 \arrow{r} \arrow[swap]{d} & \mathbb{Y} \arrow{d} \\%
		\YY_1\arrow{r} & \YY
	\end{tikzcd}
	\]
	\caption{}
	\label{pic 1}
\end{figure}


Let $f:X\map \mathbb Y$ and $f_1:X_1\map \mathbb Y_1$ be universal covers.
Then we have a commutative diagram as below where the top horizontal map
can be assumed to be an inclusion map since $\pi_1(\YB_1)\map \pi_1(\YB)$ is injective.

\begin{figure}[h]
	\[ \begin{tikzcd}[column sep=4.2em,row sep=3.2em]
		X_1 \arrow{r} \arrow[swap]{d}{f_1} & X \arrow{d}{f} \\%
		\mathbb{Y}_1\arrow{r} & \mathbb{Y}
	\end{tikzcd}
	\]
	\caption{Horizontal maps are inclusion}
	\label{pic 2}
\end{figure}

Since the top horizontal map is $G_1$-equivariant by choosing $x\in X_1$ and using 
Proposition \ref{development is cone off} we have a commutative diagram
as below. 

\begin{figure}[h]
	\[ \begin{tikzcd}[column sep=4.2em,row sep=3.2em]
		\HAT{G_1} \arrow{r} \arrow[swap]{d} & \HAT{G} \arrow{d} \\%
		B_1\arrow{r} & B
	\end{tikzcd}
	\]
	\caption{Vertical maps are quasiisometries}
	\label{pic 3}
\end{figure}


Here the vertical maps are quasiisometries. Hence, (1) follows  
from Proposition \ref{mj-dahmani} and also $B_1\map B$ admits the CT map or satisfies Mitra's
criterion if and only if the same is true for the map $\HAT{G}_1\map \HAT{G}$. Moreover, any of these two maps is injective if and only if
so is the other one. Hence, the first part of (2) in the theorem follows from the first part of Theorem \ref{application 1}(1).
It also follows that in this case the CT map $\partial G_1\map \partial G$ is injective if and only if
so is the CT map $\partial B_1\map \partial B$. Thus the second part of (2) follows from the second part of
Theorem \ref{application 1}(1). \qed

\smallskip
Below we mention two instances of complexes of groups where the fundamental groups are
hyperbolic and all the face groups are quasiconvex.

\subsubsection{Complexes of groups with finite edge groups}
In this section we consider developable complexes of groups whose edge groups are finite and whose developments are hyperbolic
which may or may not be CAT(0).
The following proposition then is immediate from the work of \cite{bowditch-relhyp}, and \cite{pedroja-fine-graph}. We include the statement
in this paper because we could not find it in the literature.

\begin{theorem}\label{gen fin edge}
Suppose $(\GG,\YY)$ is a developable complex of groups such that the edge groups are finite and the universal cover of $(\GG,\YY)$ is a hyperbolic
metric space. Then the fundamental group of $(\GG,\YY)$, say $G$, is hyperbolic relative to the vertex groups $\{G_v: v\in V(\YY)\}$.
\end{theorem}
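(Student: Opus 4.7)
The plan is to apply Bowditch's dynamical characterization of relative hyperbolicity \cite{bowditch-relhyp} (see also \cite{pedroja-fine-graph}): a group $G$ is hyperbolic relative to a family $\mathcal{H}$ if and only if $G$ admits a cocompact simplicial action on a connected fine hyperbolic graph $K$ with finite edge stabilizers and finitely many orbits of edges, such that the infinite vertex stabilizers are exactly the conjugates of the members of $\mathcal{H}$. With $\mathcal{H}=\{G_v:v\in V(\YY)\}$, my goal is to construct such a $K$ out of the development $B$.

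The natural choice is $K=B^{(1)}$, the $1$-skeleton of $B$ equipped with the combinatorial path metric. Since $\YY$ is finite and $B$ is a finite-dimensional simplicial complex with cocompact $G$-action, the inclusion $K\hookrightarrow B$ is a $G$-equivariant quasi-isometry, so $K$ is hyperbolic because $B$ is hyperbolic by hypothesis. Using the explicit description $B=(G\times\bigsqcup_{\sigma}\sigma)/\sim$ recalled earlier in the excerpt, I would check that the $G$-stabilizer of a vertex $[g,v]$ of $K$ (with $v\in V(\YY)$) is exactly the conjugate $gG_vg^{-1}$; that the $G$-stabilizer of the $1$-simplex corresponding to $[g,e]$ is a conjugate of the edge group $G_e$, which is finite by hypothesis; and that the $G$-action on edges of $K$ admits only finitely many orbits since $\YY$ has only finitely many $1$-simplices.

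The main step, and the expected obstacle, is to verify that $K$ is fine, i.e.\ that every edge of $K$ lies in only finitely many simple circuits of each prescribed length $n$. The cleanest route is via Martínez-Pedroza's fine-graph criterion \cite{pedroja-fine-graph}, which is tailored for cocompact actions on hyperbolic complexes with finite edge stabilizers. Combinatorially, any simple circuit in $K$ of length at most $n$ through a fixed edge $e_0$ projects to a closed edge-walk of length at most $n$ in the finite $1$-complex $\YY^{(1)}$ through the image of $e_0$, and there are only finitely many such projected walks. For each such projection, the lifts to $K$ that start along $e_0$ are parameterized step by step by coset representatives in the vertex groups modulo the (finite) edge groups; the simultaneous requirements that the lift be simple and close up to a circuit of exactly the prescribed length, together with cocompactness of the action and hyperbolicity of $B$, cut these down to only finitely many possibilities. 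Once $K$ is shown to be a connected fine hyperbolic graph with the prescribed stabilizer structure, Bowditch's criterion immediately yields that $G$ is hyperbolic relative to $\{G_v:v\in V(\YY)\}$, as desired.
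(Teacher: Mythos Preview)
Your proposal is correct and follows essentially the same route as the paper: apply Bowditch's fine-graph characterization to the $1$-skeleton of the development, with fineness supplied by Mart\'inez-Pedroza's results in \cite{pedroja-fine-graph}. One small caution: your direct combinatorial sketch of fineness (lifts parameterized by cosets, then cut down by simplicity, hyperbolicity, and cocompactness) is not a proof on its own---vertex groups are infinite, so the finiteness genuinely requires the input from \cite{pedroja-fine-graph} that you already cite as the cleanest route.
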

{\em A word about the proof:} Bowditch (\cite{bowditch-relhyp}) showed that a finitely generated group is hyperbolic relative 
to a finite set of finitely generated infinite subgroups $\{H_i\}$ if and only if there is a {\em fine} hyperbolic graph $X$ on which 
$G$ has a cofinite action and such that the edge stabilizers
are finite and the infinite vertex stabilizers are precisely the conjugates of $\{H_i\}$ in $G$. Using this criterion one 
only needs to check that the $1$-skeleton of the development of $(\GG, \YY)$ is a fine graph. 
This follows from \cite[Corollary 2.11, Theorem 1.3]{pedroja-fine-graph}. 

Now, in the above theorem if one assumes in addition that the vertex groups are hyperbolic then one has the following:

\begin{theorem}\label{combhyp finite edge}
Suppose $(\GG,\YY)$ is a developable complex of groups such that the vertex groups are hyperbolic and the edge groups are finite. 
Suppose the universal cover of $(\GG,\YY)$ is  hyperbolic. Then the fundamental group of $(\GG,\YY)$ is a hyperbolic group
and vertex groups are quasiconvex in $G$.
\end{theorem}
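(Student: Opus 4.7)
The plan is to feed Theorem \ref{gen fin edge} into the well-known fact that a relatively hyperbolic group with hyperbolic peripheral subgroups is itself hyperbolic, with the peripheral subgroups quasiconvex in the usual sense.

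Under the hypotheses of the theorem, Theorem \ref{gen fin edge} applies (the edge groups are finite and the development $B$ is hyperbolic), so $G = \pi_1(\GG,\YY)$ is hyperbolic relative to the family of vertex groups $\{G_v\}_{v \in V(\YY)}$. Since each vertex group $G_v$ is itself word hyperbolic by assumption, we are in the classical situation of a relatively hyperbolic group with hyperbolic peripheral subgroups. In this setting it is a standard theorem (due to Farb, Bowditch, Osin, among others) that the ambient group $G$ is itself word hyperbolic, and moreover that each peripheral subgroup $G_v$ is quasiconvex in $G$. One concrete way to see this, and the route I would spell out, is via the Groves--Manning cusped space \cite{groves-manning}: one builds a hyperbolic cusped space $X'$ by attaching combinatorial hyperbolic horoballs to a Cayley graph $\Gamma$ of $G$ along every coset of every $G_v$, observes that each horoball can be replaced by a bounded-geometry hyperbolic model built out of the hyperbolic group $G_v$, and checks that the resulting space is still hyperbolic and $G$-equivariantly quasi-isometric to $\Gamma$; hence $\Gamma$ is hyperbolic, and each coset $gG_v$ is visibly quasiconvex in this hyperbolic space.

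The main technical step is the passage from relative hyperbolicity (equivalent to hyperbolicity of the coned-off Cayley graph, which is quasi-isometric to $B$ by Proposition \ref{development is cone off}(3)) to absolute hyperbolicity of $G$. It is precisely here that hyperbolicity of the vertex groups is essential: without this hypothesis the coned-off space can be hyperbolic while $G$ is not, as exemplified by any non-hyperbolic relatively hyperbolic group. Once hyperbolicity of $G$ is established, quasiconvexity of each $G_v$ follows either from the cusped-space argument above, or from the general principle that peripheral subgroups of a relatively hyperbolic group are relatively quasiconvex, and that relative quasiconvexity coincides with ordinary quasiconvexity when both the ambient group and the peripheral subgroup are hyperbolic (Hruska, \cite{hru-rel}).
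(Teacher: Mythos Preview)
Your proposal is correct and follows essentially the same route as the paper: apply Theorem \ref{gen fin edge} to obtain relative hyperbolicity with respect to the vertex groups, then invoke the standard fact that a relatively hyperbolic group with hyperbolic peripheral subgroups is itself hyperbolic with quasiconvex peripherals. The paper simply cites \cite[Corollary 2.41]{osin-book} or \cite[Theorem 2.4]{hamnestadtrelative} for this second step, whereas you spell out the Groves--Manning/Hruska perspective, but the argument is the same.
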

In fact, one may use Theorem \ref{gen fin edge} along with \cite[Corollary 2.41]{osin-book} or \cite[Theorem 2.4]{hamnestadtrelative}
for the proof of Theorem \ref{combhyp finite edge}.


Here is the set up for the main result of this subsection.
\begin{itemize}
\item Suppose $(\GG,\YY)$ is a complex of groups as in Theorem \ref{gen fin edge} with $\pi_1(\GG,\YY)=G$
and with universal cover $B$. 
\item Suppose $\YY_1\subset \YY$ is a connected subcomplex and $(\GG,\YY_1)$ is the restriction of $(\GG,\YY)$ to $\YY_1$ such that at the level 
of fundamental groups we have an injection. 
Let $H=\pi_1(\GG,\YY_1)$ and let $B_1$ be the universal cover of $(\GG,\YY_1)$. 
\item Assume that $B_1$ is hyperbolic.
We note that this implies $H$ is hyperbolic relative to the vertex groups in $(\GG,\YY_1)$ by
Theorem \ref{gen fin edge}.

\end{itemize}
Then, by Theorem \ref{application 2} we have the following:

\begin{theorem}
There is a Cannon-Thurston map $\partial_{rel} H\map \partial_{rel} G$ at the level of the Bowditch boundaries of $H$ and
$G$ if Mitra's criterion holds for the inclusion $B_1\map B$.

Moreover, $H$ is a relatively quasiconvex subgroup of $G$ if and only if the CT map for $B_1\map B$ is injective.
\end{theorem}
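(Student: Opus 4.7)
The plan is to deduce this theorem as a direct corollary of Theorem \ref{application 2} by verifying its hypotheses in the present setting. The key conceptual translation is that, by Theorem \ref{gen fin edge}, the peripheral structure of $G$ is given by the collection $\{K_i\}$ of (infinite) vertex groups of $(\GG, \YY)$, and by the same theorem applied to $(\GG, \YY_1)$ together with the assumption that $B_1$ is hyperbolic, the peripheral structure of $H$ is given by the (infinite) vertex groups $\{K'_j\}$ of $(\GG, \YY_1)$. With these identifications the hyperbolic cusped spaces constructed from $G$, $H$ are quasiisometric (via Proposition \ref{development is cone off} combined with \cite[Theorem 1.2]{hruska-healy}) to spaces obtained from $B, B_1$ by cusping along the stabilizer orbits. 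Hence Mitra's criterion for the inclusion $B_1 \to B$ (which by Proposition \ref{development is cone off} is essentially the same as the map $\HAT{H}\to \HAT{G}$) transfers to the inclusion $Y \to X$ of cusped spaces, and injectivity of the respective CT maps is preserved under this translation.

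The heart of the argument, therefore, is to verify conditions (1*) and (2*) for the pairs $(\{K_i\},\{K'_j\})$. For (1*): each $K'_j$ is, up to conjugation in $H$, the $H$-stabilizer of a vertex $v_j$ of $B_1$; pushing $v_j$ into $B$ via the $H$-equivariant embedding $B_1 \hookrightarrow B$, its $G$-orbit contains a vertex fixed by some $K_{r_j}$, yielding $g_j \in G$ with $K'_j = H \cap g_j K_{r_j} g_j^{-1}$. For (2*): given $g \in G$ and a vertex group $K_i = \mathrm{Stab}_G(w_i)$, the intersection $H \cap g K_i g^{-1}$ is exactly $\mathrm{Stab}_H(g\cdot w_i)$; if this intersection is infinite, then since $H$ acts properly and cocompactly on $B_1$ with finite edge stabilizers, the vertex $g\cdot w_i$ must lie in the $H$-orbit of some vertex of $B_1$, whence the stabilizer is $H$-conjugate to some $K'_j$. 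If the intersection is finite, it is absorbed into the generating set of $H$ and does not affect relative hyperbolicity or the statement of Theorem \ref{application 2}.

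Having verified (1*) and (2*), Theorem \ref{application 2} supplies the CT map $\partial_{rel} H \to \partial_{rel} G$ once Mitra's criterion is established for $\HAT{\iota}:\HAT{H}\to \HAT{G}$, and gives that $H$ is relatively quasiconvex in $G$ if and only if this CT map is injective. To close the argument I would then observe, via the $G$-equivariant quasiisometry $\HAT{G} \to B$ (and similarly for $H$, $B_1$) from Proposition \ref{development is cone off}, that Mitra's criterion for $B_1 \to B$ is equivalent to Mitra's criterion for $\HAT{H} \to \HAT{G}$, and injectivity of the two CT maps is equivalent. The conclusion then follows.

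The main obstacle I expect is the analysis of $H \cap g K_i g^{-1}$ in step (2*): one must show that an infinite intersection forces $g \cdot w_i$ to belong to the $H$-orbit of a vertex of $B_1$. The essential input here is the finiteness of the edge stabilizers of $B$ together with the description of $B_1$ as a $H$-invariant subcomplex of $B$ given in the construction preceding Proposition \ref{development is cone off}; a careful combinatorial argument using Bowditch's fine graph criterion (cf.\ \cite[Corollary 2.11]{pedroja-fine-graph}) should suffice to rule out the case of infinite vertex stabilizers that are not visible in $B_1$.
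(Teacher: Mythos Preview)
Your approach is exactly the paper's: the paper simply says ``by Theorem \ref{application 2}'' and states the result, leaving the verification of (1*), (2*) and the translation of Mitra's criterion from $B_1\to B$ to $\HAT{H}\to\HAT{G}$ (via Proposition \ref{development is cone off}) as routine. Your write-up supplies those details.

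Two remarks on your verification of (2*). First, a small slip: $H$ does \emph{not} act properly on $B_1$ (the vertex stabilizers are the infinite groups $G_{v'}$), so that phrase should be dropped. Second, the step ``infinite $H\cap gK_ig^{-1}$ forces $g\cdot w_i$ into the image of $B_1$'' is indeed the only nontrivial point, and your sketch via fineness is more involved than necessary. A cleaner route, available here because Mitra's criterion is already part of the hypothesis, is: Mitra's criterion implies $B_1\to B$ is a proper embedding; hence an element of $H$ acting loxodromically on $B_1$ also acts loxodromically on $B$. Now if $L=H\cap gG_vg^{-1}$ is infinite, every element of $L$ is elliptic on $B$ (it fixes $[g,v]$), so $L$ contains no loxodromic for the $H$-action on $B_1$. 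By the standard dichotomy for subgroups of relatively hyperbolic groups, $L$ is then contained in some $hG_{v'}h^{-1}$ with $v'\in V(\YY_1)$. Almost malnormality of the peripherals in $G$ (finite edge groups) forces $hG_{v'}h^{-1}=gG_vg^{-1}$, and since $G_{v'}\subset H$ this gives $L=hG_{v'}h^{-1}$, as required. The finite-intersection case is, as you note, irrelevant for the peripheral structure.
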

Similarly, by Theorem \ref{application 1} we immediately have the following:
\begin{theorem}
Suppose $(\GG,\YY)$ satisfies the hypotheses of Theorem \ref{combhyp finite edge}.  Then
	there is a Cannon-Thurston map $\partial H\map \partial G$ at the level of the Gromov boundaries of $H$ and
	$G$ if Mitra's criterion holds for the inclusion $B_1\map B$.
	
	Moreover, $H$ is quasiconvex in $G$ if and only if the CT map for $B_1\map B$ is injective.
\end{theorem}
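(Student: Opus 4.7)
The plan is to verify that the current setup satisfies all five hypotheses (H1)--(H5) preceding Theorem \ref{acyl ct thm} and then invoke that theorem directly. The key point is that strengthening Theorem \ref{gen fin edge} by requiring vertex groups to be hyperbolic (which is exactly the hypothesis of Theorem \ref{combhyp finite edge}) upgrades relative hyperbolicity to genuine hyperbolicity, together with quasiconvexity of all face groups, for both $(\GG,\YY)$ and $(\GG,\YY_1)$.

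First I would apply Theorem \ref{combhyp finite edge} to $(\GG,\YY)$ itself to conclude that $G$ is hyperbolic and each vertex group is quasiconvex in $G$. Since the edge groups are finite by hypothesis, the local map $\psi_e:G_{o(e)}\to G_{t(e)}$ associated to an edge $e=(\sigma,\tau)$ in $\BB(\YY)$ realizes $G_\sigma$ as an injection into $G_\tau$; so every face group of a simplex of dimension $\geq 1$ embeds into some (finite) edge group and is therefore itself finite. Finite subgroups of a hyperbolic group are trivially quasiconvex, so together with the vertex case \emph{all} face groups of $(\GG,\YY)$ are quasiconvex in $G$. This takes care of (H1) and (H2). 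Conditions (H3) and (H4) are part of the running assumptions in the set up preceding the theorem.

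For (H5), I would apply Theorem \ref{combhyp finite edge} a second time, now to the subcomplex of groups $(\GG,\YY_1)$: its vertex groups are inherited from $(\GG,\YY)$ and are hyperbolic, its edge groups are inherited and hence finite, and its universal cover $B_1$ is hyperbolic by our standing assumption. Hence $H=\pi_1(\GG,\YY_1)$ is hyperbolic, which is exactly (H5). With (H1)--(H5) verified, Theorem \ref{acyl ct thm}(2) applies verbatim: the hypothesis that $B_1\to B$ satisfies Mitra's criterion produces the CT map $\partial H\to \partial G$, and quasiconvexity of $H$ in $G$ is equivalent to injectivity of the CT map for $B_1\to B$, giving both assertions of the theorem.

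There is no genuine obstacle in this argument; it is a bookkeeping reduction to Theorem \ref{acyl ct thm}. The only small subtlety worth flagging is the quasiconvexity of higher-dimensional face groups in (H2), which is not part of the conclusion of Theorem \ref{combhyp finite edge} as stated but follows for free from the finiteness observation above.
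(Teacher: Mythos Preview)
Your proposal is correct and amounts to the same reduction the paper makes. The paper's one-line proof cites Theorem \ref{application 1} directly, whereas you cite Theorem \ref{acyl ct thm}; since the proof of Theorem \ref{acyl ct thm} is itself a reduction to Theorem \ref{application 1} via Proposition \ref{development is cone off}, the two routes coincide. Your observation that the higher-dimensional face groups are finite (hence trivially quasiconvex) is exactly the small check needed to feed into (H2), and your second application of Theorem \ref{combhyp finite edge} to $(\GG,\YY_1)$ correctly uses the standing assumption that $B_1$ is hyperbolic.
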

\begin{comment}
\proof \RED{The proof needs more preparation: define rel qc, rel cayley graph, equiv of defns of rel qc}
The first part follows immediately from Theorem \ref{application 2} and Theorem \ref{combhyp finite edge}.
Suppose $H$ is a relatively quasiconvex subgroup of $G$. Then by \cite[Theorem 9.1]{hru-rel}, 
$H$ is a relatively hyperbolic group 
with respect to infinite vertex groups of $(\GG,Y_1)$. Again, by \cite[Theorem 10.1]{hru-rel}, the natural inclusion $H\rightarrow G$ 
induces a qi embedding between relative Cayley graphs of $H$ and $G$, respectively. But we know that relative Cayley graph is quasiisometric to coned-off Cayley graph (see \cite{osin-book}). Also, coned-off Cayley graphs of $G,H$ with respect to peripheral structure are quasiisometric to $B,B_1$, respectively. Hence, there exists a qi embedding from $B_1$ to $B$.
	
	Conversely, suppose that the inclusion $B_1^{(1)}\rightarrow B^{(1)}$ is a qi embedding. Above, we saw that the $1$-skeleton of $B$ is a fine hyperbolic graph. Now, $B^{(1)}$ is a qi embedded subgraph of $1$-skeleton of $B$ and also it is $H-$invariant. Hence by \cite[Definition 1.3]{pedraoja-wise}, we see that $H$ is a relatively quasiconvex subgroup of $G$. 
\qed
\end{comment}

\subsubsection{Acylindrical complexes of groups}

In \cite{martin1} (along with \cite[Corollary, p. 805]{martin-univ}) A. Martin proved the following theorem for complexes of 
hyperbolic groups. 

\smallskip
{\bf Theorem} \textup{(\cite[p. 34]{martin1}\label{martin})} {\em Let $(\mathcal{G},\YY)$ be a developable complex of groups such that
the following holds:
\begin{itemize}
\item (M1) $\YY$ is a finite connected simplicial complex,
\item (M2) all the face groups are hyperbolic and local maps are quasiisometric embeddings,
\item (M3) the universal cover of $(\mathcal{G},\YY)$ is a CAT(0) hyperbolic space, and
\item (M4) the action of $\pi_1(\mathcal{G},\YY)$- the fundamental group of $(\mathcal{G},\YY)$, on the
development is acylindrical.
\end{itemize} 
Then $\pi_1(\mathcal{G},\YY)$ is a hyperbolic group and the local groups are quasiconvex in $\pi_1(\mathcal{G},\YY)$.}

\smallskip
In what follows, the above theorem is referred to as Martin's theorem. Using this theorem we deduce the following corollary to Theorem \ref{acyl ct thm}.

\begin{cor}\label{martin application}
Let $(\mathcal{G},\YY)$ be a complex of groups satisfying the conditions (M1)-(M4) of the above theorem. 
Let $\YY_1$ be a connected subcomplex of $\YY$ and 
let $(\mathcal{G},\YY_1)$ be the subcomplex of groups obtained by restricting $(\mathcal{G},\YY)$ to $\YY_1$. Suppose 
the following conditions hold:

\begin{enumerate}
\item $(\mathcal{G},\YY_1)$ also satisfies (M1)-(M4),
\item the natural homomorphism $H=\pi_1(\mathcal{G},\YY_1) \map G=\pi_1(\mathcal{G},\YY)$ is injective,
\item  the natural map  $B_1\rightarrow B$ satisfies Mitra's criterion where $B_1,B$ are the universal covers
of $(\mathcal{G},\YY_1)$ and $(\mathcal{G},\YY)$ respectively. 
\end{enumerate}

Then there exists a Cannon-Thurston map for the inclusion $H\rightarrow G$. 
Moreover, $H$ is quasiconvex in $G$ if and only if the Cannon-Thurston map for $B_1\rightarrow B$ is injective.
\end{cor}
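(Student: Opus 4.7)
The plan is to apply Theorem \ref{acyl ct thm} essentially as a black box; the whole task is to check that Martin's theorem supplies the missing ingredients, namely hyperbolicity of $G$ and of $H$, together with quasiconvexity of all face groups of $(\mathcal{G},\YY)$ in $G$. Once those are in hand, the hypotheses (H1)--(H5) preceding Theorem \ref{acyl ct thm} all hold, and the conclusion follows at once.

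First I would note that (H1) is part of our setup and (H3)--(H4) are given as hypotheses (1) and (2) of the corollary, while (3) is exactly the Mitra's criterion condition needed in the last part of Theorem \ref{acyl ct thm}. The only real content is to verify (H2) and (H5). Applying Martin's theorem to $(\mathcal{G},\YY)$ under (M1)--(M4) yields immediately that $G=\pi_1(\mathcal{G},\YY)$ is hyperbolic and that every local group of $(\mathcal{G},\YY)$ is quasiconvex in $G$; this gives (H2). Applying Martin's theorem to $(\mathcal{G},\YY_1)$, using hypothesis (1) of the corollary, gives that $H=\pi_1(\mathcal{G},\YY_1)$ is also hyperbolic, which is (H5).

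Having checked (H1)--(H5), Theorem \ref{acyl ct thm}(2) applies. Together with hypothesis (3) of the corollary it yields the existence of the Cannon--Thurston map $\partial H \to \partial G$, and the ``moreover'' clause of Theorem \ref{acyl ct thm}(2) translates verbatim into the quasiconvexity criterion in the statement of the corollary: $H$ is quasiconvex in $G$ iff the CT map for $B_1 \to B$ is injective.

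There is essentially no hard step, since this is a packaging result: the technical work is hidden in Martin's combination theorem (which produces the hyperbolicity and quasiconvexity data) and in Theorem \ref{acyl ct thm} (which performs the coned-off boundary analysis via Theorem \ref{electric ct}). The only place where one should be slightly careful is in noting that the CAT(0) hyperbolicity of $B$ and $B_1$ in (M3) already encodes developability of both $(\mathcal{G},\YY)$ and $(\mathcal{G},\YY_1)$, so there is no additional hypothesis to check before invoking Theorem \ref{acyl ct thm}.
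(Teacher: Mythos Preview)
Your proposal is correct and matches the paper's approach exactly: the corollary is stated as a direct consequence of Theorem \ref{acyl ct thm} once Martin's theorem supplies hyperbolicity of $G$ and $H$ and quasiconvexity of the face groups in $G$, and the paper does not even write out a separate proof beyond the sentence ``Using this theorem we deduce the following corollary to Theorem \ref{acyl ct thm}.'' One small remark: developability of $(\mathcal{G},\YY_1)$ is not really coming from the CAT(0) condition but is automatic from developability of $(\mathcal{G},\YY)$ (as noted in (H3) via \cite[Corollary 2.15, Chapter III.C]{bridson-haefliger}); this does not affect the argument.
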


\section{Other applications and examples}\label{5}

In this section we prove a few other related results about complexes of groups. 

\subsection{Polygons of groups} 

Following is the main result of this subsection that we obtain as an application of Theorem \ref{acyl ct thm}.

\begin{theorem}\label{edge qc}
Suppose $\mathcal{Y}$ is a regular Euclidean polygon with at least $4$ edges and let $(\GG,\YY)$ be a simple complex of hyperbolic groups
such that the following are satisfied:
\begin{enumerate}
\item In any vertex group, intersection of two different edge groups is equal to the subgroup coming from the barycenter of $\YY$. 
\item The universal cover $B$ of $(\GG,\YY)$ is a hyperbolic metric space.
\item The action of $G=\pi_1(\GG,\YY)$ on $B$ is acylindrical. 
\end{enumerate}

Suppose $\YY_1$ is an edge of $\YY$ and $(\GG,\YY_1)$ is the restriction of $(\GG,\YY)$ to $\YY_1$. Let $H=\pi_1(\GG,\YY_1)$.
Then $G$ is a hyperbolic group and $H$ is a quasiconvex subgroup of $G$.
\end{theorem}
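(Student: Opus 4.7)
The strategy is to reduce to Corollary \ref{martin application}. Since $\YY_1$ is an edge of $\YY$, the restricted complex $(\GG,\YY_1)$ is a graph of groups over a single edge, so its development $B_1$ is a simplicial tree; in particular $B_1$ is automatically $\text{CAT}(0)$ and hyperbolic, and the conditions (M1)--(M3) on $(\GG,\YY_1)$ hold as soon as we know the induced map $B_1 \to B$ is sufficiently nice. The heart of the proof is thus to establish that the natural map $B_1 \to B$ is a quasi-isometric embedding; every other assertion of the theorem then falls out essentially for free.

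Assume for the moment that $B_1 \to B$ is a qi embedding. The action $H \acts B_1$ is then acylindrical, since it is the restriction of the acylindrical action $G \acts B$ along an $H$-equivariant qi embedding; this supplies the remaining hypothesis (M4) of Martin's theorem for $(\GG,\YY_1)$, so $H$ is hyperbolic. Injectivity of $H \to G$ follows from the qi embedding as well: an element of the kernel fixes $B_1$ pointwise, hence lies in every vertex stabilizer along an edge path in $B_1$; condition (1) of the theorem together with the standard description of stabilizers in the development of a developable complex of groups then forces such an element to be trivial. A qi embedding automatically satisfies Mitra's criterion (Lemma \ref{mitra's criterion}), and by Proposition \ref{ct example1} the induced CT map $\partial B_1 \to \partial B$ is injective. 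Corollary \ref{martin application} then delivers quasiconvexity of $H$ in $G$, while hyperbolicity of $G$ is already furnished by Martin's theorem applied to $(\GG,\YY)$.

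The core task, therefore, is to show $B_1 \hookrightarrow B$ is a qi embedding, and this is where the hypothesis that $\YY$ is a regular Euclidean $n$-gon with $n \geq 4$ enters. Each interior angle of $\YY$ is $(n-2)\pi/n \geq \pi/2$. At any vertex $\tilde v \in B$ lying above a vertex $v$ of $\YY$, distinct lifts of the edges of $\YY$ through $\tilde v$ correspond---by condition (1)---to distinct cosets of $G_{e_1}\cap G_{e_2}=G_{\text{barycenter}(\YY)}$ in $G_v$, and hence are genuinely transverse, meeting at angle $\geq \pi/2$ in the Euclidean polygonal metric on $B$. My plan is to show that the subcomplex $\iota(B_1)\subset B$ consisting of all lifts of the edge $\YY_1$ is locally convex at every vertex $\tilde v \in \iota(B_1)$: the link of $\tilde v$ in $B$ is a $\text{CAT}(1)$ graph of systole $\geq 2\pi$ (this is the link condition that makes $B$ $\text{CAT}(0)$ under the conditions of Martin's theorem for a Euclidean polygon of groups with $n\geq 4$), and the $\pi/2$ angle bound ensures the two link vertices corresponding to a pair of edges of $B_1$ through $\tilde v$ are separated by distance $\geq \pi$, which is exactly the local convexity criterion. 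A standard induction on edge-length then upgrades local convexity to global convexity of $B_1$ in the $\text{CAT}(0)$ space $B$, giving an isometric (hence qi) embedding.

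The main obstacle I anticipate is the rigorous verification of local convexity of $B_1$ at shared vertices and a clean argument that the apparatus of condition (1) rules out ``accidental'' coincidences of distinct lifts of $\YY_1$ meeting at $\tilde v$. In particular, one needs to be careful that two distinct edges of $B_1$ meeting at $\tilde v$ really are in distinct $2$-cells of $B$ and not inside a common polygon---this is exactly what condition (1) provides, but translating it into an angle statement through the description of $B$ as a quotient $G \times (\bigsqcup \sigma) / \sim$ requires some combinatorial bookkeeping. Once the local convexity is in hand, the global qi embedding, and hence the theorem, follows by the scheme outlined above.
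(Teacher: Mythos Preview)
Your proposal is correct and follows essentially the same architecture as the paper's proof: first show that $B$ is $\mathrm{CAT}(0)$ (via condition~(1) and the $n\geq 4$ hypothesis, which the paper gets by citing \cite[12.29, II.12]{bridson-haefliger}), then prove that the natural map $B_1\to B$ is an isometric embedding, and finally feed this into the paper's machinery (you invoke Corollary~\ref{martin application}, the paper invokes Theorem~\ref{acyl ct thm}; these are equivalent here).

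The only real difference is in how the local-geodesic condition at a vertex of $B_1$ is verified. You propose the standard $\mathrm{CAT}(0)$ link criterion: two consecutive edges of $B_1$ at a vertex $\tilde v$ are both lifts of the single edge $\YY_1$, hence cannot lie in a common polygon of $B$ (each polygon contains exactly one lift of $\YY_1$ at $\tilde v$), so the corresponding link vertices are at combinatorial distance $\geq 2$, hence metric distance $\geq 2\cdot\tfrac{(n-2)\pi}{n}\geq\pi$. The paper instead proves this by hand via an explicit $m$-string argument (its Lemma~\ref{polygon lemma 2}), which is really the same computation unwound without invoking the link formalism. Your route is slightly more conceptual and shorter if one is willing to quote the link criterion; the paper's is more self-contained. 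Either way, once local geodesics are established, the $\mathrm{CAT}(0)$ local-to-global principle gives the isometric embedding, and the rest is identical.
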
 

\proof We begin with the following simple observation.
By \cite[12.29, p. 390, II.12]{bridson-haefliger}, $(\GG,\YY)$ is non-positively curved and 
hence it is developable. Also
$B$ is a piecewise Euclidean polygon complex, and by \cite[Theorem 7.50, I.7]{bridson-haefliger}
$B$ is a complete geodesic metric space since all simplices are isometric to each other.
It follows that $B$ is a CAT(0) space (see \cite[paragraph after Theorem 4.17, p. 562, III.C]{bridson-haefliger}). 
Thus all the hypotheses of Martin's theorem are satisfied for $(\GG,\YY)$ and hence $G$ is a hyperbolic group.

Next part of the proof makes use of the following three lemmas. We shall need the following definition. 
Let $d$ denote the metric on $B$ and let $d_P$ denote the metric on any polygon $P$ in $B$. We also assume that each edge in $B$ is of length $1$.

\begin{defn}{\em (\cite[Chapter I.7, Definition 7.8]{bridson-haefliger})}
	Suppose $x\in B$. Define $$\epsilon(x)=\inf\{\epsilon(x,P): \text{$P$ is a polygon in $B$ containing $x$} \}$$
	where $\epsilon(x,P):=\{d_P(x,e):e \text{ is an edge of $P$ not containing $x$}\}$.
\end{defn}
The lemma below can be proved in the same way as \cite[Lemma 7.9]{bridson-haefliger}. 
For the sake of completeness, we include a sketch of proof. 

\begin{lemma}\label{pologon lemma 1}
	Fix $x\in B$. If $y\in B$ is such that $d(x,y)< \epsilon(x)$ then any polygon $P$
	which contains $y$ also contains $x$ and $d(x,y)= d_P(x,y)$.
\end{lemma}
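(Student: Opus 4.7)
The plan is to follow the argument for Lemma I.7.9 of \cite{bridson-haefliger}, specialised to our polygon complex. The two key inputs I will use are that $B$ is a CAT(0) metric space (as observed at the start of the proof of Theorem \ref{edge qc}) and that every polygon of $B$ is isometric to the regular Euclidean polygon $\YY$. Write $\mathrm{St}(x)\subseteq B$ for the closed star of $x$, i.e.\ the union of all closed polygons of $B$ which contain $x$. The proof naturally breaks into two steps: first show $y\in \mathrm{St}(x)$, then deduce the full statement for an arbitrary polygon $P$ containing $y$ by approximation.

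For the first step, let $\gamma\colon [0,\ell]\map B$ be the unique CAT(0) geodesic from $x$ to $y$, where $\ell=d(x,y)<\epsilon(x)$. I would show $\gamma([0,\ell])\subseteq \mathrm{St}(x)$ by a connectedness argument. Put $T=\sup\{t\in [0,\ell]:\gamma([0,t])\subseteq \mathrm{St}(x)\}$ and assume for contradiction that $T<\ell$. Then $\gamma(T)\in \mathrm{St}(x)$ lies in some polygon $P'$ containing $x$, and for $t$ slightly greater than $T$ the curve $\gamma$ has just crossed an edge $e$ of $P'$; by the definition of $T$, the edge $e$ cannot contain $x$, for otherwise any polygon adjacent to $e$ along $\gamma$ would still lie in $\mathrm{St}(x)$. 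Let $p\in e$ be the crossing point. Since $P'$ is a convex subset of the CAT(0) space $B$, the Euclidean segment from $x$ to $p$ inside $P'$ is the $B$-geodesic between them, so $d(x,p)=d_{P'}(x,p)\geq \epsilon(x,P')\geq \epsilon(x)$. But $d(x,p)\leq T\leq \ell<\epsilon(x)$, a contradiction. Hence $T=\ell$ and $y=\gamma(\ell)\in \mathrm{St}(x)$.

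For the second step, take any polygon $P$ containing $y$. Since $\YY$, and hence $P$, is two-dimensional, $\mathrm{int}(P)$ is dense in $P$; pick $y_n\in \mathrm{int}(P)$ with $y_n\to y$, so that $d(x,y_n)<\epsilon(x)$ for $n$ large. Applying the first step to each such $y_n$ yields a polygon $P_n\ni x$ with $y_n\in P_n$. Because the only polygon containing the interior point $y_n$ of $P$ is $P$ itself, $P_n=P$, whence $x\in P$. Finally, $d(x,y)\leq d_P(x,y)$ is immediate, and the reverse inequality follows from the same convexity fact: the Euclidean segment from $x$ to $y$ inside $P$ realises $d_P(x,y)$ and is, by CAT(0) geometry of $B$, the $B$-geodesic from $x$ to $y$, so $d_P(x,y)=d(x,y)$.

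The main technical point is the convexity statement used twice in the argument, namely that any closed polygon $P'\subseteq B$ embeds convexly, so that the intrinsic $P'$-distance of two points in $P'$ agrees with their $B$-distance. This is a standard consequence of $B$ being a CAT(0) piecewise-Euclidean complex built from isometric copies of $\YY$; once it is granted, the rest of the proof is essentially bookkeeping together with the definition of $\epsilon(x)$.
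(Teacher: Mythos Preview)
Your argument has a circularity problem. The ``main technical point'' you invoke twice --- that every closed polygon $P'\subseteq B$ embeds convexly, so that $d_{P'}=d|_{P'}$ --- is precisely the content of Lemma~\ref{iso. embedd. poly.}, which in the paper is proved \emph{after} Lemma~\ref{pologon lemma 1} and \emph{uses} Lemma~\ref{pologon lemma 1} in its proof (the interior of $P$ is shown to be isometrically embedded by applying Lemma~\ref{pologon lemma 1} to each point of an internal Euclidean segment to get a local geodesic, and then invoking CAT(0)). So within the paper's logical structure you are assuming what you want to prove. Even if you try to import this convexity from \cite{bridson-haefliger} directly, the standard route there to ``cells are isometrically embedded'' in an $M_\kappa$-complex passes through exactly this lemma (their Lemma~I.7.9); there is no shortcut that gives you convexity of cells for free once CAT(0) is known.

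The paper's proof avoids this entirely by working with $m$-strings in the sense of \cite[I.7, Definition~7.4]{bridson-haefliger}: one shows that any $m$-string $\Sigma=(x,x_1,\dots,x_n=y)$ of length $<\epsilon(x)$ can be shortened to an $(m-1)$-string $(x,x_2,\dots,x_n)$ of no greater length, because $l(\Sigma)<\epsilon(x)$ forces $x$ to lie in the polygon containing $x_1,x_2$. Iterating forces $x$ and $y$ into a common polygon with $d_P(x,y)\leq l(\Sigma)$. This argument uses neither CAT(0) nor any convexity of cells --- which is exactly why it can then be used as the input to Lemma~\ref{iso. embedd. poly.}. If you want to salvage your approach, you would need an independent proof that polygons are locally convex at boundary points, and that does not seem to be available without the string machinery.
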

{\em Sketch of proof:} To prove the lemma, it is sufficient to show that if 
$\Sigma=(x=x_0,x_1,...,x_n=y)$ is an $m$-string (see \cite[I.7, Definition 7.4]{bridson-haefliger}) 
of length $l(\Sigma)<\epsilon(x)$, with $m\geq 2$, then $\Sigma'=(x_0,x_2,...,x_n)$ is an $(m-1)$-string 
with length $l(\Sigma')\leq l(\Sigma)$. Now, by definition of $m$-string, there is a polygon $P_1$ such 
that $x_1,x_2\in P_1$. Since $l(\Sigma)< \epsilon(x)$, $x_0\in P_1$. By triangle inequality, 
$d_{P_1}(x_0,x_2)\leq d_{P_1}(x_0,x_1)+d_{P_1}(x_1,x_2)$. Thus, $(x_0,x_2,...,x_n)$ is an 
$(m-1)$-string of length less that or equal to $l(\Sigma)$.  \qed

\smallskip
The following lemma must be well known or obvious but we could not find a written proof anywhere in the literature.
Hence we include a proof here.
\begin{lemma}\label{iso. embedd. poly.}
	The inclusion of any polygon $P$ in $B$ is an isometric embedding where $P$ is given its Euclidean metric.
\end{lemma}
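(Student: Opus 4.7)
The plan is to use the CAT(0) structure of $B$, which was established at the start of the proof of Theorem \ref{edge qc}, together with Lemma \ref{pologon lemma 1}, to show that the Euclidean geodesic in $P$ between two given points is also a geodesic in $B$. The easy direction $d(x,y)\leq d_P(x,y)$ holds for trivial reasons: $d$ is the length metric on $B$ and any path inside $P$ has the same length whether measured in $P$ or in $B$. For the reverse inequality, given $x,y\in P$, I will consider the Euclidean straight-line geodesic $\gamma'\colon[0,L]\to P$ from $x$ to $y$, where $L:=d_P(x,y)$, and argue that $\gamma'$ is in fact a geodesic in $B$.

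The key step is to show that $\gamma'$ is a \emph{local} geodesic in $B$ in the sense of \cite[Chapter II.1]{bridson-haefliger}. For each $t\in[0,L]$, applying Lemma \ref{pologon lemma 1} at $\gamma'(t)$ produces a radius $\eta_t>0$ such that every point of $B$ within $B$-distance $\eta_t$ of $\gamma'(t)$ lies in $P$ and has the same $B$- and $P$-distance from $\gamma'(t)$. Since $[0,L]$ is compact, I would cover it by the intervals $(t-\eta_t/2,\,t+\eta_t/2)$, extract a finite subcover, and thereby produce a uniform $\eta>0$ such that for all $s,s'\in[0,L]$ with $|s-s'|<\eta$ one has $d(\gamma'(s),\gamma'(s'))=d_P(\gamma'(s),\gamma'(s'))=|s-s'|$, the last equality because $\gamma'$ is the Euclidean geodesic in $P$ parametrized by arc length. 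This exhibits $\gamma'$ as a local geodesic in $B$.

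Since $B$ is a CAT(0) space, every local geodesic is a global geodesic, so $\gamma'$ realizes the distance $d(x,y)$ and has length $L$; hence $d(x,y)=L=d_P(x,y)$, as required. The main (and rather mild) obstacle in this plan is the extraction of the uniform $\eta$ on $\gamma'$ from the pointwise bounds $\eta_t$ of Lemma \ref{pologon lemma 1}; once this is done, the CAT(0) inheritance of the local-to-global geodesic property, together with uniqueness of geodesics in CAT(0) spaces, closes the argument.
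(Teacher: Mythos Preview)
Your overall strategy --- show that the Euclidean segment $\gamma'$ is a local geodesic in $B$ using Lemma \ref{pologon lemma 1}, then invoke the CAT(0) local-to-global principle --- is exactly the paper's. But there is a gap in how you pass from Lemma \ref{pologon lemma 1} to the local-geodesic property. The lemma gives only \emph{radial} information: for $|s-t|<\epsilon(\gamma'(t))$ one gets $d(\gamma'(t),\gamma'(s))=|t-s|$. Being a local geodesic at $t$ requires the \emph{pairwise} statement $d(\gamma'(s),\gamma'(s'))=|s-s'|$ for all $s,s'$ in a neighborhood of $t$, and your compactness step does not bridge this: a finite subcover centred at points $t_i$ still only controls distances \emph{from} the $t_i$'s, not between arbitrary nearby $s,s'$ (the case $s<t_i<s'$ is the problematic one). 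Incidentally, the uniform $\eta$ is not needed at all --- ``local geodesic'' is a pointwise condition.

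The radial-to-pairwise upgrade is easy when $\gamma'(t)$ lies in the \emph{interior} of $P$: there $\epsilon$ is $1$-Lipschitz, so $\epsilon(\gamma'(s))\geq\epsilon(\gamma'(t))-|s-t|$ and one may apply the lemma centred at $\gamma'(s)$ instead. But if $\gamma'$ meets $\partial P$ the function $\epsilon\circ\gamma'$ drops to $0$ on the interior side of the boundary point, and this repair fails. The paper sidesteps the issue by first proving $d=d_P$ only for \emph{interior} $x,y$ --- so that $\gamma'$ stays in the interior by convexity of $P$ and the local-geodesic argument goes through cleanly --- and then extends to arbitrary $x,y\in P$ by approximating with interior points and using the continuous dependence of CAT(0) geodesics on their endpoints (\cite[Proposition 1.4(1), II.1]{bridson-haefliger}). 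That two-step decomposition is what your direct compactness argument is missing.
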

\proof
Suppose $x,y\in P$ are two interior points and $[x,y]_P$ is the straightline joining $x,y$ in $P$.
For all $z\in [x,y]_P$ the ball of radius $\epsilon(z)$ in $P$ is isometrically embedded in $B$ by Lemma \ref{pologon lemma 1}.
Hence, a small neighborhood of $z$ in $[x,y]_P$ is isometrically embedded in $B$. This shows that $[x,y]_P$ is a local
geodesic in $B$. Since $B$ is a CAT(0) space, it follows that $[x,y]_P$ is a geodesic in $B$. Thus the interior of $P$
is isometrically embedded in $B$. 
The lemma follows easily from this and the following standard fact (see \cite[Proposition 1.4(1), II.1]{bridson-haefliger}):\\ 
 {\em Suppose $\{z_n\},\{z'_n\}$
are two sequences in a CAT(0) space $Z$ and $z,z'\in Z$ such that $z_n\map z, z'_n\map z'$. Let 
$\alpha_n:[0,1]\map Z$ be a constant speed geodesic joining  $z_n,z'_n$ in $Z$ for all $n\in \NN$. Then $\alpha_n$ converges 
uniformly to a constant speed geodesic $\alpha:[0,1]\map Z$ joining $z,z'$.} \qed
\begin{lemma}\label{polygon lemma 2}
	Let $P_1$ and $P_2$ be two distinct polygons in $B$. Suppose $e_1\subset P_1$ and $e_2\subset P_2$ are edges
such that $e_1\cap P_2=e_2\cap P_1=e_1\cap e_2$ is a vertex. Then the concatenation of $e_1$ and $e_2$ is a geodesic in $B$.
\end{lemma}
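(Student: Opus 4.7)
The plan is to deduce the lemma from the CAT(0) local-to-global geodesicity principle. Since $B$ is CAT(0) (established in the opening paragraph of the proof of Theorem \ref{edge qc}), every local geodesic in $B$ is a geodesic, so it suffices to prove that $e_1 * e_2$ is locally a geodesic at every one of its points. At any interior point of $e_1$ or $e_2$ this is immediate from Lemma \ref{iso. embedd. poly.} together with Lemma \ref{pologon lemma 1}: such a point is a regular point on a Euclidean segment inside the isometrically embedded polygon $P_1$ or $P_2$, and straight segments in Euclidean polygons are geodesics. The real content lies at the common vertex $v$.

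Local geodesicity of $e_1 * e_2$ at $v$ amounts to the following: for all sufficiently close $x \in e_1$ and $y \in e_2$, the geodesic $[x,y]$ in $B$ passes through $v$ and thus has length $d(x,v)+d(v,y)$. In a small ball about $v$, $B$ is isometric to the Euclidean cone on the link $\mathrm{Lk}(v, B)$, and a standard cone-metric computation shows that geodesics in this cone pass through the cone point precisely when the intrinsic distance in the link between the two initial directions is at least $\pi$. So local geodesicity at $v$ reduces to showing that the link distance from the direction of $e_1$ to that of $e_2$ is at least $\pi$. The link is a metric graph in which each polygon $P$ of $B$ containing $v$ contributes a single arc of length $\theta := (n-2)\pi/n$ joining $P$'s two edges at $v$, where $n \geq 4$ is the number of sides of $\YY$. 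Hence $\theta \geq \pi/2$, and if no single polygon of $B$ contains both $e_1$ and $e_2$, then the shortest path in the link from $e_1$ to $e_2$ uses at least two arcs and has length at least $2\theta \geq \pi$.

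The main obstacle is therefore to rule out a single polygon $P_3$ containing both $e_1$ and $e_2$. The plan is to use the coset description of $B$ given in the excerpt. Fixing an identification of the stabilizer of $v$ with its vertex group $G_v$, polygons at $v$ correspond to cosets of the face subgroup $G_\sigma$ in $G_v$, while edges at $v$ lifting an edge $e'$ of $\YY$ correspond to cosets of the edge group $G_{e'}$. Writing $e_i = g_i G_{e_i'}$, a polygon $g G_\sigma$ contains both $e_1$ and $e_2$ if and only if $g \in g_1 G_{e_1'} \cap g_2 G_{e_2'}$. When $e_1' = e_2'$ --- which is the situation in the intended application in the proof of Theorem \ref{edge qc}, since $\YY_1$ is a single edge of $\YY$ --- this intersection is automatically empty because $e_1 \neq e_2$ forces $g_1 G_{e_1'} \neq g_2 G_{e_1'}$. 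In the general case $e_1' \neq e_2'$, condition (1) of Theorem \ref{edge qc} gives $G_{e_1'} \cap G_{e_2'} = G_\sigma$, so the intersection is either empty or a single coset of $G_\sigma$; a coset computation using $P_1 \neq P_2$ together with the hypotheses $e_i \cap P_j = v$ then rules out the non-empty case.

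Once the no-common-polygon claim is in hand, the link distance is at least $2\theta \geq \pi$, so the CAT(0) cone-metric computation confirms that $e_1 * e_2$ is a local geodesic at $v$ and hence a geodesic in $B$ by the local-to-global principle.
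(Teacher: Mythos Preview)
Your link-condition argument is a clean alternative to the paper's direct $m$-string computation. Both ultimately exploit that the interior angle of a regular $n$-gon is at least $\pi/2$ for $n\geq 4$, but you invoke the cone-metric description of a neighbourhood of $v$ and the CAT(0) local-to-global principle, whereas the paper estimates the length of an arbitrary $m$-string between nearby points $x\in e_1$, $y\in e_2$ by hand. Your packaging is more conceptual and makes the role of the link transparent; the paper's is more self-contained (no appeal to the cone structure). For the case $e_1'=e_2'$ your argument is complete, and as you correctly observe this is the only case invoked in the proof of Theorem~\ref{edge qc}.

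Your sketch for $e_1'\neq e_2'$, however, does not go through: the hypotheses do \emph{not} rule out a third polygon $P_3$ containing both edges. Take $G_\tau=1$, $G_{e_1'}=\langle s_1\rangle$ and $G_{e_2'}=\langle s_2\rangle$ both of order two inside $G_{v'}$ with $s_1\neq s_2$. Then $e_1=G_{e_1'}$, $e_2=G_{e_2'}$, $P_1=s_1G_\tau$, $P_2=s_2G_\tau$ satisfy all the hypotheses of the lemma (and condition~(1) of Theorem~\ref{edge qc}), yet the polygon $P_3=G_\tau$ contains both $e_1$ and $e_2$ as its two sides at $v$; since the interior angle there is $(n-2)\pi/n<\pi$, the concatenation $e_1*e_2$ is not locally geodesic, so the lemma is actually false in that generality. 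The paper's own proof shares this blind spot: its concluding inequality $l(\Sigma)\geq d_{P_0'}(x,x_1)+d_{P_{n-1}'}(x_{n-1},x_n)>1/2$ double-counts a single term when the string has $n=1$, precisely the case where one polygon contains both $x$ and $y$. Since the application only needs edges of the same type, you can simply restrict to that case and delete the general-case paragraph.
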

\begin{proof}
	 Let $\alpha$ be the concatenation of $e_1$ and $e_2$. By Lemma \ref{iso. embedd. poly.}, $e_1,e_2$ are geodesics in $B$. 
Let $v:= e_1\cap e_2$. To prove the lemma, it is sufficient to show that a small neighborhood of $v$ in $\alpha$ embeds 
isometrically in $B$. Let $I$ be the open ball of radius $1/4$ around $v$ in $\alpha$. Suppose $x,y$ are the endpoints 
of $I$ lying in $e_1,e_2$ respectively. Note that $\Sigma_0=(x,v,y)$ is a $2$-string in $B$ and $l(\Sigma_0)=1/2$. 
If there is no string in $B$, except $\Sigma_0$, connecting $x$ and $y$ then we are done. Therefore, we show that if 
$\Sigma=(x_0=x,x_1,...,x_n=y)$ is any other $n$-string in $B$ then $l(\Sigma)>l(\Sigma_0)$. By definition of an $n$-string, 
there exists a sequence of polygons, say $(P_0',P_1',...,P_{n-1}')$, such that $x_i,x_{i+1}\in P_i'$ for $i=0,1,...,(n-1)$.
 Without loss of generality, we can assume that $x_i,x_{i+1}$ lie on different sides of $P_i'$ for $0\leq i\leq (n-1)$. Note 
that if $x_1$ belongs to a side of $P_0'$ not containing $v$ then $d_{P_0'}(x,x_1)>1/2$ as the angle at each vertex 
of $P_0'$ is at least $\pi/2$ and hence $l(\Sigma)>1/2$. Thus we assume that $x_1$ belongs to a side of 
$P_0'$, say $e_3$, containing $v$. Since the angle between $e_1$ and $e_3$ is at least $\pi/2$, 
$d_{P_0'}(x,x_1)> 1/4$. By the same reasoning, $x_2$ belongs to a side of $P_1'$ containing $v$. By continuing in 
this way either $x_{n-1}$ belongs to a side of $P_{n-1}'$ containing $v$ or $x_{n-1}$ lies on a side of $P_{n-1}'$ not 
containing $v$. In either case $d_{P_{n-1}'}(x_{n-1},x_n)>1/4$. Hence, 
$l(\Sigma)\geq d_{P_0'}(x,x_1)+d_{P_{n-1}'}(x_{n-1},x_n)>1/2$. This completes the proof.
\end{proof}

{\em Proof of Theorem \ref{edge qc} continued:}
	Let $v,w$ be the vertices and $e$ the barycenter of $Y_1$. 
Then $H=G_v\ast_{G_e}G_w$. Let $B_1$ denote the Bass-Serre tree for the amalgam decomposition of $H$. 
We note that there is a natural homomorphism $H\map G$ and a natural simplicial map $B_1\map B$.

{\bf Claim:} The natural map $B_1\map B$ is an isometric embedding.\\
{\em Proof of Claim.} Since $B$ is a CAT(0) space, it is enough to show that the restriction of the
map $B_1\map B$ to any geodesic of $B_1$ is a local isometry. Let  $\alpha$ be any geodesic
	of $B_1$. Then the edges on $\alpha$ are mapped to geodesics 
in $B$ by Lemma \ref{iso. embedd. poly.}. Thus, the map $\alpha\map B$ is locally isometric at all points of $\alpha$
other than possibly the vertices. Now, suppose $b_1,b,b_2$ are consecutive vertices on $\alpha$.
	We know that vertices and edges of $B_1$ and $B$ correspond to cosets of vertex and edge groups in $H$ and $G$ respectively,
	and the $2$-dimensional faces of $B$ correspond to cosets of $G_{\tau}$ in $G$, where $\tau$ is the $2$-dimensional face of $Y$	(see \cite[Theorem 2.13, Chapter III.C]{bridson-haefliger}). 
 Since the map $B_1\map B$ is equivariant under the homomorphism $H\map G$
and there are only two orbits of vertices  and one orbit of edges under the $H$-action, we may assume
without loss of generality that  $b$ corresponds to the local group $G_v$ (i.e.
the stabilizer of the vertex $b$ of $\alpha$ is $G_v$), $b_1$ correspond to the $G_w$ and 
$b_2$ correspond to $g_vG_w$, where $g_v\in G_v\setminus G_e$. We note the following. (1) $g_v\not \in G_{\tau}$, (2) $b,b_1$ are in
the $2$-dimensional face corresponding to $G_{\tau}$, but (3) $b,b_2$ are in the $2$-dimensional face
corresponding to $g_vG_{\tau}$. Consequently the edges $[b,b_1]$ and $[b,b_2]$ lie in two distinct polygons
in $B$ and satisfy the hypotheses of Lemma \ref{polygon lemma 2}. 
Hence, the concatenation of these edges is a geodesic in $B$. This proves that the inclusion $B_1\map B$ is
an isometric embedding.

It then follows that (1) the homomorphism $H\map G$ is injective; (2) the $H$-action on $B_1$ is acylindrical,
since $G$-action on $B$ is acylindrical. Hence, $H$ is a hyperbolic group by \cite[Theorem 3.7]{ilyakapovichcomb}.
Also since $B_1\map B$ is an isometric embedding Mitra's criterion clearly holds, and the CT map $\partial B_1\map \partial B$
is injective by Lemma \ref{bdry basic prop}(3). Thus all the hypotheses of Theorem \ref{acyl ct thm} are verified. Hence, 
$H$ is quasiconvex in $G$.
\qed

\begin{rem}
(1) Theorem \ref{edge qc} is false for a triangle of groups in general. In this case, the natural homomorphism 
$\pi_1(\GG,\YY_1)\rightarrow \pi_1(\GG,\YY)$ need not even be injective. However, if $\YY$ is a polygon with at least $4$ edges 
and if $(\GG,\YY)$ satisfies the condition (1) of Theorem \ref{edge qc} then it follows that $(\GG,\YY)$ is developable (see \cite[12.29, p. 390, II.12]{bridson-haefliger}) and that the natural homomorphism $\pi_1(\GG,\YY_1)\rightarrow \pi_1(\GG,\YY)$ is injective. For the latter see the proof of Corollary 
\ref{graph qc}(1) below.  

Even there are examples of developable triangles of groups such that development is a CAT(0) hyperbolic space, the fundamental group $G$ of the triangle of groups is hyperbolic, but amalgamated free product corresponding to an edge is not quasiconvex in $G$, see Example \ref{not qc edge}.
	
(2) Let $\YY$ be an Euclidean polygon with at least $4$ edges. Suppose $(\GG,\YY)$ is a developable simple polygon of groups. Then $(\GG,\YY)$ satisfy the conditions (M2)-(M4) of Martin's theorem if and only if $(\GG,\YY)$ satisfies the hypotheses of Theorem \ref{edge qc}.
\end{rem}

Next we obtain a generalization of Theorem \ref{edge qc} as follows.
Suppose $(\GG,\YY)$ is a polygon of groups as in Theorem \ref{edge qc},
Suppose $\YY_2$ is a connected subgraph of the boundary of $\YY$ consisting of $n-3$ edges.
Suppose $v,w\in \YY$ are the two vertices in the complement of $\YY_2$. 
Let $J$ be the line segment inside $\YY$ joining midpoints of the edges incident on $v$ and $w$ respectively on
the opposite side of the edge $[v,w]$. Let $\YY_1$ be the edge $[v,w]$.
Let $G_1=\pi_1(\GG,\YY_1)$ and $G_2=\pi_1(\GG, \YY_2)$. Let $e_1,e_2$ be the edges incident on $v,w$ respectively
whose midpoints are joined by $J$. Let $\tau$ denote the barycenter of $\YY$ as before. 
Suppose $K=G_{e_1}\ast_{G_\tau}G_{e_2}$ be the natural amalgamated free product for the monomorphisms
$G_{\tau}\map G_{e_i}$, $i=1,2$. 
Since $(\GG,\YY_2)$ is a tree of groups, we can write $G_2$ as an amalgamated free product. 
Hence, there are the natural homomorphisms $K\map G_i$, $i=1,2$. Now we have the following corollary to
Theorem \ref{edge qc}.

\begin{cor}\label{graph qc}
(1) The homomorphisms $K\map G_i$, $i=1,2$ are injective whence we have an amalgam decomposition
$G=G_1*_K G_2$.

(2) $G_1,G_2, K$ are all quasiconvex subgroups of $G$.

(3) If $\YY'$ is any connected subgraph of $\YY_2$ then $G'=\pi_1(\GG,\YY')$ is quasiconvex in $G$.
\end{cor}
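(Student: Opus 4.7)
The plan is to first establish the amalgam decomposition in (1) by cutting $\YY$ along $J$, and then to deduce the quasiconvexity statements in (2) and (3) by combining Theorem \ref{edge qc}, Theorem \ref{acyl ct thm}, and Short's intersection theorem for quasiconvex subgroups of hyperbolic groups.

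First I would subdivide $\YY$ by adding the midpoints $m_1, m_2$ of $e_1, e_2$ as new vertices; this leaves the fundamental group unchanged and makes $J$ a simplicial subcomplex. Then $\YY = P_1 \cup P_2$ with $P_1 \cap P_2 = J$, where $P_1$ is the pentagonal region containing $\YY_1$ and $P_2$ is the polygonal region containing $\YY_2$. The intersection condition in Theorem \ref{edge qc}(1) guarantees that every local group attached to a simplex of $P_1 \setminus \YY_1$ already sits inside $G_v$ or $G_w$, so computing $\pi_1$ as the colimit of local groups for a simple complex of groups on a simply connected base gives $\pi_1(\GG, P_1) = G_1$; analogously $\pi_1(\GG, P_2) = G_2$, and directly $\pi_1(\GG, J) = K$. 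Seifert--van Kampen for developable complexes of groups (equivalently, Bass--Serre theory for the quotient tree dual to the $G$-orbit of $J$ in $B$) then yields $G = G_1 *_K G_2$. Injectivity of $K \hookrightarrow G_i$ follows, in the spirit of Theorem \ref{edge qc}, from showing that the universal cover of $(\GG, J)$ embeds isometrically in the universal cover of $(\GG, P_i)$; concretely one checks via amalgamated-product normal form in $G_i$ that the intersection condition of Theorem \ref{edge qc}(1) prevents any non-trivial element of $K$ from collapsing.

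For (2), $G_1$ is quasiconvex by Theorem \ref{edge qc}. For $G_2$ I would verify the hypotheses of Theorem \ref{acyl ct thm} for the pair $(\GG, \YY_2) \subset (\GG, \YY)$. The only non-routine check is that the universal cover $B_2$ of $(\GG, \YY_2)$ embeds isometrically into $B$: a geodesic of $B_2$ is a concatenation of edges of $B$ each lying in a distinct polygon and meeting the next at a vertex, and Lemma \ref{polygon lemma 2} shows each vertex-crossing is locally geodesic in $B$, so the concatenation is a local geodesic in the CAT(0) space $B$ and hence a geodesic. With $G_1$ and $G_2$ both quasiconvex, Bass--Serre theory applied to $G = G_1 *_K G_2$ gives $K = G_1 \cap G_2$, so $K$ is quasiconvex by Short's theorem that the intersection of two quasiconvex subgroups of a hyperbolic group is quasiconvex.

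For (3), the same isometric-embedding argument applied to a connected subgraph $\YY' \subseteq \YY_2$ shows that the universal cover $B'$ of $(\GG, \YY')$ sits isometrically in $B$, and another application of Theorem \ref{acyl ct thm} gives $G'$ quasiconvex in $G$. The principal technical obstacle throughout is this iterated isometric-embedding claim: Lemma \ref{polygon lemma 2} dispatches one vertex-crossing, and one must check carefully that the regular Euclidean polygon structure with at least four sides supplies the uniform $\geq \pi/2$ angle bound needed to propagate the local-geodesic argument across arbitrarily long geodesics of $B_2$ and $B'$.
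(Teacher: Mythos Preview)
Your argument for (2) and (3) has a real gap in the claim that $B_2\to B$ (and likewise $B'\to B$) is an isometric embedding. The assertion that consecutive edges of a $B_2$-geodesic lie in \emph{distinct} polygons of $B$ fails as soon as $\YY_2$ has an interior vertex, i.e.\ whenever $n\geq 5$. Concretely, let $v_i$ be a vertex of $\YY_2$ adjacent to two edges $e_j,e_k$ of $\YY_2$. In the Bass--Serre tree $B_2$ the path $gG_{e_j}\,\text{--}\,gG_{v_i}\,\text{--}\,gG_{e_k}$ is a geodesic segment, yet in $B$ both edges $gG_{e_j}$ and $gG_{e_k}$ lie on the \emph{same} polygon $gG_\tau$. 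Since that polygon is isometrically embedded in $B$ (Lemma \ref{iso. embedd. poly.}) and two consecutive sides of a regular Euclidean $n$-gon meet at angle $(n-2)\pi/n<\pi$, their concatenation is not a local geodesic in $B$: the chord across the polygon is strictly shorter. So Lemma \ref{polygon lemma 2} does not apply at such a crossing, the $\geq\pi/2$ angle bound you invoke is not enough (local geodesics in CAT(0) need angle $\geq\pi$), and $B_2\to B$ is not isometric. What makes Theorem \ref{edge qc} go through is precisely that $\YY_1$ is a \emph{single} edge, so consecutive edges in $B_1$ are always distinct cosets of the same edge group, forcing distinct polygons.

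The paper sidesteps this geometric difficulty entirely and argues algebraically. Since the vertex groups are quasiconvex in the hyperbolic group $G$, they have finite height in $G$ by \cite{GMRS}, hence finite height in $G_2$; this makes the $G_2$-action on its Bass--Serre tree acylindrical (Lemma \ref{finite height implies acyl}), so $G_2$ is hyperbolic by \cite[Theorem 3.7]{ilyakapovichcomb}. Then $K$ is quasiconvex in $G_1$ by \cite[Theorem 8.73]{kapovich-sardar}, hence quasiconvex in $G$ via $G_1$, and from the amalgam $G=G_1*_K G_2$ with $K$ quasiconvex one deduces $G_2$ quasiconvex. Part (3) follows by the same route: $G'$ is quasiconvex in $G_2$ again by \cite[Theorem 8.73]{kapovich-sardar}, and one composes. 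Your Short's-theorem argument for $K=G_1\cap G_2$ would work once $G_2$ is known to be quasiconvex, but that is exactly the step where your geometric route breaks.
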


We need a little preparation for the proof of the corollary.

In \cite{GMRS}, Gitik et al. generalized the concept of malnormality and introduced the following notion of {\em height} of a subgroup of a group. 
\begin{defn}
	[Height]  The height of an infinite subgroup $H$ in $G$ is the maximal $n\in\mathbb{N}$ such that if there
	exists distinct cosets $g_1H,g_2H,...,g_nH$ such that $g_1Hg_1^{-1}\cap g_2Hg_2^{-1}\cap...\cap g_nHg_n^{-1}$ is infinite.
	The height of a finite subgroup define to be $0$.
\end{defn}
In \cite{GMRS}, the authors proved the following:
\begin{theorem}\textup{(\cite[p. 322]{GMRS})}\label{finite height of qc}
	Quasiconvex subgroups of hyperbolic groups have finite height.
\end{theorem}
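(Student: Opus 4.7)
The plan is to argue by contradiction in three stages: extract a common loxodromic element from a deeply nested conjugate-intersection, use quasiconvexity to force its quasi-axis to meet every relevant coset in a uniform tube, and finally use finiteness of balls in the Cayley graph to bound the number of cosets. Suppose $H$ is $K$-quasiconvex in the $\delta$-hyperbolic group $G$ and has infinite height, so for every $n \in \NN$ there exist distinct cosets $g_1 H, \dots, g_n H$ with $I_n := \bigcap_i g_i H g_i^{-1}$ infinite. Every infinite subgroup of a hyperbolic group contains an infinite-order element, and infinite-order elements act loxodromically on the Cayley graph, so pick an $h_n \in I_n$ of infinite order. A theorem of Delzant gives a uniform positive lower bound on the stable translation length of loxodromics in any $\delta$-hyperbolic group, so the quasi-axis $\alpha_n$ of $h_n$ may be taken to be a $(\lambda_0, \epsilon_0)$-quasigeodesic line with $\lambda_0, \epsilon_0$ depending only on $\delta$.

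Since $h_n \in g_i H g_i^{-1}$, the orbit $\{h_n^k g_i : k \in \ZI\}$ lies entirely in $g_i H$ and has the same two endpoints $h_n^{\pm} \in \partial G$ as $\alpha_n$; in particular $h_n^{\pm} \in \Lambda(g_i H)$ for every $i$. Applying Lemma \ref{limit set intersection} to both rays of $\alpha_n$, and using that each $g_i H$ is $K$-quasiconvex (the same $K$ as $H$, since left translation is an isometry of the word metric), I conclude that $\alpha_n \subset N_R(g_i H)$ for a constant $R = R(\delta, K, \lambda_0, \epsilon_0)$ that does not depend on $n$ or $i$. Now fix any $p_n \in \alpha_n$. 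Then $d(p_n, g_i H) \leq R$ for every $i$, so one can pick $x_i \in g_i H$ with $d(p_n, x_i) \leq R$. Since distinct cosets of $H$ are disjoint, the elements $x_1, \dots, x_n$ are pairwise distinct members of the ball $B(p_n, R)$. By left-invariance of the word metric, $|B(p_n, R)| = |B(e, R)| =: N$ is a finite constant depending only on $G$ and $R$, so $n \leq N$, contradicting the arbitrariness of $n$.

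The main obstacle is the uniformity of all constants across $n$: the quasigeodesic parameters $(\lambda_0, \epsilon_0)$ for $\alpha_n$ must be independent of the particular $h_n$ extracted, which is where Delzant's lower bound on loxodromic translation lengths in $\delta$-hyperbolic groups plays its role. Once this uniformity is in hand, the real geometric content is the packing step: Lemma \ref{limit set intersection} converts the algebraic hypothesis ``a common loxodromic conjugate in $n$ distinct cosets'' into the geometric one ``all $n$ cosets crowd around a single point'', after which finiteness of balls in the Cayley graph closes the argument.
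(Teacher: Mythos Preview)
The paper does not give its own proof of this theorem; it simply cites \cite{GMRS}. Your argument is essentially the standard one from that reference and is correct in outline.

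Two minor points. First, your appeal to Delzant is slightly off-target: the orbit map $k\mapsto h_n^k$ has quasigeodesic constants depending on $|h_n|$, not just on the stable translation length, so a uniform lower bound on the latter does not by itself make these constants uniform in $n$. The clean fix, already available in this paper, is to take $\alpha_n$ to be any $k_{\ref{bdry basic prop}}(\delta)$-quasigeodesic line joining $h_n^-$ to $h_n^+$ as supplied by Lemma~\ref{bdry basic prop}(2)(ii); then the parameters depend only on $\delta$ and Delzant is not needed. Second, Lemma~\ref{limit set intersection} as stated only asserts that a ray limiting into $\Lambda(A)$ lies in \emph{some} finite neighborhood of $A$, without an explicit uniform constant; for your packing step you need that a bi-infinite uniform quasigeodesic with both endpoints in $\Lambda(g_iH)$ lies in $N_R(g_iH)$ with $R=R(\delta,K,\lambda_0,\epsilon_0)$. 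This uniform version is true and standard (combine Lemma~\ref{ray in qc} with stability of quasigeodesics/thinness of ideal triangles), but it is a bit stronger than what the cited lemma literally says. Once these two points are tidied, your contradiction via the ball count $|B(p_n,R)|=|B(e,R)|$ goes through exactly as written.
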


The following lemma is a simple consequence of Bass-Serre theory and hence we skip its proof.
\begin{lemma}\label{finite height implies acyl}
	Suppose $(\GG,\YY)$ is a finite graph of groups with fundamental group $G$. If all the edge groups have finite height in $G$ then the action of $G$ on the Bass-Serre tree of $(\GG,\YY)$ is acylindrical.
\end{lemma}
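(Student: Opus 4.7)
The plan is to combine a pigeonhole argument on the finitely many $G$-orbits of edges in the Bass--Serre tree with the definition of finite height. Let $T$ denote the Bass--Serre tree of $(\GG,\YY)$, let $e_1,\ldots,e_m$ be the edges of $\YY$, so that $T$ has exactly $m$ $G$-orbits of edges, and let $h$ denote the maximum of the heights of the edge groups $G_{e_1},\ldots,G_{e_m}$ in $G$. I claim that with $L := mh+1$, the pointwise stabilizer in $G$ of every geodesic segment of $T$ of length $L$ is finite, which is the form of acylindricity required by the applications in the paper.

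Fix such a geodesic segment $\gamma$, which consists of $L$ distinct edges of $T$. By the pigeonhole principle, at least $h+1$ of these edges lie in a single $G$-orbit, say the orbit of a lift $\tilde e_i \in T$ whose stabilizer equals $G_{e_i}$. Basic Bass--Serre theory identifies this orbit with the coset space $G/G_{e_i}$: each edge in the orbit is uniquely of the form $g\cdot \tilde e_i$ for some coset $gG_{e_i}$. Hence the $h+1$ edges of $\gamma$ in question correspond to $h+1$ distinct cosets $g_1 G_{e_i},\ldots,g_{h+1}G_{e_i}$, and the stabilizer of the $j$-th edge is precisely $g_j G_{e_i} g_j^{-1}$.

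The pointwise stabilizer of $\gamma$ is then contained in the intersection $\bigcap_{j=1}^{h+1} g_j G_{e_i} g_j^{-1}$. Since $G_{e_i}$ has height at most $h$ in $G$, the definition of finite height forces this intersection to be finite, which establishes the claim. The only subtle point, and the main thing to check carefully, is the identification of distinct edges in a single $G$-orbit with distinct cosets of the corresponding edge group, but this is standard Bass--Serre theory once one observes that the stabilizer of $\tilde e_i$ is exactly $G_{e_i}$ and not a proper overgroup. In settings where the stronger Bowditch-style acylindricity (a uniform bound on the order of the stabilizer, not merely finiteness) is required, one can further observe that any finite subgroup of $G$ fixes a vertex of $T$ by Serre's fixed-point lemma and hence lies in a conjugate of some vertex group; a uniform bound then follows whenever $G$ has only finitely many conjugacy classes of finite subgroups, e.g.\ when $G$ is hyperbolic, which is the situation in which the lemma is applied.
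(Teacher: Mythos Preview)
Your argument is correct and is essentially the same pigeonhole argument the authors had in mind: the paper omits the proof in the final text, but the commented-out sketch in the source takes exactly your route---bound the heights by some $l$, observe that a long enough geodesic must contain more than $l$ edges in a single $G$-orbit, and conclude that the pointwise stabilizer lies in an intersection of more than $l$ distinct conjugates of a single edge group, hence is finite. Your explicit constant $L=mh+1$ and the coset/stabilizer identification just make this precise, and your closing remark on upgrading to a uniform cardinality bound is a nice addition but not needed for the paper's applications.
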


{\em Proof of Corollary \ref{graph qc}.}
(1) follows from the results in \cite[2.15, p.25]{basscovering} since the natural homomorphisms $K\to G_i$, $i=1,2$, are injective.

(2) We know that $G$ is a hyperbolic group and all vertex groups are uniformly quasiconvex in $G$ and also
$G_1$ is quasiconvex in $G$ by Theorem \ref{edge qc}. It follows that all the vertex groups of $G$ 
have finite height by Theorem \ref{finite height of qc}. Hence, all the vertex groups of $G_2$ have finite height in $G_2$
as well. Hence, $G_2$-action on its Bass-Serre tree is acylindrical by Lemma \ref{finite height implies acyl}.
Thus by \cite[Theorem 3.7]{ilyakapovichcomb} (or by the Martin's theorem) $G_2$ is hyperbolic.

Finally, by \cite[Theorem 8.73]{kapovich-sardar}, the group $K$ is quasiconvex in $G_1$.
By Theorem \ref{edge qc} $G_1$ is quasiconvex in $G$. Hence $K$ is quasiconvex
in $G$. Thus it easily follows that $G_2$ is quasiconvex in $G$. 

(3) Once again by \cite[Theorem 8.73]{kapovich-sardar}
$G'$ is quasiconvex in $G_2$. Thus using (2) $G'$ is quasiconvex in $G$. \qed

\begin{comment}
	Since $(\GG,Y)$ is a finite graph of groups, there exists $l\in\mathbb{N}$ such that the height of each edge group is at most $l$. Let $T$ be the Bass-Serre tree of $(\GG,Y)$. Note that the $G$-stabilizer of a geodesic $\alpha$ in $T$ is the intersection of the $G$-stabilizers of edges on $\alpha$. Let if possible the action of $G$ on $T$ is not acylindrical. Then, given $k\in\mathbb{N}$ there is a geodesic in $T$ with length bigger than $k$ and the $G$-stabilizer of the geodesic is infinite. Now, choose $k$ to be sufficiently larger than $l$ such that there is a geodesic segment $\beta$ of length bigger than $k$ and  the $G$-stabilizer of $\beta$ is infinite. Since there are finitely many edge groups, the $G$-stabilizer of $\beta$ is contained in the intersection of more than $l$-conjugates of an edge group. This gives a contradiction as the height of each edge group is at most $l$.
\end{comment}
Finally we prove the following application of Corollary \ref{graph qc}.
\begin{prop}\label{virtually special}
	Suppose $(\GG,\YY)$ is a polygon of groups satisfying the hypotheses of Theorem \ref{edge qc}
such that the vertex groups are also virtually compact special. Then $G=\pi_1(\GG,\YY)$ is virtually compact special.
\end{prop}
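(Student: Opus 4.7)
The strategy is to produce a quasiconvex hierarchy for $G$ terminating in trivial groups and then invoke Wise's theorem that a hyperbolic group admitting a quasiconvex hierarchy is virtually compact special.

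By Theorem \ref{edge qc}, $G$ is hyperbolic. By Corollary \ref{graph qc}, $G$ decomposes as $G = G_1 *_K G_2$, where $G_1 = G_v *_{G_e} G_w$ is the fundamental group of a single edge of $\YY$, $G_2 = \pi_1(\GG, \YY_2)$ is the fundamental group of a tree of groups on a connected subgraph of $\partial \YY$, and $G_1, G_2, K$ are all quasiconvex in $G$. Applying Bass--Serre theory to $G_2$ iteratively expresses $G_2$ as an iterated amalgamated product of vertex groups of $(\GG,\YY)$ along edge groups (and their intersections); by Corollary \ref{graph qc}(3), every intermediate subgroup that occurs is quasiconvex in $G$. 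Using the standard fact that in a hyperbolic group a subgroup quasiconvex in the ambient group is quasiconvex in any quasiconvex intermediate subgroup containing it, every splitting in this decomposition is along a subgroup that is quasiconvex in the group being split. We thus obtain a quasiconvex hierarchy of $G$ whose terminal ``leaves'' are the vertex groups $G_v$ of $(\GG,\YY)$.

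By hypothesis each $G_v$ is virtually compact special and hyperbolic, so by Wise's theorem (the characterization of virtually compact special hyperbolic groups as exactly those admitting a quasiconvex hierarchy terminating in trivial groups) each $G_v$ itself admits a quasiconvex hierarchy terminating in trivial groups. Concatenating these hierarchies below the hierarchy constructed above produces a full quasiconvex hierarchy of $G$ terminating in trivial groups. A second application of Wise's theorem, valid because $G$ is hyperbolic, concludes that $G$ is virtually compact special.

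The main point requiring care is verifying that each splitting in the assembled hierarchy is along a quasiconvex subgroup of the group being split, not merely of the ambient $G$. This relies on Corollary \ref{graph qc}(3) together with the transitivity of quasiconvexity in hyperbolic groups. A secondary routine check is that the iterated Bass--Serre decomposition of the tree-of-groups $G_2$ terminates after finitely many steps with the vertex groups of $(\GG,\YY)$ as leaves, which is immediate from the finiteness of $\YY$.
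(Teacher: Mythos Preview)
Your proof is correct, but it takes a somewhat more elaborate route than the paper's. The paper applies Wise's combination theorem \cite[Theorem 13.1]{wisehierarchy} in a single stroke to the splitting $G = G_1 *_K G_2$ from Corollary \ref{graph qc}: since $K$ is quasiconvex in the hyperbolic group $G$ and the factors $G_1, G_2$ are virtually compact special (this being an easy induction from the hypothesis on vertex groups using the same combination theorem), $G$ is virtually compact special. In particular, the paper never needs to pass through a full hierarchy terminating in trivial groups.

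By contrast, you unfold the hierarchy all the way down to trivial groups and invoke the characterisation of virtually compact special hyperbolic groups as precisely those admitting such a hierarchy. This works, but it costs you an extra ingredient: the \emph{converse} direction of Wise's theorem, namely that each virtually compact special hyperbolic vertex group $G_v$ itself admits a quasiconvex hierarchy terminating in trivial groups. The paper's argument avoids this by using only the forward (combination) direction, applied inductively to the tree-of-groups decompositions. What your approach buys is a more explicit picture of the hierarchy; what the paper's approach buys is brevity and a smaller toolkit.
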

\proof Let $\YY_1,\YY_2$, $G_1,G_2,K$ be as in Corollary \ref{graph qc}. 
Then $G=G_1*_K G_2$ is virtually compact special by \cite[Theorem 13.1]{wisehierarchy} since
$K$ is quasiconvex in $G$ by Corollary \ref{graph qc}.
\qed

\subsection{Examples}\label{examples}

We end the paper with two concrete examples which demonstrate the necessity of some of the hypotheses in 
some of the theorems of the previous sections.

 The first example shows that the natural inclusion from the universal cover of the subcomplex of groups 
to the universal cover of the complex of groups is not always a proper embedding. In particular, it shows that 
the converse of Theorem \ref{acyl ct thm} is not true in general. The second example shows that Theorem
\ref{edge qc} is false for triangles of groups. For both the examples following is the common set up:

\begin{enumerate}
\item $\YY$ is a triangle as in figure \ref{pic 4} and $(\GG,\YY)$ is a triangle of groups. 
\item We shall denote the local group for any face $\sigma$ by $G_{\sigma}$; $G_{\tau}=(1)$ for both the examples.
\item For the local homomorphisms we shall use the notation  $\phi_{ij}: G_{e_i}\map G_{v_j}$.
\end{enumerate}

\begin{figure}[h]
	\centering
	\begin{tikzpicture}
		\tikzset{vertex/.style = {shape=circle,draw,minimum size=2em}}
		\tikzset{edge/.style = {->,> = latex'}}
		\node (v) at (0,0) {$v_1$};
		\node (u) at (3,-3) {$v_2$};
		\node (w) at (-3,-3) {$v_3$};
		\node (s) at (0,-1.9) {$\tau$};
		\draw[ ultra thick] (v) to node[midway,above,sloped] {$e_3$} (u) ;
		\draw[ ultra thick] (u) to node[midway,below,sloped] {$e_1$} (w);
		\draw[ ultra thick] (v) to node[midway,above,sloped] {$e_2$} (w);		
	\end{tikzpicture}
	\caption{}
	\label{pic 4}
\end{figure}

\begin{comment}

\begin{example}
	Let $Y$ be a triangle with vertices $v_1,v_2,v_3$, edges $e_1,e_2,e_3$ and let $\tau$ be the face of $Y$, see figure \ref{pic 4}. Suppose $G_{v_1}=\langle a,b|ab=ba\rangle, G_{v_2}=\langle c,d|cd=dc\rangle, G_{v_3}=\langle e,f|ef=fe\rangle$. Suppose all the edge groups $G_{e_1},G_{e_2},G_{e_3}$ are cyclic and the images of monomorphisms from $G_{e_3}$ to $G_{v_1},G_{v_2}$ are $\langle a\rangle$,$\langle c\rangle$ respectively. Similarly, images of monomorphisms for $e_2$ are $\langle e\rangle,\langle b\rangle$ and for $e_1$ are $\langle d\rangle,\langle f\rangle$. Since the face group $G_{\tau}$ is a subgroup of the intersection of edge groups, $G_{\tau}$ is trivial. Note that $\pi_1(\GG,Y)=\langle a,b,d|ab=ba,ad=da,bd=db\rangle$. Let $Y_1=e_3$. Then, $\pi_1(\GG,Y_1)=\langle a,b,d|ab=ba,ad=da\rangle$. Clearly, the natural map $\pi_1(\GG,Y_1)\rightarrow \pi_1(\GG,Y)$ is not injective.
\end{example}
\end{comment}

\begin{example}\label{not proper embedding}
\begin{enumerate}
\item Let $$G_{v_1}=\langle a,b|a^2,b^2\rangle, G_{v_2}=\langle c,d|c^2,d^2\rangle, G_{v_3} =\langle x,y,z|x^2,y^2,z^2\rangle.$$ 
\item Let $$G_{e_i}=\langle t_i| t_i^2\rangle, i=1,2,3.$$ 
The local homomorphisms are defined as follows.
	
\item  
\[
\begin{array}{cc}
\phi_{31}:G_{e_3}\map G_{v_1}, & \phi_{32}:G_{e_3}\map G_{v_2}\\
\hspace{0.5cm} t_3\mapsto a & \hspace{0.5cm} t_3\mapsto c
\end{array}
\]

\item 
\[
\begin{array}{cc}
\phi_{12}:G_{e_1}\map G_{v_2}, & \phi_{13}:G_{e_1}\map G_{v_3}\\
\hspace{0.5cm} t_1\mapsto d & \hspace{0.5cm} t_1\mapsto y
\end{array}
\]

\item 
\[
\begin{array}{cc}
\phi_{21}:G_{e_2}\map G_{v_1} & \phi_{23}:G_{e_2}\map G_{v_3}\\
\hspace{0.5cm} t_2\map b & \hspace{0.5cm} t_2\mapsto x
\end{array}
\]

\end{enumerate}  
Note that $(\GG,\YY)$ is developable, $G=\pi_1(\GG,\YY)\simeq \langle a,b,d,z|a^2,b^2,d^2,z^2\rangle$ is hyperbolic and that
$G_{v_i}$'s are all quasiconvex in $\pi_1(\GG,\YY)$. Thus, the universal cover $B$ of $(\GG,\YY)$ is 
a hyperbolic space by Proposition \ref{mj-dahmani}. Let $\YY_1=e_3$ and let $(\GG,\YY_1)$ be the restriction of $(\GG,\YY)$ to $\YY_1$. 
Then $\pi_1(\GG,\YY_1)=G_{v_1} *_{G_{e_3}} G_{v_2}$ is quasiconvex in $\pi_1(\GG,\YY)$. The universal cover $B_1$ of 
$(\GG,\YY_1)$ is the Bass-Serre tree of $\pi_1(\GG,\YY_1)$. Consider the two vertices $v=G_{v_1}$ and $w_n=dbdb...db$ $(n-times)G_{v_2}$ 
of $B_1$. Note that $d_{B_1}(v,w_n)=n+1$. On the other hand, by construction of $B$ \textup{\cite[Theorem 2.13,III.C]{bridson-haefliger}}, one sees that $d_B(v,w_n)=2$. Hence, the natural map $B_1\rightarrow B$ is not a proper embedding and clearly there is no Cannon-Thurston map 
for the inclusion $B_1\map B$.
\end{example}

The next example shows that the conclusion of Theorem \ref{edge qc} is false for a triangle of groups. For the definition of hyperbolic automorphism of a free group, one is referred to \cite{BF}.

\begin{example}\label{not qc edge}
\begin{enumerate}
\item Suppose $\mathbb F=\langle x,y,z\rangle$ is a free group on $3$ generators and $\phi$ is a hyperbolic automorphism 
(see \cite{BF}) of $\mathbb F$.
Let $G_{v_1}$ be the semidirect product $\mathbb Z \ltimes \mathbb F$ where we use letter $t$ to denote a generator of $\mathbb Z$, so that 
$$G_{v_1}=\langle x,y,z,t| twt^{-1}=\phi(w), w=x,y,z\rangle.$$ We note that $G_{v_1}$ is a hyperbolic group by the main result of \cite{BF}.
	
\item Suppose $$G_{v_3}=\langle a,b,c\rangle, G_{v_2}=\langle d,e\rangle$$ are free groups of rank $3$ and $2$ respectively.

\item Suppose $$G_{e_i}=\langle t_i \rangle, i=1,  3$$ are infinite cyclic. 
\item Suppose $$G_{e_2}=\langle u_1,u_2 \rangle $$ is a free group on two generators.
\item Suppose the local homomorphisms are defined as follows:
$$\phi_{21}: G_{e_2}\map G_{v_1}, u_1\mapsto a,u_2\mapsto b;$$
$$ \phi_{23}: G_{e_2}\map G_{v_3}, u_1\mapsto z, u_2\mapsto x;$$

\[
\begin{array}{cc}
\phi_{12}:G_{e_1}\map G_{v_2}, &  \phi_{13}: G_{e_1}\map G_{v_3}\\
\hspace{0.5cm} t_1\mapsto c & \hspace{0.5cm} t_1\mapsto d
\end{array} 
\]

\[
\begin{array}{cc}
\phi_{31}:G_{e_3}\map G_{v_1}, &  \phi_{32}:G_{e_3}\map G_{v_2}\\
\hspace{0.5cm} t_3\mapsto e &  \hspace{0.5cm} t_3\mapsto y
\end{array}
\]
\end{enumerate}    
We note that $\pi_1(\GG,\YY)$ can be naturally identified with $G_{v_1}\ast \mathbb Z$. 
Let $\YY_1=e_1$ and let $(\GG,\YY_1)$ be the restriction of $(\GG,\YY)$. 
Then $\pi_1(\GG,\YY_1)=\mathbb F\ast \mathbb Z$ where $\mathbb F$ is the normal free subgroup of $G_{v_1}$ of rank $3$.
Clearly, $\pi_1(\GG,\YY_1)$ is not a quasiconvex subgroup of $\pi_1(\GG,\YY)$.
\end{example}

\begin{comment}
	
Next, we give an example of a developable triangle of groups such that the universal cover is not a CAT(0) space.
\begin{example}
	Let $Y$ be a Euclidean equilateral triangle as in figure \ref{pic 4}. Define a triangle of groups $(\GG,Y)$ in the following way:
	
	Assume that $G_{v_1}=\langle a,b,g|a^2,b^2,g^2,ab=ba,ag=ga,bg=gb\rangle$, $G_{v_2}=\langle c,d|c^2,d^2\\,cd=dc\rangle$, $G_{v_3} =\langle e,f,h|e^2,f^2,h^2,ef=fe,eh=he,fh=hf\rangle$. Assume that all the edge groups are of order $2$. Let the images of monomorphisms from $G_{e_3}$ to $G_{v_1},G_{v_2}$ be $\langle a \rangle,\langle c \rangle$. Similarly, images of monomorphisms for edge $e_1$ are $\langle d\rangle,\langle f\rangle$ and for $e_2$ are $\langle e\rangle,\langle b\rangle$. The face group $G_{\tau}$ is trivial. Note that Gersten-Stallings's angle \cite{gerstein-stalling} at each vertex is $\dfrac{\pi}{2}$. Thus, this triangle of groups is not non-positively curved in the sense of Gersten-Stallings \cite{gerstein-stalling}. Now, the universal cover $B$ of $(\GG,Y)$ is a $M_{\kappa}^2$ complex for $\kappa= 0$. By our choice of local groups, we have an injective loop of length $4$ in the link of a vertex of $B$ . Thus, the development does not satisfy the link condition by \cite[Lemma 5.6, II.5]{bridson-haefliger}. Hence, the development is not a CAT(0) space.
\end{example}
\end{comment}

\bibliography{acyl-cplx}
\bibliographystyle{alpha}

\end{document}